\numberwithin{equation}{section}
\newtheorem{thm}{Theorem}[section]
\newtheorem{lem}[thm]{Lemma}
\newtheorem{prop}[thm]{Proposition}
\newtheorem{cor}[thm]{Corollary}
\theoremstyle{definition}
\newtheorem{exam}[thm]{Example}
\newtheorem{defn}[thm]{Definition}
\newtheorem{question}[thm]{Question}
\newtheorem{remark}[thm]{Remark}
\crefname{lem}{Lemma}{Lemmas}
\crefname{thm}{Theorem}{Theorems}
\crefname{prop}{Proposition}{Propositions}
\crefname{question}{Question}{Questions}
\crefname{defn}{Definition}{Definitions}
\crefname{conj}{Conjecture}{Conjectures}
\crefname{figure}{Figure}{Figures}
\crefname{cor}{Corollary}{Corollaries} 
\newcommand\SHE{\operatorname{SHE}}
\newcommand\HE{\operatorname{HE}}
\DeclareMathOperator*\conv{\scalebox{1.5}{\raisebox{-0.2ex}{\(\ast\)}}}
\newcommand\loops{\ell}
\newcommand\TC{\sfC^{\mathsf{top}}}
\newcommand\Grape{\mathcal{G}\mathsf{rape}}
\newcommand\Graph{\mathcal{G}\mathsf{raph}}
\newcommand\Emb{\mathcal{E}\mathsf{mb}}
\newcommand\ess{\mathsf{ess}}
\newcommand\bbF{\mathbb{F}}
\newcommand\bbR{\mathbb{R}}
\newcommand\bbS{\mathbb{S}}
\newcommand\bbZ{\mathbb{Z}}
\newcommand\bft{\mathbf{t}}
\newcommand\bfx{\mathbf{x}}
\newcommand\sfC{\mathsf{C}}
\newcommand\sfT{\mathsf{T}}
\newcommand\sfa{\mathsf{a}}
\newcommand\sfe{\mathsf{e}}
\newcommand\sfg{\mathsf{g}}
\newcommand\sfh{\mathsf{h}}
\newcommand\sfv{\mathsf{v}}
\newcommand\sfw{\mathsf{w}}
\newcommand\cP{\mathcal{P}}
\newcommand\cR{\mathcal{R}}
\newcommand\cD{\mathcal{D}}
\renewcommand\vec[1]{\mathbf{#1}}
\newcommand\norm[1]{\lVert #1 \rVert}
\newcommand{\grape}[2][0]{
\draw[thick, fill] (#2.center) circle(2pt) -- +({30+#1}:0.5) +(0,0) -- +({-30+#1}:0.5);
\draw[thick] (#2.center) +({30+#1}:0.5) arc ({120+#1}:{-120+#1}:{0.5 / sqrt(3)});
}
\title[Hilbert polynomials of configuration spaces]{Hilbert polynomials of configuration spaces over graphs of circumference at most 1}
\author{Byung Hee An}
\address{Department of Mathematics Education, Kyungpook National University, Daegu, 41566, South Korea}
\email{anbyhee@knu.ac.kr}
\author{Jang Soo Kim}
\address{Department of Mathematics,
Sungkyunkwan University (SKKU), Suwon, Gyeonggi-do 16419, South Korea}
\email{jangsookim@skku.edu}
\keywords{Configuration spaces, Circumference of a graph, Hilbert polynomials}
\subjclass[2020]{Primary: 20F36, 55R80; Secondary: 05C10, 13E15}
\begin{document}

\begin{abstract}
The \( k \)-configuration space \( B_k\Gamma \) of a topological space \( \Gamma \) is the space of sets of \( k \) distinct points in \( \Gamma \). In this paper, we consider the case where \( \Gamma \) is a graph of circumference at most \(1\). We show that for all \( k\ge0 \), the \( i \)-th Betti number of \( B_k\Gamma \) is given by a polynomial \(P_\Gamma^i(k)\) in \( k \), called the Hilbert polynomial of \( \Gamma \). We find an expression for the Hilbert polynomial \(P_\Gamma^i(k)\) in terms of those coming from the canonical \(1\)-bridge decomposition of \( \Gamma \). We also give a combinatorial description of the coefficients of \(P_\Gamma^i(k)\).
\end{abstract}

\maketitle

\section{Introduction}
\label{sec:introduction}

Given a graph \(\Gamma=(V,E)\), the (\emph{unordered})
\emph{\(k\)-configuration space} \(B_k\Gamma\) on \(\Gamma\) is the
topological space consisting of sets of \(k\) distinct points in
\(\Gamma\):
\[
B_k\Gamma = \left\{
\{x_1,\dots, x_k\}\subseteq \Gamma: x_i\neq x_j \mbox{ for all \( i\ne j \)}
\right\}.
\]
The space \(B_k\Gamma\) was firstly introduced by Ghrist
\cite{Ghrist2001}, and its fundamental group is called the \emph{graph
  braid group}.

Graph braid groups and their homology groups have been studied by many
researchers. Abrams \cite{Abrams2000} introduced a combinatorial model
for their configuration spaces and showed their \(K(\pi,1)\)- and
\(CAT(0)\)-ness. Farley and Sabalka \cite{FS2005,FS2008,Sabalka2009}
applied Forman's discrete Morse theory to the combinatorial model to
provide a way to compute group presentations, and analyze the
cohomology ring of graph braid groups. Kim, Ko, La, and Park
\cite{KP2012,KKP2012,KLP2016} characterized various group-theoretic
properties of graph braid groups and their first homology groups.

Rather recently, in \cite{ADCK2020,ADCK2022}, Knudsen, Drummond-Cole,
and the first author introduced a continuous edge action on the total
configuration space \(B\Gamma=\coprod_{k\ge0}B_k\Gamma\), which
has two important consequences: for a fixed field \(\bbF\) and
each \(i\ge 0\),
\begin{enumerate}
\item[(i)] the homology group \(H_i(B\Gamma)\) over \(\bbF\)
is finitely generated as an \(\bbF[E]\)-module, and 
\item[(ii)] the \(i\)-th
Betti number \(\dim_\bbF H_i(B_k\Gamma)\) is equal to a polynomial
\(P_\Gamma^i(k)\) in \( k \) for sufficiently large \(k\).
\end{enumerate}
Moreover, they found a combinatorial description for the leading term
of \(P_\Gamma^i(k)\).

\subsection{Motivating question and main results}
The motivating question of this paper is as follows:
\begin{question}\label{que:1}
  Given a fixed \(i\ge 0\), at what value of \(k\) does the
  polynomiality begin? In other words, what is the minimal \(k_0\)
  such that \(\dim_\bbF H_i(B_k\Gamma) = P^i_\Gamma(k)\) for all
  \(k\ge k_0\)? Equivalently, in terms of homological algebra, what is
  the \emph{regularity} of \(H_i(B\Gamma)\) as an \(\bbF[E]\)-module?
\end{question}

There are some known results regarding this question. Let \(\Gamma\)
be a connected graph. Then it is easy to observe that the \(0\)-th
Betti number stabilizes to the constant \(1\) from the beginning,
namely, \(k_0=0\) if \(\Gamma\) is not a singleton, and it stabilizes
to the constant \(0\) from \(k_0=2\) if \( \Gamma \) is a singleton.

According to the result of Ko and Park \cite{KP2012}, the first Betti
number of \( B_k\Gamma \) over a field \(\bbF\) of characteristic
\(0\) is given as follows:
\[
\dim_\bbF(H_1(B_k\Gamma)) = 
\begin{cases}
0 & k=0;\\
\dim_\bbF(H_1(\Gamma)) & k=1;\\
N_1(\Gamma,k)+N_2(\Gamma)+N_3(\Gamma)+\dim_\bbF(H_1(\Gamma)) & k\ge2.
\end{cases}
\]
Here, the quantities \(N_1(\Gamma,k)\), \(N_2(\Gamma)\), and
\(N_3(\Gamma)\) count the contributions of vertex-\(1\)-cuts,
vertex-\(2\)-cuts, and triconnected components. One may further
observe that \(N_1(\Gamma,k)\) is a polynomial in \(k\) with
\(N_1(\Gamma,1)=0\). Hence, \(k_0=2\) if \(N_2(\Gamma)\neq0\) or
\(N_3(\Gamma)\neq 0\), namely, if \(\Gamma\) has a nontrivial
vertex-\(2\)-cut or a triconnected subgraph. A similar criterion can
also be made for \(k_0=0\) and \(k_0=1\).

While we have a complete answer \Cref{que:1} for \(i\le 1\), only a
few results are known for \(i\ge 2\). If \(\Gamma\) has exactly two
essential vertices, then all homology groups \(H_*(B\Gamma)\) of
degree \(3\) or higher vanish. Here, an \emph{essential vertex} is a
vertex of degree at least \( 3 \). Therefore, by using Euler
characteristics and the above observations on lower degrees, one may
compute the polynomial \(P_\Gamma^2(k)\) explicitly and conclude that
the value of \(k_0\) for \(i=2\) is the same as that for \(i=1\). In
general, there is a computational result only for graphs with at most
\(9\) edges in \cite{GCDC2019}, which provides Betti numbers, stable
ranges, and stable polynomials with visualized graphs.

In this paper, we consider a family of graphs whose circumferences are
at most \(1\). Here, the \emph{circumference} of a graph \(\Gamma\) is
the maximal edge length of simple cycles in \(\Gamma\). We call such
graphs \emph{bunches of grapes} due to their shapes: any bunch of
grapes \(\Gamma\) can be obtained by attaching \(\ell(\sfv)\) loops,
called \emph{grapes}, to each vertex \(\sfv\) of a tree \(\sfT\)
called a \emph{stem}. See \Cref{figure:example of a bunch of grapes}.
In this case, we simply write \(\Gamma\cong(\sfT,\ell)\). Then one can
apply Ko--Park's result on the first homology groups and conclude that
the first Betti number stabilizes from the beginning.

A useful property of a bunch of grapes \(\Gamma\) is that any edge in
the stem decomposes \(\Gamma\) into two subgraphs, which are again
bunches of grapes. Therefore, any bunch of grapes can be built from
bunches of grapes \(\Gamma_\sfv\) having exactly one essential vertex
\(\sfv\). Such a graph \(\Gamma_\sfv\) is called an \emph{elementary
  bunch of grapes}, which is completely determined by the numbers of
loops and leaves, called the \emph{local data} of \(\sfv\).

The main result of this paper is as follows: 
\begin{thm}[\Cref{theorem:decomposition formula}]
  Let \(\Gamma\cong(\sfT,\loops)\) be a bunch of grapes, and let
  \(P_{\Gamma}^i(k)\) and \(P_{\Gamma_\sfv}^j(k)\) be the Hilbert
  polynomials of \(H_i(B_k\Gamma)\) and \(H_j(B_k\Gamma_{\sfv})\),
  respectively. Then for each \(i\ge 1\) and \(k\ge 0\),
\[
\dim_\bbF H_i(B_k\Gamma)=P^i_\Gamma(k) = \sum_{\substack{W\subseteq V^{\ess}(\Gamma)\\|W|=i}} \left(\conv_{\sfv\in W} P_{\Gamma_\sfv}^1\right)(k),
\]
where 
\[
(p_1*p_2)(k)=\sum_{j=0}^{d_1+d_2}\sum_{\substack{j_1+j_2=j\\j_1,j_2\ge 0}}b_{1,j_1}b_{2,j_2}\binom k j,
\]
for polynomials
\(p_1(k)=\sum_{j=0}^{d_1} b_{1,j}\binom k j\) and \(p_2(k)=\sum_{j=0}^{d_2} b_{2,j}\binom k j\).
\end{thm}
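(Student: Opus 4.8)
The plan is to induct on the number $n=|V^\ess(\Gamma)|$ of essential vertices, cutting $\Gamma$ along a single stem edge at each step. If $n\ge 2$, I would choose a stem edge $e$ lying on the geodesic between two essential vertices and cut $\Gamma$ at the midpoint $\sfm$ of $e$; this expresses $\Gamma$ as a wedge $\Gamma=\Gamma_1\vee_\sfm\Gamma_2$ of two strictly smaller bunches of grapes, in which $\sfm$ becomes a pendant leaf of one edge in each factor. Because $\sfm$ is bivalent, the local data of every essential vertex is unchanged, so $V^\ess(\Gamma)=V^\ess(\Gamma_1)\sqcup V^\ess(\Gamma_2)$ and each elementary bunch $\Gamma_\sfv$ is the same whether formed inside $\Gamma$ or inside the factor containing $\sfv$. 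The base case $n\le 1$ is the elementary bunch, for which $H_i(B\Gamma_\sfv)=0$ for all $i\ge 2$ (a single essential vertex bounds the homological dimension); the degree $i=0$ is the constant polynomial $1$, handled separately. Everything then reduces to a two-factor gluing formula
\begin{equation*}
P^i_\Gamma \;=\; \sum_{i_1+i_2=i} P^{i_1}_{\Gamma_1}*P^{i_2}_{\Gamma_2}. \tag{$\star$}
\end{equation*}

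The heart of the argument is $(\star)$, which I would prove using the continuous edge action of \cite{ADCK2020,ADCK2022}. Let $U$ be the polynomial generator of $\bbF[E]$ attached to the bridge edge $e$; after cutting, $U$ specializes to the stabilization operator of the pendant edge created at $\sfm$ in each factor, making both $H_*(B\Gamma_1)$ and $H_*(B\Gamma_2)$ into graded $\bbF[U]$-modules. On the chain level (the \'Swi\k{a}tkowski complex), splitting $e$ at $\sfm$ realizes $H_*(B\Gamma)$ as the tensor product of the two factor homologies over the bridge algebra (a priori a derived tensor product),
\begin{equation*}
H_*(B\Gamma)\;\cong\; H_*(B\Gamma_1)\otimes_{\bbF[U]}H_*(B\Gamma_2),
\end{equation*}
the point being that a point inserted on $e$ may be pushed to either side of $\sfm$, which is exactly the coequalizer identifying the two pendant actions. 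I would then translate this into Hilbert polynomials. Writing $P(k)=\sum_j b_j\binom kj$ and $\hat P(t)=\sum_j b_j t^j$, the assignment $P\mapsto\hat P$ is a linear isomorphism under which $*$ becomes ordinary multiplication of polynomials in $t$; correspondingly the ordinary generating function is $\sum_k P(k)x^k=\frac1{1-x}\hat P\!\big(\tfrac{x}{1-x}\big)$, using $\sum_k\binom kj x^k=\frac{x^j}{(1-x)^{j+1}}$. A disjoint union multiplies ordinary generating functions, i.e.\ realizes $\otimes_\bbF$, whereas tensoring over $\bbF[U]$ (whose Hilbert series is $\frac1{1-x}$) removes one such factor; the net effect on Hilbert series is multiplication by $(1-x)$, which is precisely the passage from the ordinary convolution to the binomial convolution $*$. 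The interval sanity check ($B_kI$ contractible, $\dim_\bbF H_0(B_kI)=1$) confirms the bookkeeping: two half-intervals glued at $\sfm$ give $(k+1)-k=1$, the finite difference built into $*$.

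It remains to assemble the induction. An elementary computation shows that $*$ is commutative and associative with unit the constant polynomial $1=\binom k0$, and that the empty convolution equals $1$; in particular $1*p=p$, which is what lets an \emph{inactive} essential vertex contribute trivially. Substituting the inductive formulas for $\Gamma_1$ and $\Gamma_2$ into $(\star)$ and using associativity, the sum over pairs $(i_1,i_2)$ and subsets $W_1\subseteq V^\ess(\Gamma_1)$, $W_2\subseteq V^\ess(\Gamma_2)$ with $|W_1|+|W_2|=i$ collapses, under $W=W_1\sqcup W_2$, to a single sum over all $i$-element subsets $W\subseteq V^\ess(\Gamma)$ of $\conv_{\sfv\in W}P^1_{\Gamma_\sfv}$, as claimed. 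Since each elementary polynomial $P^1_{\Gamma_\sfv}$ is an \emph{exact} polynomial identity for $\dim_\bbF H_1(B_k\Gamma_\sfv)$ for all $k\ge 0$, and $*$ preserves exactness, the resulting identity holds for every $k\ge 0$, not merely asymptotically---recovering $k_0=0$ for $i\ge 1$.

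The main obstacle is the isomorphism underlying $(\star)$, and specifically the vanishing of the higher $\operatorname{Tor}$ terms in the Künneth spectral sequence over $\bbF[U]$. Exactness degree-by-degree requires that $H_*(B\Gamma_i)$ be \emph{flat}---equivalently, torsion-free---over the bridge algebra $\bbF[U]$, i.e.\ that stabilization along the pendant edge at $\sfm$ be injective; otherwise the derived tensor product would introduce spurious classes one homological degree higher and destroy the clean sum over $W$. This injectivity is exactly where the hypothesis that $\Gamma$ has circumference at most $1$ enters: the absence of cycles longer than a loop prevents the torsion in the edge action that occurs for general graphs. Verifying this flatness (say, via an explicit basis of the \'Swi\k{a}tkowski homology adapted to the bridge, or a direct injectivity argument for pendant-edge stabilization) is the step I expect to require the most care; once it is in hand, the grading bookkeeping that produces $*$ rather than the ordinary convolution, and the collapse of the double sum, are straightforward.
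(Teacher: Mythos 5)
Your proposal follows essentially the same route as the paper: induct on the number of essential vertices, cut along a stem edge joining two essential vertices, use injectivity of edge stabilization to get flatness over \(\bbF[\sfe]\) and hence the K\"unneth-type isomorphism (\Cref{corollary:1-bridge}), convert to the binomial convolution \(*\) via generating functions (\Cref{lemma:product}, \Cref{proposition:1-bridge}), and collapse the double sum over \((W_1,W_2)\) by associativity and distributivity of \(*\). One correction to your closing paragraph: the injectivity of pendant-edge stabilization is not where the circumference hypothesis enters---it holds for every edge of every graph (\Cref{proposition:free action of leaves}, quoted from \cite{ADCK2019}), so there is no torsion to rule out even for general graphs; the circumference-at-most-one hypothesis is instead what guarantees that repeated one-bridge decomposition terminates in elementary pieces with a single essential vertex, whose Hilbert polynomials are known exactly and agree with the Betti numbers for all \(k\ge0\) (vanishing residual polynomial, \Cref{prop:elementary}).
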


Furthermore, for a bunch of grapes \(\Gamma\), the sequence of Hilbert
polynomials \((P_\Gamma^0(k), P_\Gamma^1(k),\dots)\) determines the
local data \(\cD(\Gamma)\) of \(\Gamma\) consisting of the local data
of all essential vertices of \(\Gamma\).

\begin{thm}[\Cref{thm:local data}]
For two bunches of grapes \(\Gamma_1\) and \(\Gamma_2\),  
\[
\cD(\Gamma_1)=\cD(\Gamma_2) \Longleftrightarrow 
P_{\Gamma_1}^i(k)\equiv P_{\Gamma_2}^i(k) \quad\mbox{ for all } i\ge 0.
\]
\end{thm}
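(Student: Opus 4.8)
The plan is to turn the convolution in \Cref{theorem:decomposition formula} into ordinary multiplication, package the Hilbert polynomials into a bivariate generating function, and recover the local data by unique factorization. To this end I would introduce the linear isomorphism $\phi\colon\bbQ[k]\to\bbQ[x]$ determined by $\phi\bigl(\binom{k}{j}\bigr)=x^j$. The formula defining $*$ shows at once that $\phi(p_1*p_2)=\phi(p_1)\,\phi(p_2)$, so $\phi$ is an isomorphism of rings $(\bbQ[k],*)\xrightarrow{\ \sim\ }(\bbQ[x],\cdot)$ carrying the $*$-unit $\binom{k}{0}$ to $1$. Writing $Q_\sfv:=\phi\bigl(P^1_{\Gamma_\sfv}\bigr)$, \Cref{theorem:decomposition formula} becomes $\phi\bigl(P^i_\Gamma\bigr)=e_i\bigl(\{Q_\sfv\}_{\sfv\in V^{\ess}(\Gamma)}\bigr)$, the ordinary $i$-th elementary symmetric polynomial in the $Q_\sfv$ (with $i=0$ giving $e_0=1=\phi(P^0_\Gamma)$). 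The forward implication is then immediate: since an elementary bunch of grapes is determined by its local data, $\cD(\Gamma_1)=\cD(\Gamma_2)$ forces the multisets $\{Q_\sfv\}$ to coincide, hence all $e_i$ and thus all $\phi(P^i_\Gamma)$ coincide, and applying $\phi^{-1}$ gives $P^i_{\Gamma_1}\equiv P^i_{\Gamma_2}$ for every $i$.

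For the converse I would assemble the bivariate generating function $F_\Gamma(x,t):=\sum_{i\ge 0}\phi\bigl(P^i_\Gamma\bigr)(x)\,t^i$, which by the elementary-symmetric identity factors as
\[
F_\Gamma(x,t)=\prod_{\sfv\in V^{\ess}(\Gamma)}\bigl(1+Q_\sfv(x)\,t\bigr).
\]
If $P^i_{\Gamma_1}\equiv P^i_{\Gamma_2}$ for all $i$, then $F_{\Gamma_1}=F_{\Gamma_2}$. Viewing this common polynomial in the variable $t$ over the field $\bbQ(x)$, each factor $1+Q_\sfv t$ contributes the root $-Q_\sfv^{-1}$; this makes sense because an essential vertex satisfies $2\loops_\sfv+m_\sfv\ge 3$, so $\loops_\sfv+m_\sfv\ge 2$ and $P^1_{\Gamma_\sfv}$ has positive degree, whence $Q_\sfv\ne 0$. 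Since the multiset of roots of a polynomial is determined by the polynomial, and $Q\mapsto -Q^{-1}$ is injective on $\bbQ(x)^\times$, the multisets $\{Q_\sfv\}$ agree for $\Gamma_1$ and $\Gamma_2$; applying $\phi^{-1}$, so do the multisets $\bigl\{P^1_{\Gamma_\sfv}\bigr\}$.

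It then remains to recover $(\loops_\sfv,m_\sfv)$ from the single polynomial $P^1_{\Gamma_\sfv}$, that is, to show the assignment (local data) $\mapsto P^1_{\Gamma_\sfv}$ is injective; two readings suffice. First, \Cref{theorem:decomposition formula} at $k=1$ gives $P^1_{\Gamma_\sfv}(1)=\dim_\bbF H_1(B_1\Gamma_\sfv)=\dim_\bbF H_1(\Gamma_\sfv)=\loops_\sfv$, since $\Gamma_\sfv$ has $\loops_\sfv+m_\sfv$ edges and $1+m_\sfv$ vertices. Second, as $\Gamma_\sfv$ has a single essential vertex we have $H_{\ge 2}(B\Gamma_\sfv)=0$, so $\chi(B_k\Gamma_\sfv)=1-P^1_{\Gamma_\sfv}(k)$, and Gal's Euler-characteristic formula yields
\[
\sum_{k\ge 0}\bigl(1-P^1_{\Gamma_\sfv}(k)\bigr)t^k=\frac{1-(2\loops_\sfv+m_\sfv-1)t}{(1-t)^{\loops_\sfv+m_\sfv}};
\]
its pole at $t=1$ has order exactly $\loops_\sfv+m_\sfv$ (the numerator does not vanish there, as $2\loops_\sfv+m_\sfv\ge 3$), so $\deg P^1_{\Gamma_\sfv}=\loops_\sfv+m_\sfv-1$. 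Reading off $\loops_\sfv=P^1_{\Gamma_\sfv}(1)$ and $\loops_\sfv+m_\sfv=\deg P^1_{\Gamma_\sfv}+1$ recovers $(\loops_\sfv,m_\sfv)$, and doing this termwise over the common multiset $\bigl\{P^1_{\Gamma_\sfv}\bigr\}$ gives $\cD(\Gamma_1)=\cD(\Gamma_2)$.

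The main obstacle is the converse direction. The decisive step is the transform $\phi$, which turns $*$ into ordinary multiplication and exposes the product structure of $F_\Gamma$; after that the unique-factorization argument is routine. The genuinely graph-theoretic content is the injectivity of the last paragraph, whose clean form depends on an explicit handle on $P^1_{\Gamma_\sfv}$: if the generating function displayed above is not already in hand from the paper's computation of the coefficients of $P^1_{\Gamma_\sfv}$, then establishing it — or producing any two invariants that separate the local data — is where the real work lies.
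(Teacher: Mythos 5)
Your proof is correct, and it reorganizes the argument in a way that differs meaningfully from the paper's. Both proofs rest on the same structural fact from \Cref{theorem:decomposition formula}, namely that the \(P^i_\Gamma\) are the elementary symmetric functions (under \(*\)) of the multiset \(\{P^1_{\Gamma_\sfv}\}\); but the recovery step is carried out differently. The paper works only with degrees and \(b\)-leading coefficients: it stratifies the essential vertices by \(\deg P^1_{\Gamma_\sfv}\), uses the top-degree coefficients of \(P^1_\Gamma,\dots,P^{n_1}_\Gamma\) as elementary symmetric functions of the numbers \(\loops(\sfv_j)-1+d_1\) to solve for the top stratum, subtracts it off, and iterates. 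You instead make the isomorphism \((\bbQ[k],*)\cong(\bbQ[x],\cdot)\) explicit via \(\phi(\binom{k}{j})=x^j\), package everything into \(F_\Gamma(x,t)=\prod_\sfv(1+Q_\sfv(x)t)\), and recover the whole multiset \(\{Q_\sfv\}\) in one stroke as the (inverted, negated) roots of a polynomial in \(t\) over \(\bbQ(x)\); this avoids the iteration entirely and uses the full polynomials \(P^1_{\Gamma_\sfv}\) rather than just their leading data. The final step—reading \(\loops_\sfv=P^1_{\Gamma_\sfv}(1)\) and \(\loops_\sfv+m_\sfv=\deg P^1_{\Gamma_\sfv}+1\)—is sound, and the two facts you need are already available from \eqref{eq:first homology}: one checks directly that \(N_{\ell,m}(1)=\ell\) and that \(N_{\ell,m}\) has degree \(\ell+m-1\) with positive leading coefficient \((2\ell+m-2)/(\ell+m-1)!\), so the detour through the Euler-characteristic generating function (which does check out against \eqref{eq:first homology}, but is imported from outside the paper) is not actually needed. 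The only caveat, shared with the paper's own statement, is that the argument presupposes nontrivial bunches of grapes, since \Cref{theorem:decomposition formula} is stated under that hypothesis.
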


While the leading term of \(P_\Gamma^i(k)\) for any graph \(\Gamma\)
has a combinatorial description, little is known about the other
coefficients, even for \(P_\Gamma^1(k)\), i.e., for the first homology
group. In this paper, we use an expansion of a polynomial with respect
to the binomial coefficients \(\left\{\binom k j : j\ge 0\right\}\)
instead of the usual basis \(\{k^j : j\ge 0\}\). Then one may ask what
is the combinatorial meaning of the coefficient of \(P_\Gamma^i(k)\).
That is, what does \(b_{\Gamma;j}^i\) count if
\(P_\Gamma^i(k)=\sum_{j\ge 0} b_\Gamma^i \binom k j\)? We provide a
complete answer to this question in \Cref{thm:1} when \( \Gamma \) is
a bunch of graphs. Moreover, we have an explicit correspondence
between our combinatorial models and certain homology cycles that form
a basis of \(H_i(B_k\Gamma)\).

\begin{thm}[\Cref{thm:combinatorial description for homology cycles}]
  Let \(\Gamma\cong(\sfT,\loops)\) be a bunch of grapes such that
  \(\sfT\) has at least one edge. Then for each \(i\ge 0\) and
  \(k\ge 0\), there is an injective map
\[
\alpha:\HE^i_\Gamma(k)\to H_i(B_k\Gamma)
\]
whose image forms an \(\bbF\)-basis for \(H_i(B_k\Gamma)\).
\end{thm}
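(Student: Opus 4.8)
The plan is to reduce the theorem to two independent facts: the cardinality identity \(\lvert\HE^i_\Gamma(k)\rvert = \dim_\bbF H_i(B_k\Gamma)\), and the \(\bbF\)-linear independence of the family \(\{\alpha(x) : x\in\HE^i_\Gamma(k)\}\) inside \(H_i(B_k\Gamma)\). Granting both, the image of \(\alpha\) consists of \(\dim_\bbF H_i(B_k\Gamma)\) linearly independent classes and is therefore a basis; injectivity of \(\alpha\) is then automatic. So the genuine work is to build \(\alpha\) explicitly and to establish independence.

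First I would settle the cardinality. By \Cref{theorem:decomposition formula},
\[
\dim_\bbF H_i(B_k\Gamma)=P^i_\Gamma(k)=\sum_{\substack{W\subseteq V^{\ess}(\Gamma)\\ \lvert W\rvert=i}}\left(\conv_{\sfv\in W}P_{\Gamma_\sfv}^1\right)(k).
\]
The set \(\HE^i_\Gamma(k)\) is designed to mirror this sum: it decomposes as a disjoint union over \(i\)-subsets \(W\) of essential vertices, and over each \(W\) as a product of local data at the vertices of \(W\) together with a placement of the remaining points. Matching each term of the convolution — in which the binomial coefficient \(\binom{k}{j}\) records the number of ways to distribute the \(j\) moving points — to the corresponding block of \(\HE^i_\Gamma(k)\) yields \(\lvert\HE^i_\Gamma(k)\rvert=P^i_\Gamma(k)\) termwise.

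Next I would define \(\alpha\) by realizing each \(x\in\HE^i_\Gamma(k)\) as an explicit \(i\)-cycle in a discrete model for \(B_k\Gamma\). An element of \(\HE^i_\Gamma(k)\) records a set \(W\) of \(i\) essential vertices, a local \(1\)-cycle datum at each \(\sfv\in W\), and a static configuration of the remaining points in the edges and grapes. To it I associate the product of the local star-cycles at the vertices of \(W\), taken against this fixed background. Since every stem edge is a cut edge, the local cycles at distinct essential vertices are supported in disjoint regions and commute; their product is therefore a genuine \(i\)-cycle, and \(\alpha(x)\) is defined to be its class in \(H_i(B_k\Gamma)\).

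The main obstacle is linear independence, which I would prove by induction on the number of essential vertices, cutting \(\Gamma\) along a stem edge as in the canonical \(1\)-bridge decomposition into two smaller bunches of grapes \(\Gamma'\) and \(\Gamma''\). The edge action splits \(H_*(B\Gamma)\) as the convolution of \(H_*(B\Gamma')\) and \(H_*(B\Gamma'')\) — the homological mechanism underlying the convolution product in \Cref{theorem:decomposition formula} — and \(\HE^i_\Gamma\) splits compatibly. Thus a nontrivial relation among the \(\alpha(x)\) would descend to a nontrivial relation on one side, contradicting the inductive hypothesis; the base case is the elementary bunch of grapes, where only \(i\le 1\) contributes and independence of the star-cycles is classical (Ko--Park). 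The delicate point — where the explicit geometry does the real work — is verifying that \(\alpha\) is strictly compatible with the splitting at the chain level, so that the product cycle for \(\Gamma\) is carried to the convolution of the product cycles for \(\Gamma'\) and \(\Gamma''\); only then does the induction close.
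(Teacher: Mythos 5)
Your architecture matches the paper's in its main ingredients --- the counting identity \(\lvert\HE^i_\Gamma(k)\rvert=P^i_\Gamma(k)=\dim_\bbF H_i(B_k\Gamma)\) (this is exactly \Cref{eq:1} combined with \(\cR_\Gamma\equiv0\) from \Cref{theorem:decomposition formula}), the explicit realization of each configuration as a stabilized product of local \(1\)-classes supported at the vertices of \(W\), and the \(1\)-bridge splitting \(H_\bullet(B_\star\Gamma)\cong H_\bullet(B_\star\Gamma_1)\otimes_{\bbF[\sfe]}H_\bullet(B_\star\Gamma_2)\) of \Cref{corollary:1-bridge} --- but you organize the endgame dually. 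The paper proves \emph{spanning} plus the count and gets independence for free: \Cref{lem:standard star} uses the \(X\)-, stabilized \(X\)-, and \(Q\)-relations to reduce an arbitrary star class to standard star and loop classes, \Cref{lem:one-to-one} matches these bijectively with \(\HE^1_{\ell,m}(k)\sqcup\HE^2_{\ell,m}(k)\), and \Cref{prop:star and loop} concludes by comparing cardinality with \(N_{\ell,m}(k)\). You instead propose \emph{independence} plus the count, with independence proved by induction over the bridge decomposition. That route is viable in principle (the \(\bbF[\sfe]\)-freeness from \Cref{proposition:free action of leaves} makes the tensor-product step legitimate, provided the inductive hypothesis is the full basis statement and not mere independence), and it is a genuinely different emphasis.

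However, there is a concrete gap at your base case. Independence of the \emph{specific} standard star and loop classes on an elementary bunch of grapes is not ``classical'': Ko--Park supply only the dimension formula \(N_{\ell,m}(k)\). The paper has to do real work here --- the relation-chasing in \Cref{lem:standard star} showing these classes \emph{generate} \(H_1(B\Gamma_{\ell,m})\) --- and only then does independence follow because the number of generators equals the dimension. Without that spanning argument (or some substitute, e.g.\ a discrete Morse basis), your induction does not start. A secondary imprecision: you describe \(\alpha\) as a product of ``star-cycles,'' but the type-\(2\) configurations must be realized by \emph{loop} classes on the grapes; omitting these would leave you far short of \(\dim_\bbF H_i(B_k\Gamma)\) classes.
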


\subsection{Future directions}
We propose the following questions as future directions.
\begin{enumerate}
\item \emph{Cohomology rings.} Recall from \Cref{thm:local data} that
  for a bunch of grapes \( \Gamma \), the Hilbert polynomials
  \(P_{\Gamma}^i(k)\) determine the local data of \(\Gamma\), and vice
  versa. Since there are non-homeomorphic bunches of grapes with the
  same local data, the Hilbert polynomials do not determine
  \( \Gamma \).

  By the result of Sabalka \cite{Sabalka2009}, if \( \Gamma \) and
  \( \Gamma' \) are trees, the cohomology rings \(H^*(B\Gamma)\) and
  \(H^*(B\Gamma')\) are isomorphic if and only if \( \Gamma \) and
  \( \Gamma' \) are homeomorphic. Hence one may ask the following relevant
  question:
\begin{question}
  For a bunch of grapes \(\Gamma\), does the cohomology ring
  \(H^*(B\Gamma)\) determine \(\Gamma\)?
\end{question}
\item \emph{Higher circumferences.} A graph \( \Gamma \) with
  circumference \(2\) is obtained from a bunch of grapes by replacing
  each edge of its stem \( \sfT \) with a multiple edge. Hence
  \( \Gamma \) is determined by a triple
  \((\sfT,\ell,\operatorname{mult})\), where
  \(\operatorname{mult}:E(\sfT)\to\bbZ_{\ge1}\) indicates the
  multiplicity of each edge.

  One of the simplest cases is when the stem \(\sfT\) is an edge. Then
  \(\Gamma\) is homeomorphic to a complete bipartite graph
  \(\mathsf{K}_{2,M}\) for some \(M\ge 3\). In this case, it is known
  that the polynomiality begins at \(k=2\) for the first and second
  homology groups. Another simple case is when the multiplicity of
  each edge is at most \(2\). Then one can check that the
  polynomiality begins at \(k=1\) for the first homology group.

  Even though graphs with circumference \(2\) share some properties
  with bunches of grapes, the action of multiple edges does not yield
  an injection, so \Cref{proposition:free action of leaves} fails. It
  might suggest that we need a resolution of length at least \(1\)
 in order to compute the Betti number of modules coming from
  its bridge decomposition. However, since multiple edges share their
  endpoints, there is still at most one half edge occupied for each
  endpoint in a \'Swi\k atkowski chain complex, which is a finitely
  generated model over \(\bbF[E]\) described in \cite{ADCK2022}.
\begin{question}
For a graph \(\Gamma\) with circumference \(2\), does the polynomiality of the Betti number \(\dim_\bbF H_i(B_k\Gamma)\) begin at \(k=1\) or \(2\)?
What if the circumference of \(\Gamma\) is larger than \(2\)?
\end{question}

\item \emph{Regularities.} One may ask the following
  questions on the regularity of \(H_i(B\Gamma)\) as an
  \(\bbF[E]\)-module:
\begin{question}
  For a general graph \(\Gamma=(V,E)\) and \(i\ge 2\), is there an
  expression for a lower bound of the regularity of \(H_i(B\Gamma)\)
  in terms of graph-theoretic properties of \( \Gamma \) such as the
  numbers of vertices and edges, some connectivity invariants, and so
  on?
\end{question}
\begin{question}
  For a fixed \(i\ge 2\), is there a graph \(\Gamma\) with
  arbitrarily large regularity of \(H_i(B\Gamma)\)?
\end{question}

\item \emph{Universal presentations.} It is known that the first
  homology group \(H_1(B\Gamma)\) is generated by \emph{star} and
  \emph{loop} classes coming from certain embeddings of the star graph
  with \( 3 \) edges and the circle, and the complete
  relations between them over \(\bbF[E]\) are also coming from
  embeddings of certain graphs. In general, any homology group
  \(H_i(B\Gamma)\) is asymptotically generated by products of star and
  loop classes \cite{AK2022}. Moreover, the second homology group
  \(H_2(B\Gamma)\) for each planar graph \(\Gamma\) is generated by
  products of star and loop classes, and certain classes coming from
  \(H_2(B_3 K_{2,4})\).

  Hence one may expect that for a fixed homological degree \(i\ge 2\),
  there is a pair of finite sets consisting of atomic graphs that
  provide both generators and relations in \(H_i(B\Gamma)\) for any
  graph \(\Gamma\), which are called the finite \emph{universal
    presentation}. However, arbitrarily high regularity might imply
  the existence of a generator or a relation that essentially involves
  an arbitrarily large number of particles, and the existence of
  finite universal presentations may fail. Therefore, we may ask the
  following question:

\begin{question}
What is the relationship between regularities and \emph{universal presentations} of \(H_\bullet(B\Gamma)\)? 
\end{question}
\end{enumerate}

\subsection{Outline of the paper}
In \Cref{prelim}, we briefly review the basics of generating
functions, graphs, and configuration spaces, and their
Hilbert--Poincar\'e series. \Cref{edge stabilization} introduces the
edge stabilization and considers one-bridge decompositions. In
\Cref{sec:hilbert polynomials}, we compute the Hilbert polynomial
\(P_\Gamma^i(k)\) of \(H_i(B_k\Gamma)\) for a bunch of grapes
\(\Gamma\) and prove \Cref{theorem:decomposition formula,thm:local
  data}. In \Cref{sec:comb-descr-hilb}, we provide a combinatorial
description of the coefficients of \(P_\Gamma^i(k)\), for a bunch of
grapes \(\Gamma\) whose stem has at least one edge, and we prove
\Cref{thm:1}. In \Cref{homology cycles}, an explicit correspondence
between combinatorial objects given in the previous section and
certain homology cycles that form an \(\bbF\)-basis of
\(H_i(B\Gamma)\) (\Cref{thm:combinatorial description for homology
  cycles}). Finally, in \Cref{appendix}, we consider bunches of grapes
whose stems are singletons.

\subsection*{Acknowledgments}
The first author was supported by Samsung Science and Technology Foundation under Project Number SSTF-BA2202-03. 
The second author was supported by the National Research Foundation of Korea (NRF) grant funded by the Korea government RS-2025-00557835.

\section{Preliminaries}\label{prelim}

In this section, we introduce the necessary definitions and notations.

\subsection{Generating functions}
Let \((c_k)_{k\in\bbZ}\) be a sequence with \(c_{-1}=c_{-2}=\cdots=0\).
We say that \(c_k\) is \emph{eventually a polynomial} if there exists a (unique) polynomial \(p(x)\) such that \(c_k = p(k)\) for every sufficiently large \(k\).
Then the generating function \(f(y)=\sum_{k\ge 0} c_k y^k\) of such a sequence \((c_k)_{k\in \bbZ}\) can be expressed uniquely as
\[
f(y) =r(y) + \sum_{k\ge 0} p(k)y^k
\]
for some polynomials \(p(x)\) and \(r(x)\).

Let \(\binom k j\coloneqq\frac{k(k-1)\cdots(k-j+1)}{j!}\) be the
binomial coefficient, which is a polynomial in \(k\) of degree \(j\).
Since \( \{k^j:j\ge0\} \) and \( \{\binom{k}{j}:j\ge0\} \) are bases of the space of polynomials,
any polynomial \(p(k)\) of degree \( d \) can be written uniquely as
\[
p(k)=\sum_{j=0}^d a_j k^j = \sum_{j=0}^d b_j \binom k j,
\]
where \( a_d\ne0 \) and \( b_d\ne 0 \). We call \( a_dk^d \) and
\( b_d \binom{k}{d} \) the \emph{leading term} and
\emph{\( b \)-leading term} of \( p(k) \), respectively. We also call
\( a_d \) and \( b_d \) the \emph{leading coefficient} and
\emph{\( b \)-leading coefficient} of \( p(k) \), respectively.

For \(p(k)=\sum_{j=0}^d b_j\binom k j\), we define the \emph{shifted
  polynomial} \(p^+(k)\) by
\[
p^+(k)=\sum_{j=0}^d b_j \binom {k+1} {j+1}.
\]
One can easily check that for an integer \(k\ge 0\),
\[
  p^+(k) = p(0)+p(1)+\cdots+p(k).
\]

\begin{defn}\label{defn:conv}
For two polynomials 
\[
p_1(k)=\sum_{j=0}^{d_1} b_{1,j}\binom k j
\quad\text{ and }\quad 
p_2(k)=\sum_{j=0}^{d_2} b_{2,j}\binom k j,
\]
we define a polynomial \((p_1*p_2)(k)\) by
\begin{align}\label{equation:product}
(p_1*p_2)(k)=\sum_{j=0}^{d_1+d_2}\sum_{\substack{j_1+j_2=j\\j_1,j_2\ge 0}}b_{1,j_1}b_{2,j_2}\binom k j.
\end{align}
\end{defn}

The binary operation \(*\) is associative and commutative,
and it satisfies the distributive law. Moreover, we have
\((p_1*p_2)(k) = c p_2(k)\) for a constant function
\(p_1(k)\equiv c\). Hence, the constant function
\(\mathbf{1}(k)\equiv 1\) plays the role of the identity under the
operation \(*\).

For a finite set of polynomials \(\{p_i(k): i\in \mathcal{I}\}\)
indexed by \(\mathcal{I}\), the polynomial
\(\left(\conv_{i\in\mathcal{I}} p_i\right)(k)\) is well-defined. We
define the empty product to be the constant function \(\mathbf{1}\),
that is, 
\[
\conv_{i\in \varnothing}\equiv \mathbf{1}.
\]

\begin{lem}\label{lemma:product}
  For polynomials \(p_1(k)\) and \(p_2(k)\), let
\[
f_1(y)=\sum_{k\ge 0}p_1(k)y^k\quad\text{ and }\quad 
f_2(y)=\sum_{k\ge 0}p_2(k)y^k.
\]
Then 
\[
f_1(y)f_2(y) = \sum_{k\ge0} (p_1*p_2)^+(k) y^k.
\]
\end{lem}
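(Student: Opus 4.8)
The plan is to translate everything into the language of rational generating functions by means of the single identity
\[
\sum_{k\ge0}\binom kj y^k = \frac{y^j}{(1-y)^{j+1}},
\]
the negative binomial series, valid for each fixed \(j\ge0\). Since \(p_1\) and \(p_2\) are given in the binomial basis, and since both the convolution \(*\) and the shift operation \((\cdot)^+\) are defined through that basis, this identity should convert each side of the asserted equality into an explicit rational function, after which a comparison of the two closed forms finishes the proof.

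Concretely, first I would expand \(f_1(y)=\sum_{j=0}^{d_1}b_{1,j}\,y^j/(1-y)^{j+1}\) and likewise for \(f_2\), and multiply to obtain
\[
f_1(y)f_2(y)=\sum_{j_1=0}^{d_1}\sum_{j_2=0}^{d_2} b_{1,j_1}b_{2,j_2}\,\frac{y^{j_1+j_2}}{(1-y)^{j_1+j_2+2}}.
\]
For the right-hand side, I would write \(p_1*p_2=\sum_j c_j\binom kj\) with \(c_j=\sum_{j_1+j_2=j}b_{1,j_1}b_{2,j_2}\) by \Cref{defn:conv}, so that \((p_1*p_2)^+=\sum_j c_j\binom{k+1}{j+1}\). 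Computing its generating function then requires the shifted identity \(\sum_{k\ge0}\binom{k+1}{j+1}y^k=y^j/(1-y)^{j+2}\), which I would derive from the base identity by the substitution \(m=k+1\) together with the vanishing \(\binom{0}{j+1}=0\). Substituting and reindexing by \(j=j_1+j_2\) turns \(\sum_{k\ge0}(p_1*p_2)^+(k)y^k\) into exactly the same double sum obtained above, which proves the lemma.

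A cleaner way to organize the bookkeeping — and the one I would favor in the write-up — is to isolate the role of \((\cdot)^+\). Using the stated property \(p^+(k)=p(0)+\cdots+p(k)\), passing from \(p\) to \(p^+\) is precisely multiplication of the generating function by \(1/(1-y)\), so it suffices to check the unshifted identity \((1-y)f_1(y)f_2(y)=\sum_{k\ge0}(p_1*p_2)(k)y^k\); this is the content of comparing the two double sums with one fewer power of \((1-y)\) in the denominator. I expect no genuine obstacle here: the only points needing care are the index shift in the shifted identity and the legitimacy of reindexing the sums (finite in \(j\), formal in \(y\)), both of which are routine. The decomposition through \(1/(1-y)\) also makes transparent why the single shift \((\cdot)^+\), rather than a double one, appears on the right-hand side.
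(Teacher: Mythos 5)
Your proof is correct, but it takes a genuinely different route from the paper's. The paper works coefficient-by-coefficient: it expands both sides in the binomial basis and reduces the claim to the single identity
\[
\sum_{\substack{k_1+k_2=k\\ k_1,k_2\ge0}}\binom{k_1}{j_1}\binom{k_2}{j_2}=\binom{k+1}{j_1+j_2+1},
\]
which it then proves bijectively by classifying \((j_1+j_2+1)\)-subsets of \(\{1,\dots,k+1\}\) according to their \((j_1+1)\)-st smallest element. You instead pass to closed-form rational generating functions via \(\sum_{k\ge0}\binom{k}{j}y^k=y^j/(1-y)^{j+1}\) and compare the resulting expressions; your shifted identity \(\sum_{k\ge0}\binom{k+1}{j+1}y^k=y^j/(1-y)^{j+2}\) is derived correctly (the term \(\binom{0}{j+1}=0\) is the only point to check), and the reindexing over \(j=j_1+j_2\) is legitimate since the sums over \(j\) are finite. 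The two arguments encode the same content — your comparison of rational functions is exactly the generating-function form of the paper's Vandermonde-type convolution — but your organization, especially the observation that \((\cdot)^+\) is multiplication by \(1/(1-y)\), makes the appearance of the single shift on the right-hand side conceptually transparent, whereas the paper's version stays entirely elementary and self-contained, needing no closed form for \(\sum_k\binom{k}{j}y^k\).
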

\begin{proof}
  Let
  \[
p_1(k)=\sum_{j\ge 0} b_{1,j}\binom k j
\quad\text{ and }\quad 
p_2(k)=\sum_{j\ge 0} b_{2,j}\binom k j,
\]
Then
\begin{align*}
  f_1(y)f_2(y)
  &= \sum_{k_1,k_2\ge0} \sum_{j_1,j_2\ge0} b_{1,j_1} b_{2,j_1}
  \binom{k_1}{j_1} \binom{k_2}{j_2} y^{k_1+k_2},\\
    \sum_{k\ge0} (p_1*p_2)^+(k) y^k
  &= \sum_{k\ge0} \sum_{j_1,j_2\ge0} b_{1,j_1} b_{2,j_2} \binom{k+1}{j_1+j_2+1} y^k.
\end{align*}
Thus, it suffices to show that, for fixed nonnegative integers \( j_1 \), \( j_2 \), and \( k \),
\begin{equation}\label{eq:10}
 \sum_{\substack{k_1+k_2=k\\ k_1,k_2\ge0}} \binom{k_1}{j_1} \binom{k_2}{j_2}=
  \binom{k+1}{j_1+j_2+1}.
\end{equation}
The right-hand side \( \binom{k+1}{j_1+j_2+1} \) of \eqref{eq:10} is
the number of subsets \( A=\{a_1 < \cdots < a_{j_1+j_2+1}\} \) of
\( \{1,2,\dots,k+1\} \). The number of such \( A \)'s with
\( a_{j_1+1}=k_1+1 \), where \( k_1+k_2=k \), is
\( \binom{k_1}{j_1} \binom{k_2}{j_2} \). Summing over all such pairs
\( (k_1,k_2) \) gives the left-hand side of \eqref{eq:10}, completing
the proof.
\end{proof}

\subsection{Basics on graphs}
Throughout this paper, a \emph{graph} \(\Gamma\) means a finite and
regular CW complex of dimension one\footnote{Hence, graphs without
  edges are not allowed.}, whose \(0\)-cells and \(1\)-cells are
called \emph{vertices} and \emph{edges}, respectively. By denoting the
sets of vertices and edges by \(V\) and \(E\), respectively, we may
write \(\Gamma=(V,E)\) as usual. A graph may have loops and multiple
edges, where a \emph{loop} is an edge whose boundary is a single
vertex and \emph{multiple edges} are edges with the same boundaries.

Considering each edge \(\sfe\) as a map \(\sfe:[0,1]\to\Gamma\), the
two restrictions \(\sfe|_{[0,1/3)}\) and \(\sfe|_{(2/3,1]}\) of
\(\sfe\) are called the \emph{half edges} of \(\sfe\). We say that
an half edge \(\sfh\) of \(\sfe\) is \emph{attached to a vertex}
\(\sfv\) if \(\sfv\in \sfh\). For each vertex \(\sfv\in \Gamma\), we
denote the set of half edges attached to \(\sfv\) by
\(\mathcal{H}_\sfv\), and the \emph{degree} \(\deg(\sfv)\) of \(\sfv\)
is defined to be the cardinality of \(\mathcal{H}_\sfv\). An
\emph{essential} vertex is a vertex of degree at least three. We say
that a graph is \emph{nontrivial} if it contains at least one
essential vertex.

Let \(\Graph\) be the set of finite and connected graphs. Two graphs
are said to be \emph{topologically isomorphic} or \emph{homeomorphic}
if they are homeomorphic as topological spaces. Equivalently, two
graphs are homeomorphic if and only if they are isomorphic as graphs
up to \emph{subdivision} and \emph{smoothing}, which are the
operations that respectively add and remove a vertex of degree two in
the middle of an edge. See \Cref{figure:subdivision and smoothing}. We
write \( \Gamma=\Gamma' \) and \( \Gamma\cong\Gamma' \) if \(\Gamma\)
and \(\Gamma'\) are isomorphic and homeomorphic, respectively.

\begin{figure}
\[
\begin{tikzcd}[column sep=4pc]
\begin{tikzpicture}[baseline=-.5ex]
\draw[dashed] (0,0) circle (1);
\draw (-1,0) -- node[above] {\(\sfe\)} (1,0);
\end{tikzpicture}\ar[r,"\text{subdiv.}", yshift=.5ex]&
\begin{tikzpicture}[baseline=-.5ex]
\draw[dashed] (0,0) circle (1);
\draw (-1,0) -- (0,0) node[midway, above] {\(\sfe'\)} -- node[midway, above] {\(\sfe''\)} (1,0);
\draw[fill] (0,0) circle (2pt) node[above] {\(\sfv\)};
\end{tikzpicture}\ar[l,"\text{smoothing}", yshift=-.5ex]
\end{tikzcd}
\]
\caption{The subdivision along \(\sfe\) and the smoothing at \(\sfv\).}
\label{figure:subdivision and smoothing}
\end{figure}
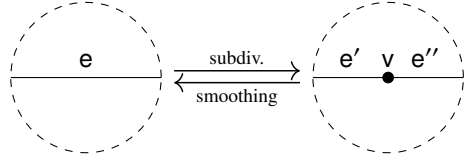

Let \(E_0=\{\sfe_1,\dots,\sfe_m\}\subseteq E\) be a subset of edges of a connected graph \(\Gamma=(V,E)\).
We say that \(E_0\) is an \emph{edge-\(m\)-cut} of \(\Gamma\) if it satisfies the following conditions:
\begin{enumerate}
\item the graph \((V,E\setminus E_0)\) is disconnected, and
\item for each proper subset \(E_0'\subsetneq E_0\), the graph \((V,E\setminus E_0')\) is connected.
\end{enumerate}
In this case, we define the \emph{\(m\)-bridge decomposition} of
\( \Gamma \) along \(E_0\) to be the graph obtained from \( \Gamma \)
by cutting the edges in \( E_0 \) as depicted in
\Cref{figure:cutting}. The edges may be renamed, if necessary.

\begin{figure}
\[
\begin{tikzcd}
\Gamma=
\begin{tikzpicture}[baseline=-.5ex]
\draw (-1.5,-1) rectangle (-0.5,1) (0.5,-1) rectangle (1.5,1);
\draw (-0.5,0.8) -- node[above=-.75ex] {\(\sfe_1\)} (0.5,0.8) ;
\draw (-0.5,0.2) -- node[above=-.75ex] {\(\sfe_2\)} (0.5,0.2) ;
\draw (0,-0.5) node {\(\vdots\)};
\draw (-0.5,-0.8) -- node[below=-.5ex] {\(\sfe_m\)} (0.5,-0.8) ;
\end{tikzpicture}\ar[r]&
\begin{tikzpicture}[baseline=-.5ex]
\draw (-1.5,-1) rectangle (-0.5,1) (0.5,-1) rectangle (1.5,1);
\draw[fill] (-0.5,0.8) -- (0,0.8) circle(2pt) -- (0.5,0.8);
\draw[fill] (-0.5,0.2) -- (0,0.2) circle(2pt) -- (0.5,0.2);
\draw (0,-0.5) node {\(\vdots\)};
\draw[fill] (-0.5,-0.8) -- (0,-0.8) circle(2pt) -- (0.5,-0.8);
\end{tikzpicture}\ar[r]&
\Gamma_1=\begin{tikzpicture}[baseline=-.5ex]
\draw (-1.5,-1) rectangle (-0.5,1);
\draw[fill] (-0.5,0.8) -- node[midway,above=-.75ex] {\(\sfe_1\)} (0,0.8) circle(2pt) node[right] {\(\sfv_1\)};
\draw[fill] (-0.5,0.2) -- node[midway,above=-.75ex] {\(\sfe_2\)} (0,0.2) circle(2pt) node[right] {\(\sfv_2\)};
\draw (-0.25,-0.5) node {\(\vdots\)};
\draw[fill] (-0.5,-0.8) -- node[midway,below=-.5ex] {\(\sfe_m\)} (0,-0.8) circle(2pt) node[right] {\(\sfv_m\)};
\end{tikzpicture}
\begin{tikzpicture}[baseline=-.5ex]
\draw (0.5,-1) rectangle (1.5,1);
\draw[fill] (0,0.8) circle(2pt) node[left] {\(\sfv_1\)} -- node[midway,above=-.75ex] {\(\sfe_1\)} (0.5,0.8);
\draw[fill] (0,0.2) circle(2pt) node[left] {\(\sfv_2\)} -- node[midway,above=-.75ex] {\(\sfe_2\)} (0.5,0.2);
\draw (0.25,-0.5) node {\(\vdots\)};
\draw[fill] (0,-0.8) circle(2pt) node[left] {\(\sfv_m\)} -- node[midway,below=-.5ex] {\(\sfe_m\)} (0.5,-0.8);
\end{tikzpicture}=\Gamma_2
\end{tikzcd}
\]
\caption{An \(m\)-bridge decomposition of \(\Gamma\).}
\label{figure:cutting}
\end{figure}
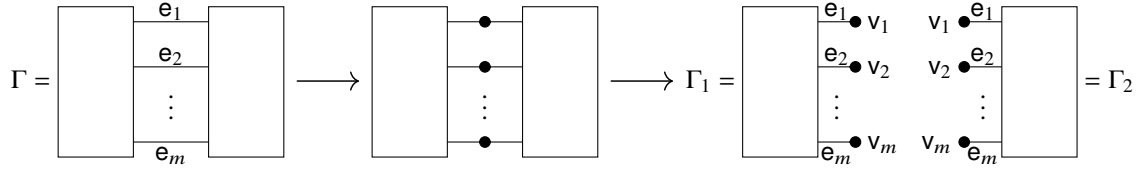

Let \(\gamma=(\sfv_0,\dots,\sfv_n)\) be a simple cycle of \(\Gamma\),
that is, \(\sfv_i\) and \(\sfv_{i+1}\) are joined by an edge for each
\(0\le i<n\), \(\sfv_0=\sfv_n\), and \(\sfv_1,\dots, \sfv_n\) are
distinct.
We call the number \(n\) the \emph{length} of \(\gamma\).

The \emph{circumference} \(\sfC(\Gamma)\) of \(\Gamma\) is the maximum
number of edges in a simple cycle of \(\Gamma\). The \emph{topological
  circumference} \(\TC(\Gamma)\) of \(\Gamma\) is the minimum
circumference among all graphs homeomorphic to \(\Gamma\), that is,
\[
\TC(\Gamma)=\min\{\sfC(\Gamma'): \Gamma\cong\Gamma'\}.
\]
Note that the circumference is an invariant of the isomorphic classes
of graphs, while the topological circumference an invariant of the
homeomorphic classes of graphs. For any graph \(\Gamma\) containing at
least one essential vertex, the topological circumference is equal to
the maximum of numbers of essential vertices among simple cycles in
\(\Gamma\).

Let us denote the set of (homeomorphic classes of) graphs with topological circumference at most \(i\) by \(\Graph_{i}\). Then we have a strictly increasing filtration \(\Graph_\bullet\) as follows:
\[
\Graph_0 \subsetneq \Graph_1 \subsetneq \Graph_2 \subsetneq \cdots \subsetneq \Graph.
\]

One can observe the following: the set \(\Graph_0\) consists of trees,
and any graph in \(\Graph_1\) is obtained by attaching loops to a
tree. A \emph{bunch of grapes} is a graph with topological
circumference at most one. We will denote the set of bunches of grapes
by \(\Grape\) instead of \(\Graph_1\). See \Cref{figure:example of a
  bunch of grapes}.

\begin{figure}
\begin{align*}
\Gamma&\cong\begin{tikzpicture}[baseline=-.5ex]
\draw[thick,fill] (0,0) circle (2pt) node (A) {} -- ++(5:2) circle(2pt) node(B) {} -- +(0,1.5) circle (2pt) node (C) {} +(0,0) -- +(0,-1.5) circle (2pt) node(D) {} +(0,0) -- ++(-10:1.5) circle(2pt) node(E) {} -- ++(10:1.5) circle(2pt) node(F) {} -- +(120:1) circle(2pt) node(G) {} +(0,0) -- +(60:1) circle (2pt) node(H) {} +(0,0) -- ++(-5:1) circle (2pt) node(I) {} -- +(30:1) circle(2pt) node(J) {} +(0,0) -- +(-30:1) circle (2pt) node(K) {};
\draw[thick, fill] (E.center) -- ++(0,-1) circle (2pt) -- +(-60:1) circle (2pt) +(0,0) -- +(-120:1) circle (2pt);
\grape[180]{A};
\grape[0]{C}; \grape[90]{C}; \grape[180]{C};
\grape[-90]{D};
\grape[-90]{F};
\grape[120]{G};
\grape[60]{H};
\grape[-30]{K};
\end{tikzpicture}&
\Gamma_{\ell,m}&=\ell\left\{\begin{tikzpicture}[baseline=-.5ex]
\draw[thick,fill] (0,0) circle (2pt) node (X) {} -- (0:1) circle (2pt) (0,0) -- (30:1) circle (2pt) (0,0) -- (-30:1) circle (2pt);
\grape[90]{X};
\grape[180]{X};
\grape[270]{X};
\end{tikzpicture}
\right\}m
\end{align*}
\caption{A bunch of grapes (left) and an elementary bunch of grapes (right).}
\label{figure:example of a bunch of grapes}
\end{figure}
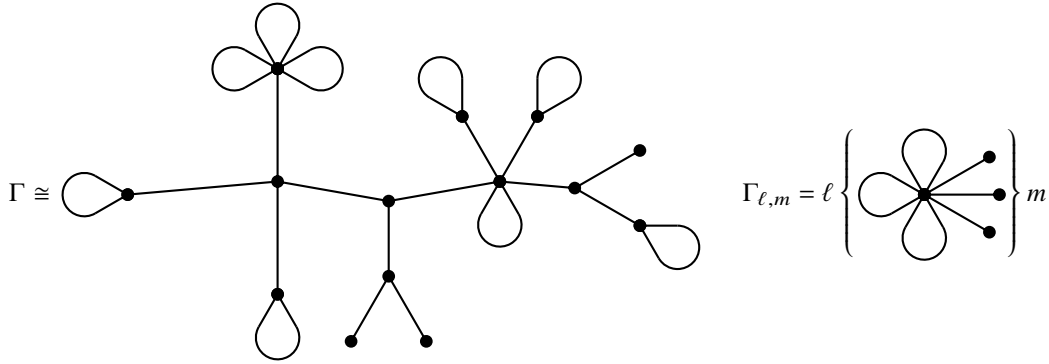

Let \(\Gamma\) be a graph containing only one essential vertex. Then,
up to homeomorphism, \(\Gamma\) is uniquely determined by the numbers
of loops and leaves, where a \emph{leaf} is a vertex of degree \( 1 \).

\begin{defn}[Graphs with only one essential vertex]\label{defn:elementary}
  A bunch of grapes is called an \emph{elementary bunch of grapes},
  denoted by \(\Gamma_{\ell,m}\), if it is a connected graph with at most one essential vertex having \(\ell\) loops and \(m\) leaves; see the right
  picture of \Cref{figure:example of a bunch of grapes}.
\end{defn}

It is important to note that for any bunch of grapes, each \(1\)-bridge decomposition is along an edge that is not a loop.

\begin{remark}
Note that any graph in \(\Graph_2\) can be obtained from a bunch of grapes by replacing some edges by multiple edges.
\end{remark}

\subsection{Configuration spaces and Hilbert--Poincar\'e series}
Let \(X\) be a topological space. For each integer \(k\ge 0\), called
a \emph{braid index}, the \emph{ordered configuration space} \(F_kX\)
is the set of ordered \(k\)-tuples of distinct points in \( X \), and
the \emph{(unordered) configuration space} \(B_kX\) is the set of
subsets of \( X \) consisting of \(k\) distinct points, viewed as
topological spaces:
\begin{align*}
F_kX&=\{(x_1,\dots, x_k)\in X^k: x_i\neq x_j\mbox{ for all } i\neq j\},\\
B_kX&=\{\{x_1,\dots, x_k\}\subseteq X: x_i\neq x_j\mbox{ for all } i\neq j\}.
\end{align*}
We also denote the unions of all \(F_kX\)'s and \(B_kX\)'s by \(F_\star X\) and \(B_\star X\), respectively:
\[
F_\star X=\coprod_{k\ge 0}F_kX \quad\text{ and }\quad
B_\star X=\coprod_{k\ge 0}B_kX.
\]

Note that \(B_kX\) can be regarded as the quotient space of \(F_kX\) by the action of the symmetric group \(\bbS_k\) that permutes the coordinates in a canonical way.
Hence \(F_kX\) and \(B_kX\) are equipped with the subspace topology of the product topology on \(X^k\) and the quotient topology via the canonical quotient map \(F_kX\to B_kX\cong F_kX/\bbS_k\).

The following lemma is immediate from the definitions of \( F_kX \) and \( B_kX \).
\begin{lem}\label{lemma:facts}
Let \(X\), \( X_1 \), and \(X_2\) be topological spaces. Then the following hold.
\begin{enumerate}
\item For \(k\le 1\), the topological spaces \( F_kX \) and \( B_kX \) are given by
\[
F_0X=B_0X\cong\{*\}\quad\text{ and }\quad
F_1X=B_1X=X.
\]
\item If there is an embedding \(f:X_1\to X_2\), then for each \(k\ge 0\), there are induced maps
\[
F_kf:F_kX_1\to F_kX_2\quad\text{ and }\quad
B_kf:B_kX_1\to B_kX_2.
\]
\item There are homeomorphisms 
\[
B_k(X_1\sqcup X_2) \cong
\coprod_{\substack{k_1+k_2=k\\k_1,k_2\ge 0}} B_{k_1}X_1\times B_{k_2}X_2 \quad\text{ and }\quad
B_\star(X_1\sqcup X_2) \cong B_\star X_1 \times B_\star X_2.
\]
\end{enumerate}
\end{lem}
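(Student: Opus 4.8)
The plan is to check each of the three parts directly from the set-theoretic definitions of $F_kX$ and $B_kX$, verifying in every case that the obvious bijection is a homeomorphism for the relevant subspace or quotient topology. For part (1), I would note that $F_0X$ and $B_0X$ each consist of a single element (the empty tuple and the empty set), hence are one-point spaces, and that the bijections $(x)\mapsto x$ and $\{x\}\mapsto x$ identify $F_1X$ and $B_1X$ with $X$: the subspace topology on $F_1X\subseteq X^1=X$ and the quotient topology on $B_1X=F_1X/\bbS_1$ (with $\bbS_1$ trivial) both coincide with the topology of $X$. For part (2), given an embedding $f:X_1\to X_2$, I would set $F_kf\colon (x_1,\dots,x_k)\mapsto (f(x_1),\dots,f(x_k))$; injectivity of $f$ sends distinct coordinates to distinct coordinates, so the image lies in $F_kX_2$, and continuity is inherited from $f^{\times k}\colon X_1^k\to X_2^k$ and the subspace topology. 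Since $F_kf$ is $\bbS_k$-equivariant, it descends through the quotient maps to a continuous $B_kf\colon B_kX_1\to B_kX_2$ by the universal property of the quotient.

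For part (3), the essential observation is that $X_1$ and $X_2$ embed in $X_1\sqcup X_2$ as clopen subsets. Consequently, for any configuration $S\in B_k(X_1\sqcup X_2)$ the pair $(|S\cap X_1|,|S\cap X_2|)$ is a locally constant function, so $B_k(X_1\sqcup X_2)$ splits as the disjoint union, over pairs $(k_1,k_2)$ with $k_1+k_2=k$, of the clopen subspace $\widetilde U_{k_1,k_2}$ of configurations having exactly $k_i$ points in $X_i$. On each piece I would use the bijection $S\mapsto (S\cap X_1,S\cap X_2)$ onto $B_{k_1}X_1\times B_{k_2}X_2$, with inverse the merge map $(S_1,S_2)\mapsto S_1\sqcup S_2$. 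To confirm these are mutually inverse homeomorphisms, I would pass to the ordered level: concatenation $F_{k_1}X_1\times F_{k_2}X_2\to F_k(X_1\sqcup X_2)$ is a homeomorphism onto the clopen set $U$ of tuples whose first $k_1$ entries lie in $X_1$ and last $k_2$ in $X_2$, and it intertwines the block $\bbS_{k_1}\times\bbS_{k_2}$-action with the restriction of the $\bbS_k$-action. Since $U$ meets every $\bbS_k$-orbit of $q^{-1}(\widetilde U_{k_1,k_2})$ and the two coincide exactly on $\bbS_{k_1}\times\bbS_{k_2}$-orbits, the quotient map restricts to a homeomorphism $\widetilde U_{k_1,k_2}\cong (F_{k_1}X_1\times F_{k_2}X_2)/(\bbS_{k_1}\times\bbS_{k_2})=B_{k_1}X_1\times B_{k_2}X_2$. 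The $B_\star$ statement follows by taking the disjoint union over all $k\ge 0$ and regrouping, using that finite products distribute over disjoint unions.

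The main obstacle — indeed the only step that is not purely formal — is confirming that the cardinality-splitting bijection in part (3) is a genuine homeomorphism and not merely a continuous bijection. This is exactly what the clopen decomposition above secures: it makes the splitting $S\mapsto(S\cap X_1,S\cap X_2)$ locally constant in its indexing data, and it lets the inverse be read off from the \emph{open} concatenation embedding on ordered configuration spaces, so that both directions are continuous. Everything else is a direct translation of the definitions, consistent with the remark that the lemma is immediate.
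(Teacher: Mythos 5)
Your proof is correct and follows the same route the paper intends: the paper states this lemma without proof as ``immediate from the definitions,'' and your argument is simply a careful unwinding of those definitions, with the only substantive point (that the cardinality-splitting in part (3) is a homeomorphism, not just a continuous bijection) handled properly via the clopen decomposition and the concatenation map on ordered configuration spaces. No gaps.
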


\begin{remark}
One may regard \(B_\star(-)\) as a monoidal functor 
\(B_\star(-):(\Emb,\sqcup) \to (\Emb,\times)\)
between monoidal categories of topological spaces with embedding, whose monoidal structures are given by the disjoint union \(\sqcup\) and the Cartesian product \(\times\).
\end{remark}

Let us fix a field \(\bbF\) and consider a singular chain complex \(C_\bullet(B_\star X)\) over \(\bbF\).
Then \(C_\bullet(B_\star X)\) and its homology group \(H_\bullet(B_\star X)\) are bigraded by the homological degree and the braid index, denoted by \(i\) and \(k\), respectively:
\[
C_\bullet(B_\star X) \cong \bigoplus_{i,k\ge 0} C_i(B_kX)\quad\text{ and }\quad
H_\bullet(B_\star X)\cong \bigoplus_{i,k\ge 0} H_i(B_kX).
\]

We define the \emph{Hilbert--Poincar\'e series} \(\cP_X(x,y)\) as the formal sum
\[
\cP_X(x,y)=\sum_{i,k\ge 0} \dim_\bbF H_i(B_kX)x^i y^k.
\]
Then, by \Cref{lemma:facts}(3), we obtain the following corollary.
\begin{cor}\label{corollary:disjoint union}
Let \(X_1\) and \(X_2\) be topological spaces. Then 
\[
\cP_{X_1\sqcup X_2}(x,y) = \cP_{X_1}(x,y) \cdot \cP_{X_2}(x,y).
\]
\end{cor}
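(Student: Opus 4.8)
The plan is to reduce the identity to the homeomorphism recorded in \Cref{lemma:facts}(3) combined with the Künneth theorem over the field \(\bbF\). First I would compare the two sides coefficient by coefficient: by definition the coefficient of \(x^iy^k\) in \(\cP_{X_1\sqcup X_2}(x,y)\) is \(\dim_\bbF H_i(B_k(X_1\sqcup X_2))\), while the coefficient of \(x^iy^k\) in the product \(\cP_{X_1}(x,y)\cdot\cP_{X_2}(x,y)\) is
\[
\sum_{\substack{k_1+k_2=k\\ i_1+i_2=i}} \dim_\bbF H_{i_1}(B_{k_1}X_1)\cdot \dim_\bbF H_{i_2}(B_{k_2}X_2).
\]
So it suffices to prove that the former dimension equals this double sum for every \(i,k\ge 0\).

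The key step is the computation of \(H_i(B_k(X_1\sqcup X_2))\). Substituting the homeomorphism \(B_k(X_1\sqcup X_2)\cong\coprod_{k_1+k_2=k} B_{k_1}X_1\times B_{k_2}X_2\) from \Cref{lemma:facts}(3), and using that singular homology carries a finite disjoint union to the direct sum of homologies, I get
\[
H_i(B_k(X_1\sqcup X_2))\cong \bigoplus_{\substack{k_1+k_2=k\\ k_1,k_2\ge 0}} H_i(B_{k_1}X_1\times B_{k_2}X_2).
\]
Because the coefficients lie in a field, the Künneth theorem applies with vanishing Tor terms and gives
\[
H_i(B_{k_1}X_1\times B_{k_2}X_2)\cong \bigoplus_{\substack{i_1+i_2=i\\ i_1,i_2\ge 0}} H_{i_1}(B_{k_1}X_1)\otimes_\bbF H_{i_2}(B_{k_2}X_2).
\]
Taking \(\bbF\)-dimensions and using \(\dim_\bbF(U\otimes_\bbF W)=\dim_\bbF U\cdot\dim_\bbF W\) recovers exactly the double sum above.

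I do not expect a genuine obstacle: the whole argument is the assembly of \Cref{lemma:facts}(3), additivity of homology on disjoint unions, and Künneth. The only point demanding care is the invocation of Künneth, which is clean precisely because \(\bbF\) is a field, so every homology group is free over \(\bbF\) and no correction terms appear; in effect this corollary is the homological shadow of the monoidal property \(B_\star(X_1\sqcup X_2)\cong B_\star X_1\times B_\star X_2\) noted after \Cref{lemma:facts}. A secondary caveat is that one should know the relevant homology groups are finite-dimensional for the coefficients of \(\cP\) to be honest integers, which holds in the graph setting of interest.
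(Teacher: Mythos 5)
Your argument is exactly the paper's: the corollary is deduced from \Cref{lemma:facts}(3) together with additivity of homology over disjoint unions and the K\"unneth theorem over the field \(\bbF\), which is precisely the route the paper takes (it simply cites \Cref{lemma:facts}(3) and leaves the K\"unneth step implicit). Your write-up is a correct and slightly more detailed version of the same proof.
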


\section{Edge stabilization and one-bridge decomposition}\label{edge stabilization}
From now on, we focus on the case when \(X\) is a graph
\( \Gamma=(V,E) \).

\subsection{Edge stabilization and the Hilbert polynomial}
Let \({\sfe}\) be an open edge of \(\Gamma\), which is canonically
parameterized by the unit open interval \((0,1)\). Hence there is a
homeomorphism \(\phi_{{\sfe}}:(0,1)\to {\sfe}\subseteq \Gamma\). We
define a map \((-)_{{\sfe}}:B_\star\Gamma\to B_{\star}(0,1)\) as follows: for
each \(t\in(0,1)\) and \(\bfx\in B_{\star}\Gamma\),
\[
t\in\bfx_{{\sfe}}\in B_{\star}(0,1) \Longleftrightarrow \phi_{{\sfe}}(t)\in \bfx.
\]

Let \(\bft=\{t_1,\dots,t_k\}\in B_{\star}(0,1)\) for \(k\ge 0\), where
\(0<t_1<\dots<t_k<1\), and let \( t_0=0 \) and \( t_{k+1} \). The
\emph{stabilization} \(s\cdot(-):B_\star(0,1)\to B_{\star}(0,1)\) on
\((0,1)\) is the map defined by \(s\cdot\bft = \bft'\), where
\(\bft'=\{t_1',\dots,t_{k+1}'\}\) is given by
\( t_i' = (t_{i-1}+t_i)/2 \) for \( 1\le i\le k+1 \). Note that
\(\bft'\) has exactly one more point than \(\bft\).

The \emph{edge stabilization} \(\sfe\cdot(-):B_\star\Gamma\to B_{\star}\Gamma\) on \(\sfe\) is then the map defined by
\[
\sfe\cdot\bfx = \bfx\setminus \phi_{{\sfe}}(\bfx_{{\sfe}}) \cup \phi_{{\sfe}}(s\cdot\bfx_{{\sfe}}).
\]
Then the edge stabilization on \(\sfe\) induces a continuous map
\(\sfe\cdot(-):B_k\Gamma\to B_{k+1}\Gamma\) for each \(k\ge 0\).
Therefore we may regard the singular chain complex
\(C_\bullet(B_\star\Gamma)\) as an \(\bbF[E]\)-module\footnote{\(\bbF[E]\) is the multivariate polynomial ring generated by the elements in \(E\).}, which is in fact chain homotopy equivalent to a finitely generated \(\bbF[E]\)-module.

\begin{thm}\cite[Theorems~2.10 and 1.1]{ADCK2020}\label{thm:ADCK2020}
  Let \(\Gamma=(V,E)\) be a graph. Then the singular chain complex
  \(C_\bullet(B_\star\Gamma)\) is an \(\bbF[E]\)-module, and there is
  a finitely generated free \(\bbF[E]\)-module \(S_\bullet\Gamma\),
  called the \emph{\'Swi\k{a}tkowski complex} of \(\Gamma\), that is
  chain equivalent to \(C_\bullet(B_\star\Gamma)\). Therefore the
  homology group \(H_\bullet(B_\star\Gamma)\) is a finitely generated
  \(\bbF[E]\)-module.
\end{thm}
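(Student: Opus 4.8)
The plan is to replace the singular chains $C_\bullet(B_\star\Gamma)$, up to $\bbF[E]$-linear chain equivalence, by a small and explicitly combinatorial model, and then to read off finite generation from Noetherianity. I would run the argument in four stages: (i) upgrade the edge stabilization maps to a genuine $\bbF[E]$-module structure; (ii) pass to a cellular model via Abrams' discretized configuration space; (iii) compress that cellular model to the finitely generated free Świątkowski complex $S_\bullet\Gamma$; and (iv) deduce finite generation of homology from the Hilbert basis theorem.

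For stage (i), the excerpt already provides that each $\sfe\cdot(-)\colon B_k\Gamma\to B_{k+1}\Gamma$ is continuous, so it induces a chain endomorphism of $C_\bullet(B_\star\Gamma)$ raising the braid index by one. To obtain an action of the commutative ring $\bbF[E]$, and not merely of the free monoid on $E$, I would observe that for distinct edges $\sfe$ and $\sff$ the two stabilizations alter the configuration only inside the disjoint open edges $\sfe$ and $\sff$ (which meet, if at all, only at a vertex). Hence $\sfe\cdot(\sff\cdot\bfx)=\sff\cdot(\sfe\cdot\bfx)$ already as continuous maps, and by functoriality of $C_\bullet$ the induced chain maps commute strictly. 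This makes $C_\bullet(B_\star\Gamma)$ a genuine $\bbF[E]$-module whose braid-index grading matches the grading of $\bbF[E]$ by total degree.

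For stages (ii)--(iii), I would first subdivide $\Gamma$ to make it sufficiently subdivided in Abrams' sense; since subdivision preserves the homeomorphism type it changes neither the configuration spaces up to homotopy nor the edge action. Then each $B_k\Gamma$ deformation retracts onto the discretized cube-complex configuration space, compatibly with edge stabilization, so the cellular chains form a complex of free $\bbF[E]$-modules quasi-isomorphic to $C_\bullet(B_\star\Gamma)$ but too large to exhibit finite generation directly. I would therefore build $S_\bullet\Gamma$ from the local combinatorics: a free $\bbF[E]$-module whose generators over $\bbF[E]$ record, at each vertex, whether it is empty, occupied by a single particle, or ``branched'' along one of its half edges, with the differential sending a branched generator to the difference between pushing the particle onto the incident edge and leaving it at the vertex. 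Because there are finitely many vertices and finitely many half edges, $S_\bullet\Gamma$ has finite rank over $\bbF[E]$. The crux is then the chain equivalence $S_\bullet\Gamma\simeq C_\bullet(B_\star\Gamma)$ of complexes of $\bbF[E]$-modules, which I would establish either by an $\bbF[E]$-equivariant discrete Morse matching on the cube complex in the style of Farley--Sabalka whose critical cells are exactly the $\bbF[E]$-generators of $S_\bullet\Gamma$, or by writing down explicit chain maps in both directions together with contracting homotopies.

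Finally, for stage (iv): since $\bbF$ is a field, $\bbF[E]$ is Noetherian by the Hilbert basis theorem, and $S_\bullet\Gamma$ is a bounded complex of finitely generated free $\bbF[E]$-modules. Each homology module is then a subquotient of a finitely generated module over a Noetherian ring, hence finitely generated, and transporting along the chain equivalence shows that $H_\bullet(B_\star\Gamma)$ is finitely generated over $\bbF[E]$. The main obstacle is the chain equivalence in stage (iii): ensuring that the compression from the cube-complex chains to the finite complex $S_\bullet\Gamma$ is $\bbF[E]$-linear, that the differential on $S_\bullet\Gamma$ is genuinely compatible with multiplication by edges, and that the Morse matching (or the explicit homotopy) respects the homological and braid-index gradings simultaneously. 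It is this bookkeeping, rather than any single hard estimate, that carries the weight of the theorem.
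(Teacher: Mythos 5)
This statement is not proved in the paper at all: it is imported verbatim from \cite[Theorems~2.10 and 1.1]{ADCK2020}, and the text only records the definition of \(S_\bullet(\Gamma)\) in \eqref{eq:Swiatkowski} afterwards. So there is no internal proof to compare against; what can be assessed is whether your outline would actually yield the cited result. Stages (i) and (iv) are fine: stabilizations along distinct edges modify disjoint open edges, so they commute strictly and give a genuine \(\bbF[E]\)-action graded by braid index, and once one has a chain equivalence to a finitely generated free \(\bbF[E]\)-complex, Noetherianity of \(\bbF[E]\) finishes the argument. Your description of the generators and differential of \(S_\bullet\Gamma\) also matches \eqref{eq:Swiatkowski}.

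The genuine gap is in stage (ii). Abrams' discretized model computes \(B_k\Gamma\) only when \(\Gamma\) is sufficiently subdivided \emph{relative to \(k\)} (roughly, paths between essential vertices need at least \(k-1\) edges and cycles at least \(k+1\)). Since the object of the theorem is the total space \(B_\star\Gamma=\coprod_{k\ge0}B_k\Gamma\) with an action that \emph{raises} \(k\), no single finite subdivision is ``sufficiently subdivided in Abrams' sense'' for all braid indices at once, and a deformation retraction valid for \(B_k\Gamma\) need not be compatible with the stabilization map into \(B_{k+1}\Gamma\), which may already lie outside the valid range. This is precisely the difficulty that the subdivisional-spaces machinery of \cite{ADCK2019} was built to handle: one works with a homotopy colimit over all subdivisions (equivalently, with \'Swi\k{a}tkowski's local model, which is insensitive to subdivision in the right way), rather than with a fixed Abrams discretization. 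Your alternative suggestion --- explicit \(\bbF[E]\)-linear chain maps and contracting homotopies built from the local pieces \(S_\sfv\) --- is much closer to the actual argument, but as stated it is only a declaration of intent; the local-to-global gluing (a K\"unneth/Mayer--Vietoris argument over the decomposition of \(\Gamma\) into vertex stars and edges) is where the real work lies and is not supplied. As written, the proposal would not compile into a proof without replacing the fixed-subdivision step by that colimit argument.
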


The \'Swi\k{a}tkowski complex \(S_\bullet(\Gamma)\) is a free \(\bbF[E]\)-module defined by
\begin{align}\label{eq:Swiatkowski}
\left(S_\bullet(\Gamma),\partial\right) &= \bigotimes_{\sfv\in V} \left(S_\sfv,\partial_\sfv\right),&
S_\sfv &= \bbF[E]\langle \{\varnothing_\sfv, \sfv\}\cup \mathcal{H}_\sfv\rangle,&
\partial_\sfv(\sfg) &= 
\begin{cases}
0 & \text{if }\sfg=\varnothing_\sfv\text{ or }\sfv;\\
\sfe(\sfh)\varnothing_\sfv-\sfv & \text{if }\sfg=\sfh\in\mathcal{H}_\sfv,
\end{cases}
\end{align}
where \(\sfe(\sfh)\) is the edge containing \(\sfh\) for each half
edge \(\sfh\in\mathcal{H}_\sfv\) attached to \(\sfv\). Using a pair
consisting of the homological degree and the braid index, the
bigrading is given as follows: for each \(\sfv\in V\),
\(\sfh\in\mathcal{H}_\sfv\), and \(\sfe\in E\),
\begin{align*}
|\varnothing_\sfv|&=(0,0),&
|\sfv|&=(0,1),&
|\sfh|&=(1,1),&
&\text{and}&
|\sfe|&=(1,0).
\end{align*}

Since \(S_\bullet(\Gamma)\) is finitely generated over the Noetherian
ring \(\bbF[E]\), its Betti number is eventually given by a polynomial
whose degree comes from a combinatorial invariant \(\Delta_\Gamma^i\)
of \(\Gamma\), called \emph{Ramos's invariant}, defined as follows:
For each \(W\subseteq V\),
\begin{equation}\label{eq:ramos}
\Delta_\Gamma^W = |\pi_0(\Gamma\setminus W)|, \quad\text{ and }\quad
\Delta_\Gamma^i = \max_{W\in\binom{V^{\ess}}i} \Delta_\Gamma^W,
\end{equation}
where \(V^{\ess}\subseteq V\) is the set of essential vertices and
\(\binom{X}i\) is the set of subsets of \(X\) consisting of \(i\)
elements. In other words, \(\Delta_\Gamma^i\) is the maximum number of
connected components that can be obtained from \( \Gamma \) by
removing \( i \) essential vertices. For the sake of convenience, we
define \(\Delta_\Gamma^i=-\infty\) for each \(i>|V^{\ess}|\).

\begin{cor}\cite[Theorem~1.2]{ADCK2020}\label{corollary:degree}
  Let \(\Gamma=(V,E)\) be a graph. Then for each \(i\ge0\), the
  \(i\)-th Betti number \(\dim_\bbF H_i(B_k\Gamma)\) over a field
  \(\bbF\) is eventually a polynomial in \(k\) of degree
  \(\Delta_\Gamma^i-1\). In particular, \(\dim_\bbF H_i(B_k\Gamma)=0\)
  for each \(i>|V^{\ess}|\).
\end{cor}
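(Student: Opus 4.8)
The plan is to deduce the corollary from the finite generation in \Cref{thm:ADCK2020} together with the classical theory of Hilbert functions, the only genuinely graph-theoretic input being the identification of a Krull dimension with Ramos's invariant. The edge stabilizations \(\sfe\cdot(-)\colon B_k\Gamma\to B_{k+1}\Gamma\) preserve the homological degree \(i\) and raise the braid index \(k\) by one; hence, for each fixed \(i\), the group
\[
M_i\coloneqq H_i(B_\star\Gamma)=\bigoplus_{k\ge0}H_i(B_k\Gamma)
\]
is a \(\bbZ_{\ge0}\)-graded module over the standard graded polynomial ring \(\bbF[E]\), finitely generated by \Cref{thm:ADCK2020}. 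By the Hilbert--Serre theorem, its Hilbert function \(k\mapsto\dim_\bbF H_i(B_k\Gamma)\) then agrees with a polynomial \(P_\Gamma^i(k)\) for all \(k\gg0\), and the degree of \(P_\Gamma^i\) equals \(\dim_{\bbF[E]}M_i-1\), the Krull dimension of \(M_i\) minus one, where by convention the zero module contributes the zero polynomial. Thus the corollary is reduced to the single algebraic identity \(\dim_{\bbF[E]}M_i=\Delta_\Gamma^i\).

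To access this Krull dimension I would analyze the support of \(M_i\) inside \(\operatorname{Spec}\bbF[E]\). Using the explicit \'Swi\k{a}tkowski complex \eqref{eq:Swiatkowski}, whose differentials are carried by the edge variables, one shows that \(\operatorname{Supp}(M_i)\) is a union of coordinate subspaces, so that its minimal primes lie among the monomial primes \(\mathfrak{p}_S=(\,\sfe:\sfe\notin S\,)\) for \(S\subseteq E\). Since \(\dim\bbF[E]/\mathfrak{p}_S=|S|\), this gives
\[
\dim_{\bbF[E]}M_i=\max\{\,|S|:(M_i)_{\mathfrak{p}_S}\ne0\,\}.
\]
Localizing at \(\mathfrak{p}_S\) inverts the edge variables indexed by \(S\), which amounts to allowing particles to be pumped freely along those edges, so the task becomes the combinatorial one of maximizing \(|S|\) over the edge sets whose free filling leaves a nonzero homology class.

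The heart of the argument, and the step I expect to be the main obstacle, is the two-sided identification of this maximum with \(\Delta_\Gamma^i\). For the lower bound I would fix \(W\in\binom{V^{\ess}}i\) attaining \(\Delta_\Gamma^W=\Delta_\Gamma^i\) and produce an explicit nonzero class: activate one half-edge at each of the \(i\) vertices of \(W\), accounting for the homological degree \(i\), and distribute the remaining particles among the \(\Delta_\Gamma^W\) connected components of \(\Gamma\setminus W\). Taking one edge from each component as \(S\) yields \(|S|=\Delta_\Gamma^W\) with \((M_i)_{\mathfrak{p}_S}\ne0\), and the number of ways to apportion \(k\) particles among \(\Delta_\Gamma^W\) reservoirs forces growth of degree \(\Delta_\Gamma^W-1\). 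For the upper bound I would argue, after reducing the \'Swi\k{a}tkowski complex, that a class in homological degree \(i\) occupies half-edges at \(i\) distinct essential vertices and that particles lying in a common component of \(\Gamma\setminus W\) are homologically interchangeable; hence two edges of one component never give independent directions, and no surviving coordinate subspace exceeds \(\Delta_\Gamma^i\). The delicate point is to verify that the candidate classes are genuinely non-torsion, that is, survive the localization, and that the monomial differentials \eqref{eq:Swiatkowski} impose no cancellation collapsing the expected dimension; this is exactly where the explicit form of \(\partial_\sfv\) must be exploited.

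Finally, the ``in particular'' assertion is the degenerate case \(\Delta_\Gamma^i=-\infty\). When \(i>|V^{\ess}|\) there is no admissible \(W\in\binom{V^{\ess}}i\), so the support analysis above produces no nonzero localization and \(M_i=0\); equivalently, the reduced complex places every homology class in homological degree at most \(|V^{\ess}|\), giving \(\dim_\bbF H_i(B_k\Gamma)=0\) for all \(k\). This matches the convention \(\Delta_\Gamma^i=-\infty\) adopted in \eqref{eq:ramos}.
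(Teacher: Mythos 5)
The paper does not actually prove this statement: it is imported wholesale from \cite[Theorem~1.2]{ADCK2020}, with no argument given beyond the citation and \Cref{thm:ADCK2020}, so there is no internal proof to compare against line by line. Your opening reduction is nevertheless the correct and standard one: by \Cref{thm:ADCK2020}, each \(M_i=\bigoplus_{k\ge0}H_i(B_k\Gamma)\) is a finitely generated graded module over \(\bbF[E]\) with the edge variables raising the braid index by one, so the Hilbert--Serre theorem gives eventual polynomiality of \(k\mapsto\dim_\bbF H_i(B_k\Gamma)\) with degree equal to the Krull dimension of \(M_i\) minus one. This part is sound, and it also handles the ``in particular'' clause once one knows \(M_i=0\) for \(i>|V^{\ess}|\), which does follow from the reduced \'Swi\k{a}tkowski model (the paper itself invokes the bound on its homological degree in the proof of \Cref{prop:elementary}).

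The gap is that everything after this reduction --- the identity \(\dim_{\bbF[E]}M_i=\Delta_\Gamma^i\) --- is precisely the content of the cited theorem, and your treatment does not close it. First, the claim that \(\operatorname{Supp}(M_i)\) is a union of coordinate subspaces does not follow from the differentials being ``carried by the edge variables'': by \eqref{eq:Swiatkowski} the differential sends \(\sfh\mapsto\sfe(\sfh)\varnothing_\sfv-\sfv\), a binomial rather than a monomial, so \(M_i\) is not a quotient by a monomial submodule and its minimal primes need not be monomial primes a priori. Showing that the support is nonetheless a union of coordinate subspaces of dimensions \(\Delta_\Gamma^W\) requires the decomposition and filtration of the \'Swi\k{a}tkowski complex carried out in \cite{ADCK2020}; that is where the real work lives. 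Second, both bounds are asserted rather than proved: the lower bound needs the localized classes to be genuinely nonzero (you flag this yourself as the delicate point), and the upper bound's claim that particles in a common component of \(\Gamma\setminus W\) are ``homologically interchangeable'' is exactly the statement one must establish, not a reduction of it. As written, the proposal is an accurate road map to the proof in \cite{ADCK2020}, but not a proof.
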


\begin{defn}[Hilbert polynomials]
Let \(\Gamma=(V,E)\) be a graph. For each \(i\ge 0\), the \emph{\(i\)-th Hilbert polynomial} \(P_\Gamma^i(x)\) of the \(\bbF[E]\)-module \(H_i(B\Gamma)\) is the polynomial satisfying
\begin{equation}\label{eq:9}
P^i_\Gamma(k) = \dim_\bbF H_i(B_k\Gamma)
\end{equation}
for all large enough \(k\).
\end{defn}

\begin{remark}
  There is a computational result
  \cite{GCDC2019} on the smallest integer \( k_0 \) such that \eqref{eq:9}
  holds for all \(k\ge k_0\) for some small graphs. However, the Hilbert polynomial is unknown in general.
\end{remark}

On the other hand, the leading coefficient of \(P_\Gamma^i(k)\) can be explicitly computed from \(\Gamma\) as follows:
\begin{thm}\label{thm:2}\cite[Theorem~1.1]{ADCK2022}
Let \(\Gamma=(V,E)\) be a nontrivial graph.
For each \(i\ge 0\), if \(\Delta_\Gamma^i>0\), then the leading coefficient \( c_\Gamma^i \) of \(P_\Gamma^i(k)\) is given by 
\[
c_\Gamma^i=\frac1{(\Delta_\Gamma^i-1)!}\sum_{\substack{W\in \binom{V^{\ess}}i\\ \Delta_\Gamma^W=\Delta_\Gamma^i}} \prod_{\sfv\in W}(\deg(\sfv)-2),
\]
where \(W\) ranges over the sets of essential vertices of cardinality \(i\) such that \(|\pi_0(\Gamma\setminus W)|=\Delta_\Gamma^i\).
\end{thm}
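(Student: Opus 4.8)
The plan is to read off the leading coefficient as a Hilbert--Samuel multiplicity of the \(\bbF[E]\)-module \(M\coloneqq H_i(B_\star\Gamma)\), and then to evaluate that multiplicity locally at each top-dimensional associated prime by exploiting the tensor-product structure of the \'Swi\k{a}tkowski complex.

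First recall from \Cref{corollary:degree} that \(M\) is a finitely generated graded \(\bbF[E]\)-module whose Hilbert function \(k\mapsto\dim_\bbF H_i(B_k\Gamma)\) is eventually a polynomial of degree \(d-1\), where \(d=\Delta_\Gamma^i=\dim M\). For such a module the leading coefficient of the Hilbert polynomial equals \(e(M)/(d-1)!\), where \(e(M)\) is the multiplicity; by additivity of multiplicity along a prime filtration, \(e(M)=\sum_{\mathfrak{p}}\ell_{\bbF[E]_{\mathfrak{p}}}(M_{\mathfrak{p}})\cdot e(\bbF[E]/\mathfrak{p})\), the sum ranging over minimal primes \(\mathfrak{p}\) with \(\dim\bbF[E]/\mathfrak{p}=d\). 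Thus it suffices to (i) identify these top-dimensional primes together with their degrees, and (ii) compute each local length \(\ell(M_{\mathfrak{p}})\).

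The mechanism for both is the model \(S_\bullet(\Gamma)=\bigotimes_{\sfv\in V}(S_\sfv,\partial_\sfv)\) of \eqref{eq:Swiatkowski}. Each local complex \(S_\sfv\) is concentrated in homological degrees \(0\) and \(1\), with \(H_0(S_\sfv)\cong\bbF[E]/(\sfe(\sfh)-\sfe(\sfh'):\sfh,\sfh'\in\mathcal H_\sfv)\) supported on the linear subspace where all edges at \(\sfv\) agree, and \(H_1(S_\sfv)=\ker\partial_\sfv\) the syzygy module, which is torsion-free of generic rank \(\deg(\sfv)-2\). Feeding these into the K\"unneth formula, the contributions to \(H_i\) come from selecting the degree-\(1\) homology at \(i\) vertices and the degree-\(0\) homology at the rest; since \(H_1(S_\sfv)\) is nonzero only for essential \(\sfv\), these are indexed by subsets \(W\in\binom{V^{\ess}}{i}\), with term \(\bigotimes_{\sfv\in W}H_1(S_\sfv)\otimes\bigotimes_{\sfv\notin W}H_0(S_\sfv)\). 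The factors \(H_0(S_\sfv)\) for \(\sfv\notin W\) identify all edges sharing a vertex outside \(W\); the transitive closure of these identifications glues together exactly the edges lying in a common component of \(\Gamma\setminus W\). Hence this term is supported on the linear subspace \(L_W\) of dimension \(|\pi_0(\Gamma\setminus W)|=\Delta_\Gamma^W\), cut out by the linear forms \(\sfe-\sfe'\), so that the corresponding prime \(\mathfrak{p}_W\) satisfies \(e(\bbF[E]/\mathfrak{p}_W)=1\). Consequently only the top sets (those with \(\Delta_\Gamma^W=\Delta_\Gamma^i=d\)) contribute to the leading coefficient, and for each the local length is the product of local ranks, \(\ell(M_{\mathfrak{p}_W})=\prod_{\sfv\in W}(\deg(\sfv)-2)\); summing and dividing by \((d-1)!\) yields the stated formula.

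The main obstacle is controlling the K\"unneth correction (higher \(\operatorname{Tor}\)) terms, since over the polynomial ring \(\bbF[E]\) the homology of a tensor product is not simply the tensor product of homologies. I would handle this by localizing at \(\mathfrak{p}_W\) before taking homology: at the generic point of \(L_W\) the edge-variables incident to a vertex \(\sfv\in W\) take sufficiently independent values — this is precisely where the top-set hypothesis enters, guaranteeing that the presentation matrix of \(\partial_\sfv\) retains rank \(2\) — so that each \(H_1(S_\sfv)_{\mathfrak{p}_W}\) becomes free of rank \(\deg(\sfv)-2\); being flat, these factors annihilate all higher \(\operatorname{Tor}\), and the K\"unneth formula collapses to the clean product. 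Verifying this freeness, equivalently that the local syzygy module of the star-shaped boundary map localizes to a free module of the expected rank and that top-ness forbids degeneracy of the relevant edge-values, is the technical heart of the argument; the remainder is bookkeeping with dimensions of linear subspaces.
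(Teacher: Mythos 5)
First, be aware that the paper does not prove this statement: it is imported verbatim from \cite[Theorem~1.1]{ADCK2022}, and the only verification carried out in this paper is the corollary following \Cref{cor:1}, where the leading (equivalently, \(b\)-leading) coefficient is read off, for bunches of grapes only, from the decomposition formula of \Cref{theorem:decomposition formula} together with the closed form \eqref{eq:b} --- an elementary combinatorial route entirely different from yours. Your proposal, a Hilbert--Samuel multiplicity computation on the \'Swi\k{a}tkowski complex, is a reasonable outline for the general theorem, and its first paragraph (leading coefficient \(=e(M)/(\Delta_\Gamma^i-1)!\), additivity of multiplicity over top-dimensional primes) is sound.

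The execution, however, has two genuine gaps. (1) The assignment \(W\mapsto \mathfrak{p}_W\) is not injective on top sets, so the sum over minimal primes is not the sum over \(W\) appearing in the theorem. For \(\Gamma=K_5\) (or \(K_4\)) and \(i=1\), every singleton \(W=\{\sfv\}\) is a top set, yet all of them cut out the same diagonal line and hence the same prime; your bookkeeping would then return \(\operatorname{length}(M_{\mathfrak{p}})=\deg(\sfv)-2\) for that single prime, rather than the required \(\sum_{\sfv}(\deg(\sfv)-2)\). One must instead show that the local length at each top prime \(\mathfrak{p}\) equals the sum of \(\prod_{\sfv\in W}(\deg(\sfv)-2)\) over \emph{all} top sets \(W\) with \(L_W=V(\mathfrak{p})\). (2) The flatness mechanism you invoke to kill the higher \(\operatorname{Tor}\) terms rests on a false premise. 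At the generic point of \(L_W\) the edge variables at \(\sfv\in W\) are constant on each component of \(\Gamma\setminus W\) that they meet; when all edges at \(\sfv\) lie in a single component --- which does occur for top sets, e.g.\ \(W=\{\sfv\}\) in \(K_5\) --- the matrix of \(\partial_\sfv\) has rank \(1\), not \(2\), over the residue field, and \(H_1(S_\sfv)_{\mathfrak{p}_W}\) is the first syzygy module of a regular sequence of length \(\deg(\sfv)-1\), which is non-free as soon as \(\deg(\sfv)\ge 4\). Moreover the factors \(H_0(S_\sfu)_{\mathfrak{p}_W}\) for \(\sfu\notin W\) are cyclic modules cut out by linear forms and are never flat, so even granting freeness of the \(W\)-factors the K\"unneth spectral sequence does not collapse for free: you still owe an argument that every cross term involving \(\operatorname{Tor}_{>0}\), and every distribution of homological degrees other than ``\(H_1\) at \(W\), \(H_0\) elsewhere,'' is supported in dimension strictly less than \(\Delta_\Gamma^i\). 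These are precisely the points where the actual proof in \cite{ADCK2022} has to do real work, and the proposal does not supply them.
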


Since the asymptotic dimension of \(H_i(B_k\Gamma)\), as \(k\)
increases, is given by
\(c_\Gamma^i k^{\Delta_\Gamma^i-1}\), \Cref{thm:2} is
equivalent to
\[
  \lim_{k\to\infty} \frac{\dim_\bbF H_i(B_k\Gamma)}{k^{\Delta_\Gamma^i-1}} =c_\Gamma^i.
\]
Note also that, by \Cref{thm:2}, the coefficient
\(c_\Gamma^i\) counts the number of ways to choose a
half edge at each essential vertex in \(W\), avoiding two fixed
half edges. However, a combinatorial description of any coefficient in
\(P_\Gamma^i(k)\) other than the leading one is not yet known.

We define a quantity that measures the difference between the sequence of polynomials \(P_\Gamma^i(k)\) and the Betti numbers \(\dim_\bbF H_i(B_k\Gamma)\).
\begin{defn}[Residual polynomial]
Let \(\Gamma\) be a graph. The \emph{residual polynomial} \(\cR_\Gamma(x,y)\) is defined by
\[
\cR_\Gamma(x,y) = \cP_\Gamma(x,y) - \sum_{i,k\ge 0} P_\Gamma^i(k) x^i y^k.
\]
\end{defn}
By \Cref{corollary:degree}, the residual polynomial
\(\cR_\Gamma(x,y)\) is indeed a polynomial in \( x \) and \( y \).

\begin{exam}[The \(0\)-th Betti numbers]
Let \(\Gamma\) be a connected graph. Then \(\dim_\bbF H_0(B_k\Gamma)\equiv P_\Gamma^0(k)\equiv 1\) for all \(k\ge 0\).
\end{exam}

\begin{prop}[Elementary bunches of grapes]\label{prop:elementary}
  For \(\ell,m\ge 0\) with \(\ell+m\ge 1\) and \(i\ge 0\), the Hilbert
  polynomial \(P_{\Gamma_{\ell,m}}^i(k)\) and the residual polynomial
  \(\cR_{\Gamma_{\ell,m}}(x,y)\) for the elementary bunch of grapes
  \(\Gamma_{\ell,m}\) are given by
\begin{align}\label{equation:residual for elementary}
P^i_{\Gamma_{\ell,m}}(k)&=\begin{cases}
1 & \text{if }i=0;\\
\ell & \text{if }i=1, \ell+m=1;\\
N_{\ell,m}(k) & \text{if }i=1, \ell+m\ge 2;\\
0 & \text{if }i\ge 2,
\end{cases}&
\cR_{\Gamma_{\ell,m}}(x,y)&=\begin{cases}
-\ell & \text{if }\ell+m=1;\\
0 & \text{if }\ell+m\ge 2,
\end{cases}
\end{align}
where 
\begin{equation}\label{eq:first homology}
N_{\ell,m}(k)\coloneqq (2\ell+m-2)\binom {k+\ell+m-2}{\ell+m-1} - \binom{k+\ell+m-2}{\ell+m-2}+1.
\end{equation}
\end{prop}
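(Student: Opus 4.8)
The plan is to compute the bigraded homology $H_\bullet(B_\star\Gamma_{\ell,m})$ directly from the \'Swi\k{a}tkowski complex of \Cref{thm:ADCK2020} and then read off both $P^i_{\Gamma_{\ell,m}}$ and $\cR_{\Gamma_{\ell,m}}$ from the resulting Hilbert--Poincar\'e series. Write $R=\bbF[E]$, let $\sfv$ be the central vertex, let $\sfa_1,\dots,\sfa_\ell$ be the loops and $\sfb_1,\dots,\sfb_m$ the leaf-edges, and let $\sfw_1,\dots,\sfw_m$ be the leaves. Two of the four cases are immediate: since $\Gamma_{\ell,m}$ has at most one essential vertex, \Cref{corollary:degree} gives $\dim_\bbF H_i(B_k\Gamma_{\ell,m})=0$ for all $k$ once $i\ge2$, so both $P^i\equiv0$ and the degree-$i$ part of $\cR_{\Gamma_{\ell,m}}$ vanish there; and since $\Gamma_{\ell,m}$ is connected, $H_0(B_k\Gamma_{\ell,m})=\bbF$ for all $k\ge0$, giving $P^0\equiv1$ with no residual in degree $0$. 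The whole content is therefore the computation of $H_1$.

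For this I would first collapse the tensor product $S_\bullet(\Gamma_{\ell,m})=S_\sfv\otimes_R\bigotimes_{j=1}^m S_{\sfw_j}$ of \eqref{eq:Swiatkowski} onto its central factor, using the complex exactly as stated there, so that each loop is a single edge contributing its two half edges to $\mathcal{H}_\sfv$ and the only vertices are $\sfv$ and the leaves. Each leaf factor $S_{\sfw_j}$ is the two-term complex $R\langle\sfg_j'\rangle\to R\langle\varnothing_{\sfw_j},\sfw_j\rangle$ whose differential sends the half edge $\sfg_j'$ to $\sfb_j\varnothing_{\sfw_j}-\sfw_j$; this map is injective with free cokernel $R$, so a unipotent change of basis in degree $0$ exhibits $S_{\sfw_j}$ as $R$ in homological degree $0$ together with a contractible summand, i.e.\ $S_{\sfw_j}\simeq R$. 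Since chain homotopy equivalences are preserved by $\otimes_R$, we get $S_\bullet(\Gamma_{\ell,m})\simeq S_\sfv$ and hence $H_\bullet(B_\star\Gamma_{\ell,m})\cong H_\bullet(S_\sfv)$; in particular the homology is concentrated in degrees $0$ and $1$, reconfirming the vanishing for $i\ge2$.

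It remains to analyze the single map $\partial_\sfv\colon R\langle\mathcal{H}_\sfv\rangle\to R\langle\varnothing_\sfv,\sfv\rangle$, where the two half edges $\sfh_i^{+},\sfh_i^{-}$ of the loop $\sfa_i$ both satisfy $\partial_\sfv(\sfh_i^{\pm})=\sfa_i\varnothing_\sfv-\sfv$ and $\partial_\sfv(\sfg_j)=\sfb_j\varnothing_\sfv-\sfv$. The crux is to identify $H_1(S_\sfv)=\ker\partial_\sfv$ as a graded module. Since $\partial_\sfv$ sees $\sfh_i^{+},\sfh_i^{-}$ only through their sum, the $\ell$ loop classes $\sfh_i^{+}-\sfh_i^{-}$ span a free summand $R^\ell$ of the kernel (generators in braid degree $1$), which a change of basis splits off. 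The complementary part consists of the coefficient vectors $(r_\sfe)_{\sfe\in E}$ with $\sum_\sfe r_\sfe\,\sfe=0$ and $\sum_\sfe r_\sfe=0$; eliminating one coordinate identifies it with the first syzygy module of the linear forms $\{\sfe-\sfe_0:\sfe\ne\sfe_0\}$ for a fixed edge $\sfe_0$. These $\ell+m-1$ forms extend to a coordinate system of $R$, hence form a regular sequence, so the syzygy module is resolved by the Koszul complex and its Hilbert series is read off from the exact Koszul resolution of $R/(\sfe-\sfe_0)\cong\bbF[\sfe_0]$. Combining the free loop summand with the syzygy summand (each in braid degree $1$) and simplifying yields
\[
\sum_{k\ge0}\dim_\bbF H_1(B_k\Gamma_{\ell,m})\,y^k=\frac{(2\ell+m-1)y-1}{(1-y)^{\ell+m}}+\frac{1}{1-y},
\]
and extracting the coefficient of $y^k$ with $\binom{k+\ell+m-1}{\ell+m-1}$ and Pascal's rule produces exactly $N_{\ell,m}(k)$ of \eqref{eq:first homology}.

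Finally, to pin down $\cR_{\Gamma_{\ell,m}}$ I would compare this coefficient with the true Betti number for every $k$, not merely asymptotically. When $\ell+m\ge2$ the closed form $N_{\ell,m}(k)$ agrees with the series coefficient for all $k\ge0$; the only value needing care is $k=0$, where $N_{\ell,m}(0)=0=\dim_\bbF H_1(B_0\Gamma_{\ell,m})$. Hence $\dim_\bbF H_1(B_k\Gamma_{\ell,m})=P^1_{\Gamma_{\ell,m}}(k)$ already from $k=0$, so $\cR_{\Gamma_{\ell,m}}=0$. When $\ell+m=1$ the two exceptional graphs are handled directly: $\Gamma_{0,1}$ is an interval with contractible configuration spaces, so $P^1$ and $\cR_{\Gamma_{\ell,m}}$ both vanish; and $\Gamma_{1,0}$ is a circle, for which $B_k\Gamma_{1,0}\simeq\bbS^1$ when $k\ge1$ but $B_0\Gamma_{1,0}$ is a point, so $P^1\equiv\ell$ while the only discrepancy, at $k=0$ in homological degree one, contributes $-\ell$ and matches the stated residual. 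I expect the technical heart to be the exact computation of the graded module $\ker\partial_\sfv$—resolving the syzygy module, rather than merely extracting its leading term as in \Cref{thm:2}—together with the small-$k$ bookkeeping that certifies, in the essential case, that polynomiality already begins at $k=0$ and hence that $\cR_{\Gamma_{\ell,m}}$ vanishes.
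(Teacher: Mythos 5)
Your proposal is correct, and for the main case it takes a genuinely different route from the paper. The paper's proof is short: \(i=0\) is immediate, \(i\ge2\) follows from the fact that the \'Swi\k{a}tkowski complex of \(\Gamma_{\ell,m}\) is concentrated in homological degrees \(\le\min\{k,|V^{\ess}|\}\le1\), the case \(\ell+m=1\) is done by inspecting the interval and the circle, and the crucial case \(i=1\), \(\ell+m\ge2\) is simply quoted from Ko--Park \cite{KP2012}. You instead recompute \(H_1\) from scratch: collapsing the contractible leaf factors to reduce \(S_\bullet(\Gamma_{\ell,m})\) to the single vertex factor \(S_\sfv\), splitting \(\ker\partial_\sfv\) as a free summand \(R^\ell\) on the loop differences plus the first syzygy module of the regular sequence \(\{\sfe-\sfe_0\}\), and reading off the Hilbert series. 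I checked the bookkeeping: the series \(\bigl((2\ell+m-1)y-1\bigr)(1-y)^{-(\ell+m)}+(1-y)^{-1}\) is what the four-term exact sequence (or equivalently the Koszul resolution of \(\bbF[\sfe_0]\)) yields, and Pascal's rule converts its coefficients into \(N_{\ell,m}(k)\) for every \(k\ge0\), including \(k=0\). This self-containedness is a real gain: since the \'Swi\k{a}tkowski complex computes \(H_\bullet(B_k\Gamma)\) exactly for each braid index, your calculation certifies directly that polynomiality holds from \(k=0\) when \(\ell+m\ge2\), which is precisely what the vanishing of \(\cR_{\Gamma_{\ell,m}}\) requires, whereas the paper inherits this from the fact that Ko--Park's formula is valid for all \(k\ge0\). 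Your treatment of the exceptional graphs agrees with the paper's (note only that the discrepancy for the circle sits in bidegree \((i,k)=(1,0)\), so the residual is really \(-\ell x\); you describe the discrepancy correctly and the wording \(-\ell\) merely mirrors the statement).
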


\begin{proof}
  The second identity follows from the first identity. For the first
  identity, we consider the cases \( i=0 \), \( i=1 \), and
  \( i\ge2 \) as follows.

  If \( i=0 \), then \(\dim_\bbF H_0(B_k\Gamma_{\ell,m}) \equiv 1 \).

Let \( i=1 \). If \(\ell+m=1\), then \(\Gamma_{\ell,m}\) is
homeomorphic to either an interval when \((\ell,m)=(0,1)\) or a circle
when \((\ell,m)=(1,0)\). By the direct computation, we have
\(\dim_\bbF H_1(B_0\Gamma_{\ell,m})=0\) and
\(\dim_\bbF H_1(B_k\Gamma_{\ell,m})=\ell\) for \(k\ge 1\). Therefore
\(P_{\Gamma_{\ell,m}}^1(k)\equiv \ell\). If \(\ell+m\ge 2\), then it
is known by \cite{KP2012} that for any \(k\ge 0\),
\[
  \dim_\bbF H_1(B_k\Gamma_{\ell,m})=N_{\ell,m}(k).
\]

Now suppose \( i\ge2 \). For a connected graph \(\Gamma\) and
\(k\ge 0\), the configuration space \(B_k \Gamma\) is path-connected
and the chain complex \(C_\bullet(B_k\Gamma)\) is chain homotopy
equivalent to the \'Swi\k{a}tkowski complex \(S_\bullet\Gamma\) whose
degree is \(\min\{k,|V^\ess(\Gamma)|\}\) by \cite{ADCK2019}. Since
\(|V^\ess(\Gamma_{\ell,m})|\le 1\) for any \(\ell,m\ge 0\), we have
\[
\dim_\bbF H_i(B_k\Gamma_{\ell,m}) \equiv  0.\qedhere
\]
\end{proof}

\subsection{One-bridge decompositions and Betti numbers}
Suppose that \(\Gamma\) has an edge-\(m\)-cut
\(E_0=\{\sfe_1,\dots, \sfe_m\}\). Let \(\Gamma_1\) and \(\Gamma_2\) be
the two components in the \(m\)-bridge decomposition of \( \Gamma \)
along \(E_0\) as depicted in \Cref{figure:cutting}. Then by
\Cref{thm:ADCK2020} and \eqref{eq:Swiatkowski}, there are chain homotopy equivalences between
\(\bbF\)-modules
\[
S_\bullet\Gamma \simeq 
S_\bullet\Gamma_1\otimes_{\bbF[E_0]} S_\bullet\Gamma_2.
\]
If \(H_\bullet(B_\star\Gamma_1)\) or \(H_\bullet(B_\star\Gamma_2)\) is
flat over \(\bbF[E_0]\), then this yields a decomposition
\[
H_\bullet(B_\star\Gamma) \simeq H_\bullet(B_\star\Gamma_1)\otimes_{\bbF[E_0]} H_\bullet(B_\star\Gamma_2).
\]

For example, if \(m=0\), namely, \(\Gamma=\Gamma_1\sqcup\Gamma_2\),
then \(\bbF\) acts freely on both \(H_\bullet(B_\star\Gamma_1)\) and
\(H_\bullet(B_\star\Gamma_2)\), and we have the decomposition as
above, which is exactly the K\"unneth formula by
\Cref{lemma:facts}(3).

\begin{prop}\label{proposition:disjoint union}
Let \(\Gamma_1\) and \(\Gamma_2\) be graphs.
Suppose that \(\cR_{\Gamma_1}(x,y)=\cR_{\Gamma_2}(x,y)\equiv 0\).
Then \(\cR_{\Gamma_1\sqcup \Gamma_2}(x,y)\equiv0\) and moreover,
\[
P_{\Gamma_1\sqcup \Gamma_2}^i(k)=\sum_{\substack{i_1+i_2=i\\i_1,i_2\ge 0}}
\left(P_{\Gamma_1}^{i_1}*P_{\Gamma_2}^{i_2}\right)^+(k).
\]
\end{prop}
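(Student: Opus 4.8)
The plan is to push everything through the Hilbert--Poincar\'e series and reduce the statement to the generating-function identity already packaged in \Cref{lemma:product}. The crucial observation is that the hypothesis \(\cR_{\Gamma_j}(x,y)\equiv0\) says precisely that the Betti numbers of \(\Gamma_j\) agree with the Hilbert polynomials for \emph{every} \(k\ge0\), not merely asymptotically. Thus, as a formal power series,
\[
\cP_{\Gamma_j}(x,y)=\sum_{i,k\ge0}P_{\Gamma_j}^i(k)\,x^iy^k=\sum_{i\ge0}x^i\Bigl(\sum_{k\ge0}P_{\Gamma_j}^i(k)\,y^k\Bigr),
\]
with no residual correction. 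I would record this reduction first, since it is what lets the product of the polynomial parts recover the entire series rather than just its leading behavior.

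Next I would multiply the two series and track the two gradings separately. By \Cref{corollary:disjoint union} we have \(\cP_{\Gamma_1\sqcup\Gamma_2}=\cP_{\Gamma_1}\cdot\cP_{\Gamma_2}\), and expanding the product and collecting the coefficient of \(x^iy^k\) gives
\[
[x^iy^k]\,\cP_{\Gamma_1\sqcup\Gamma_2}(x,y)=\sum_{\substack{i_1+i_2=i\\i_1,i_2\ge0}}\ \sum_{\substack{k_1+k_2=k\\k_1,k_2\ge0}}P_{\Gamma_1}^{i_1}(k_1)\,P_{\Gamma_2}^{i_2}(k_2).
\]
The sum over the splits \(i_1+i_2=i\) is finite, so this is legitimate. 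For each fixed pair \((i_1,i_2)\), the inner sum over \(k_1+k_2=k\) is exactly the coefficient of \(y^k\) in the product of the one-variable series \(\sum_k P_{\Gamma_1}^{i_1}(k)y^k\) and \(\sum_k P_{\Gamma_2}^{i_2}(k)y^k\), which by \Cref{lemma:product} equals \(\bigl(P_{\Gamma_1}^{i_1}*P_{\Gamma_2}^{i_2}\bigr)^+(k)\). Hence
\[
\dim_\bbF H_i\bigl(B_k(\Gamma_1\sqcup\Gamma_2)\bigr)=\sum_{\substack{i_1+i_2=i\\i_1,i_2\ge0}}\bigl(P_{\Gamma_1}^{i_1}*P_{\Gamma_2}^{i_2}\bigr)^+(k)
\]
for every \(k\ge0\).

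Finally I would invoke uniqueness of the Hilbert polynomial. The right-hand side above is a finite sum of shifted convolutions, each a polynomial in \(k\) (both \((-)^+\) and \(*\) preserve polynomiality, by \Cref{defn:conv} and the definition of the shifted polynomial), so it is a single polynomial \(Q^i(k)\) that agrees with the Betti number for all \(k\ge0\), in particular for all large \(k\). By the defining property of the Hilbert polynomial, \(P_{\Gamma_1\sqcup\Gamma_2}^i=Q^i\), which is the claimed formula; and since \(Q^i(k)\) matches the Betti number for \emph{every} \(k\ge0\), the coefficient of each \(x^iy^k\) in \(\cR_{\Gamma_1\sqcup\Gamma_2}(x,y)\) vanishes, giving \(\cR_{\Gamma_1\sqcup\Gamma_2}\equiv0\).

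I do not expect a genuine obstacle: the argument is essentially bookkeeping. The only point requiring care is keeping the homological degree \(i\) and the braid index \(k\) separate when expanding the product, and applying \Cref{lemma:product} in the variable \(k\) only \emph{after} fixing the split \(i_1+i_2=i\). The one conceptual step to state cleanly is why the hypothesis \(\cR_{\Gamma_j}\equiv0\) is exactly what upgrades the asymptotic agreement of Betti numbers and Hilbert polynomials to an identity of power series, as this is what makes the computation above valid for all \(k\ge0\) rather than only in the stable range.
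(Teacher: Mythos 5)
Your argument is correct and follows essentially the same route as the paper's proof: apply \Cref{corollary:disjoint union}, use the hypothesis \(\cR_{\Gamma_1}=\cR_{\Gamma_2}\equiv0\) to replace Betti numbers by Hilbert polynomials for all \(k\), and then invoke \Cref{lemma:product} for each fixed split \(i_1+i_2=i\). The only cosmetic difference is that you extract the coefficient of \(x^iy^k\) from the two-variable series rather than working with the one-variable series \(\sum_k\dim_\bbF H_i(B_k(\Gamma_1\sqcup\Gamma_2))y^k\) directly, which changes nothing of substance.
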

\begin{proof}
We have
\begin{align*}
\sum_{k\ge 0}\dim_\bbF H_i(B_k(\Gamma_1\sqcup\Gamma_2))y^k &=
\sum_{\substack{i_1+i_2=i\\i_1,i_2\ge 0}}
\sum_{\substack{k_1+k_2=k\\k_1,k_2\ge 0}}
\dim_\bbF H_{i_1}(B_{k_1}\Gamma_1)\dim_\bbF H_{i_2}(B_{k_2}\Gamma_2) y^k
\tag{\Cref{corollary:disjoint union}}
\\
&=
\sum_{\substack{i_1+i_2=i\\i_1,i_2\ge 0}}
\sum_{\substack{k_1+k_2=k\\k_1,k_2\ge 0}} P_{\Gamma_1}^{i_1}(k_1)P_{\Gamma_2}^{i_2}(k_2) y^k
\tag{\(\cR_{\Gamma_1}(x,y)=\cR_{\Gamma_2}(x,y)\equiv 0\)}
\\
&=\sum_{\substack{i_1+i_2=i\\i_1,i_2\ge 0}}
\left(\sum_{k\ge 0}P_{\Gamma_1}^{i_1}(k) y^k\right)\cdot
\left(\sum_{k\ge 0}P_{\Gamma_2}^{i_2}(k) y^k\right)
\\
&=
\sum_{\substack{i_1+i_2=i\\i_1,i_2\ge 0}}
\sum_{k\ge 0} \left(P_{\Gamma_1}^{i_1}*P_{\Gamma_2}^{i_2}\right)^+(k) y^k
\tag{\Cref{lemma:product}}\\
&=
\sum_{k\ge 0}
\left(\sum_{\substack{i_1+i_2=i\\i_1,i_2\ge 0}}
\left(P_{\Gamma_1}^{i_1}*P_{\Gamma_2}^{i_2}\right)^+(k)\right) y^k.
\end{align*}
This implies both of the assertions.
\end{proof}

Let us consider a one-bridge decomposition. We first recall the following result about stabilization on a single edge.
\begin{prop}\cite[Proposition~5.21]{ADCK2019}\label{proposition:free action of leaves}
Let \(\Gamma=(V,E)\) be a graph. For any edge \(\sfe\in E\), the action \(\sfe\cdot(-):H_\bullet(B_\star\Gamma)\to H_\bullet(B_{\star}\Gamma)\) is injective.
\end{prop}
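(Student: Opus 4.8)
The plan is to carry out the entire argument inside the \'Swi\k{a}tkowski model. By \Cref{thm:ADCK2020}, the singular chain complex \(C_\bullet(B_\star\Gamma)\) is chain equivalent, \emph{as a complex of \(\bbF[E]\)-modules}, to the \'Swi\k{a}tkowski complex \(S_\bullet\Gamma\) of \eqref{eq:Swiatkowski}, and under this equivalence the edge stabilization \(\sfe\cdot(-)\) is identified with multiplication by the ring element \(\sfe\in\bbF[E]\). Since \(S_\bullet\Gamma\) is a \emph{free} \(\bbF[E]\)-module, it is in particular \(\sfe\)-torsion-free, so it suffices to prove that multiplication by \(\sfe\) is injective on the homology \(H_\bullet(S_\bullet\Gamma)\).

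First I would set up the short exact sequence of complexes
\[
0 \longrightarrow S_\bullet\Gamma \xrightarrow{\ \cdot\sfe\ } S_\bullet\Gamma \longrightarrow S_\bullet\Gamma/\sfe S_\bullet\Gamma \longrightarrow 0,
\]
where the left-hand map raises the braid index by one and is injective because \(S_\bullet\Gamma\) is free over \(\bbF[E]\) and \(\sfe\) is a non-zero-divisor in \(\bbF[E]\). Passing to the long exact sequence in homology and using that the image of the connecting homomorphism \(\delta\) is exactly \(\ker(\cdot\sfe)\), one sees that \(\cdot\sfe\) is injective on \(H_\bullet(S_\bullet\Gamma)\) if and only if \(\delta\equiv0\), and by exactness this holds if and only if the quotient map \(q_*\colon H_\bullet(S_\bullet\Gamma)\to H_\bullet(S_\bullet\Gamma/\sfe S_\bullet\Gamma)\) is surjective. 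Thus the whole problem reduces to lifting homology classes through \(q\).

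Next I would unwind this lifting problem concretely. Let \(\sfv\) and \(\sfw\) be the two endpoints of \(\sfe\) (with \(\sfv=\sfw\) if \(\sfe\) is a loop). In the quotient \(S_\bullet\Gamma/\sfe S_\bullet\Gamma\) the differential simplifies at these vertices: the half-edge generators \(\sfh_\sfv,\sfh_\sfw\) of \(\sfe\) now satisfy \(\partial\sfh_\sfv=-\sfv\) and \(\partial\sfh_\sfw=-\sfw\), since the terms \(\sfe\,\varnothing_\sfv\) and \(\sfe\,\varnothing_\sfw\) in \eqref{eq:Swiatkowski} vanish. Given a cycle \(\bar z\) representing a class in \(S_\bullet\Gamma/\sfe S_\bullet\Gamma\), choose any lift \(z\in S_\bullet\Gamma\); then \(\partial z=\sfe\,w\) for a unique chain \(w\), and \(w\) is automatically a cycle because \(\sfe\,\partial w=\partial^2 z=0\) and \(\sfe\) is a non-zero-divisor. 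One checks that \([w]=\delta[\bar z]\) in \(H_\bullet(S_\bullet\Gamma)\) and that \(\bar z\) lifts to an honest cycle precisely when \(w\) is a boundary. Hence the goal reduces to showing that \(w=\partial z/\sfe\) bounds for every such \(z\).

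The hard part is exactly this last step, namely producing a natural bounding chain for \(w\). The approach I would take is to construct an explicit chain-level section of \(q\) up to homotopy, i.e.\ a ``destabilization'' operator that removes a particle parked near the tail of \(\sfe\) and records its former presence by a factor of \(\sfe\); concretely one filters \(S_\bullet\Gamma\) by the power of \(\sfe\) appearing and builds the homotopy one filtration step at a time, using the local relations \(\partial\sfh_\sfv=\sfe\,\varnothing_\sfv-\sfv\) and \(\partial\sfh_\sfw=\sfe\,\varnothing_\sfw-\sfw\) at the endpoints to cancel \(w\). I expect the bookkeeping of signs and the interaction between the two endpoints (together with the degenerate loop case \(\sfv=\sfw\)) to be the main obstacle, since the homotopy must be compatible with the full \'Swi\k{a}tkowski differential \(\partial\) rather than with the two local factors \(S_\sfv\) and \(S_\sfw\) alone. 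An essentially equivalent route would be to prove directly that \(H_\bullet(S_\bullet\Gamma)\) is free over the subalgebra \(\bbF[\sfe]\subseteq\bbF[E]\), by exhibiting an \(\bbF[E\setminus\{\sfe\}]\)-linear complement \(C\) with \(H_\bullet(S_\bullet\Gamma)=\bigoplus_{j\ge0}\sfe^j C\); such a normal form gives injectivity immediately but leads back to the same combinatorial core.
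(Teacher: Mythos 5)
Note first that the paper does not prove this proposition at all: it is imported verbatim from \cite[Proposition~5.21]{ADCK2019}, so there is no internal proof to compare against. Judged on its own terms, your proposal contains a genuine gap. The reduction you carry out is correct and standard: passing to the \'Swi\k{a}tkowski model, forming the short exact sequence \(0\to S_\bullet\Gamma\xrightarrow{\cdot\sfe}S_\bullet\Gamma\to S_\bullet\Gamma/\sfe S_\bullet\Gamma\to0\), and observing that injectivity of \(\cdot\sfe\) on homology is equivalent to surjectivity of \(q_*\), i.e.\ to the statement that every cycle of the quotient complex lifts to a cycle of \(S_\bullet\Gamma\). But this is where the entire content of the proposition lives, and you explicitly leave it unproved: you describe an intended destabilization homotopy, anticipate that ``the bookkeeping of signs and the interaction between the two endpoints'' will be the main obstacle, and stop. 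Nothing so far uses any special feature of the \'Swi\k{a}tkowski complex beyond freeness of the underlying module, and freeness alone cannot suffice: the two-term complex \(\bbF[\sfe]\xrightarrow{\cdot\sfe}\bbF[\sfe]\) is free in each degree, yet multiplication by \(\sfe\) acts as zero (hence non-injectively) on its degree-zero homology \(\bbF[\sfe]/(\sfe)\). So the lifting claim genuinely requires the local structure of \(\partial_\sfv\) at the endpoints of \(\sfe\), and until the homotopy (or the normal form exhibiting \(H_\bullet(S_\bullet\Gamma)\) as a free \(\bbF[\sfe]\)-module) is actually constructed, the argument is a plan rather than a proof.

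For what it is worth, the actual proof in \cite{ADCK2019} does not attack an arbitrary edge head-on in the way you propose: it exploits invariance of the bigraded homology under subdivision to reduce to stabilization at an edge containing a degree-one vertex, where an explicit splitting of the stabilization map can be written down because a particle parked at a leaf interacts with nothing. If you want to complete your route, the cleanest fix is probably to follow that reduction rather than to build the filtration-by-powers-of-\(\sfe\) homotopy for a general edge, where the two endpoints (and the loop case \(\sfv=\sfw\)) do create real complications.
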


Therefore, for a one-bridge decomposition, the following holds.
\begin{cor}\label{corollary:1-bridge}
  Let \(\Gamma\) be a graph with an edge-one-cut \(\{\sfe\}\), and let
  \(\Gamma_1\) and \(\Gamma_2\) be the two components of the
  one-bridge decomposition along \(\sfe\). Then there is an isomorphism between \(\bbF\)-modules
\[
H_\bullet(B_\star\Gamma)\cong H_\bullet(B_\star\Gamma_1)\otimes_{\bbF[\sfe]} H_\bullet(B_\star\Gamma_2).
\]
In particular, for all \(i,k\ge0\), we have
\[
\dim_\bbF H_i(B_k\Gamma)
=\dim_\bbF H_i(B_k(\Gamma_1\sqcup \Gamma_2)) - \dim_\bbF H_i(B_{k-1}(\Gamma_1\sqcup \Gamma_2)),
\]
where \( \dim_\bbF H_i(B_{-1}(\Gamma_1\sqcup \Gamma_2))=0 \).
\end{cor}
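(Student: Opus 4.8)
The plan is to verify the flatness hypothesis behind the decomposition principle recorded just above the statement, and then to convert the resulting balanced tensor product over \(\bbF[\sfe]\) into an ordinary tensor product over \(\bbF\) by means of a length-one Koszul resolution.

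First I would treat the module isomorphism. Writing \(E_0=\{\sfe\}\), the base ring is the one-variable polynomial ring \(\bbF[\sfe]\), which is a PID. By \Cref{proposition:free action of leaves}, multiplication by \(\sfe\) is injective on \(M\coloneqq H_\bullet(B_\star\Gamma_1)\). The point I would stress is that injectivity of a single element does not by itself force torsion-freeness; here the braid-index grading rescues us, because \(\sfe\) raises the braid index by \(1\) and hence has positive degree. For a nonzero \(x\) and a nonzero \(p(\sfe)=\sum_j c_j\sfe^j\) with top term \(c_n\sfe^n\), comparing the top braid-index components of \(p(\sfe)x\) and using iterated injectivity of \(\sfe\) shows \(c_n\sfe^n x_{\mathrm{top}}\neq0\), so \(p(\sfe)x\neq0\); thus \(M\) is torsion-free, hence flat over the PID \(\bbF[\sfe]\). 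The decomposition principle then yields
\[
H_\bullet(B_\star\Gamma)\cong H_\bullet(B_\star\Gamma_1)\otimes_{\bbF[\sfe]}H_\bullet(B_\star\Gamma_2).
\]

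For the dimension formula, set \(N\coloneqq H_\bullet(B_\star\Gamma_2)\). By \Cref{lemma:facts}(3) and the Künneth theorem over the field \(\bbF\), I would identify \(M\otimes_\bbF N\cong H_\bullet(B_\star(\Gamma_1\sqcup\Gamma_2))\), under which the two copies \(\sfe_1,\sfe_2\) of the bridge in \(\Gamma_1\sqcup\Gamma_2\) act as \(\sfe\otimes1\) and \(1\otimes\sfe\). Regarding \(M\otimes_\bbF N\) as a module over \(\bbF[\sfe_1,\sfe_2]\cong\bbF[\sfe]\otimes_\bbF\bbF[\sfe]\) and applying the Koszul resolution
\[
0\to\bbF[\sfe_1,\sfe_2]\xrightarrow{\sfe_1-\sfe_2}\bbF[\sfe_1,\sfe_2]\to\bbF[\sfe]\to0
\]
computes \(\operatorname{Tor}_\bullet^{\bbF[\sfe]}(M,N)\) as the homology of \(\sfe_1-\sfe_2\) acting on \(M\otimes_\bbF N\). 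Since \(M\) is flat, \(\operatorname{Tor}_1^{\bbF[\sfe]}(M,N)\) vanishes, so \(\sfe_1-\sfe_2\) is injective and I obtain the short exact sequence
\[
0\to M\otimes_\bbF N\xrightarrow{\sfe_1-\sfe_2}M\otimes_\bbF N\to M\otimes_{\bbF[\sfe]}N\to0,
\]
whose last term is \(H_\bullet(B_\star\Gamma)\) by the first part. The map \(\sfe_1-\sfe_2\) raises the braid index by \(1\), so restricting to homological degree \(i\) and braid index \(k\) gives
\[
0\to H_i(B_{k-1}(\Gamma_1\sqcup\Gamma_2))\to H_i(B_k(\Gamma_1\sqcup\Gamma_2))\to H_i(B_k\Gamma)\to0,
\]
and taking \(\bbF\)-dimensions yields the asserted identity, the boundary case \(k=0\) matching the convention \(\dim_\bbF H_i(B_{-1}(\Gamma_1\sqcup\Gamma_2))=0\).

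The hard part will be the flatness step: \(M\) is finitely generated over \(\bbF[E(\Gamma_1)]\) but generally \emph{not} over the subring \(\bbF[\sfe]\), so the convenient ``finitely generated torsion-free over a PID is free'' shortcut is unavailable, and I must argue torsion-freeness directly from injectivity of \(\sfe\) together with the braid-index grading. Everything downstream—the Koszul computation of \(\operatorname{Tor}\), the vanishing of \(\operatorname{Tor}_1\), and the degreewise bookkeeping—is then routine homological algebra, the only care being to track the degree shift so that the source of \(\sfe_1-\sfe_2\) landing in braid index \(k\) is braid index \(k-1\).
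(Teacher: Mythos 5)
Your proposal is correct and follows essentially the same route as the paper: establish flatness of \(H_\bullet(B_\star\Gamma_1)\) over \(\bbF[\sfe]\) from the injectivity of the edge action (\Cref{proposition:free action of leaves}), invoke the tensor decomposition over \(\bbF[\sfe]\), and then identify the kernel of \(H_i(B_k(\Gamma_1\sqcup\Gamma_2))\to H_i(B_k\Gamma)\) with the braid-index-\((k-1)\) piece via the relation \(\sfe\cdot\alpha\otimes\beta-\alpha\otimes\sfe\cdot\beta\), which your Koszul complex for \(\sfe_1-\sfe_2\) packages formally. If anything, your version is slightly more careful at the two points where the paper is terse: you argue torsion-freeness directly from the braid-index grading rather than via the structure theorem for finitely generated modules over a PID (which does not literally apply, since the module need not be finitely generated over \(\bbF[\sfe]\)), and you justify the injectivity of \(\sfe_1-\sfe_2\) via the vanishing of \(\operatorname{Tor}_1\) rather than asserting the kernel isomorphism outright.
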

\begin{proof}
  Since \(\bbF[\sfe]\) is a PID, we can express
  \(H_\bullet(B_\star\Gamma_1)\) as a direct sum of a free module and
  a torsion module over \(\bbF[\sfe]\). Since the action of
  \(\sfe\) is injective by \Cref{proposition:free action of leaves}
  and it increases the number \(k\) of points by \(1\), there are no
  \(\bbF[\sfe]\)-torsion elements in \(H_\bullet(B_\star\Gamma_1)\).
  Therefore \(H_\bullet(B_\star\Gamma_1)\) is a free
  \(\bbF[\sfe]\)-module, and so the first assertion holds.

Note that since the action of \(\bbF[\sfe]\) preserves the homological degree,
\begin{equation}\label{eq:decomp}
H_i(B_\star\Gamma)\cong \bigoplus_{\substack{i_1+i_2=i\\i_1,i_2\ge 0}} H_{i_1}(B_\star\Gamma_1) \otimes_{\bbF[\sfe]} H_{i_2}(B_\star\Gamma_2).
\end{equation}
On the other hand, the canonical inclusion
\(\Gamma_1\sqcup\Gamma_2\to\Gamma\) induces the top horizontal map in
the following commutative diagram:
\[
\begin{tikzcd}
H_i(B_\star(\Gamma_1\sqcup\Gamma_2))\ar[r] \ar[d,"\cong"']& H_i(B_\star\Gamma)\ar[d,"\cong"]\\
\displaystyle\bigoplus_{\substack{i_1+i_2=i\\i_1,i_2\ge 0}} H_{i_1}(B_\star\Gamma_1)\otimes_\bbF H_{i_2}(B_\star\Gamma_2) \ar[r] &
\displaystyle\bigoplus_{\substack{i_1+i_2=i\\i_1,i_2\ge 0}} H_{i_1}(B_\star\Gamma_1)\otimes_{\bbF[\sfe]} H_{i_2}(B_\star\Gamma_2).
\end{tikzcd}
\]
Here, the vertical maps are given by \Cref{lemma:facts}(3) and
\Cref{eq:decomp}, and the bottom map is the canonical projection on each summand.

By definition of a tensor product, the kernel of the bottom map on
each direct summand is generated by the elements of the form
\(\sfe\cdot\alpha\otimes\beta - \alpha\otimes\sfe\cdot\beta\) for some
\(k_1',k_2'\ge 0\), \(\alpha\in H_{i_1}(B_{k_1'}\Gamma_1)\), and
\( \beta\in H_{i_2}(B_{k_2'}\Gamma_2) \). Moreover, if its total braid
index is \(k\), then \(k_1'+k_2'=k-1\). Therefore,
\[
\ker\left(H_i(B_k(\Gamma_1\sqcup\Gamma_2))\to H_i(B_k\Gamma)\right)\cong \begin{cases}
0 & \text{if }k=0;\\
H_i(B_{k-1}(\Gamma_1\sqcup\Gamma_2)) & \text{if }k\ge 1,
\end{cases}
\]
and we are done.
\end{proof}

\begin{prop}\label{proposition:1-bridge}
Let \(\Gamma\), \( \Gamma_1 \), \( \Gamma_2 \), and \(\sfe\) be as in \Cref{corollary:1-bridge}.
Suppose that \(\cR_{\Gamma_1}(x,y)=\cR_{\Gamma_2}(x,y)\equiv 0\).
Then 
\[
\cR_\Gamma(x,y)\equiv 0\quad\text{ and }\quad
P_{\Gamma}^i(k)=\sum_{\substack{i_1+i_2=i\\i_1,i_2\ge 0}}
\left(P_{\Gamma_1}^{i_1}*P_{\Gamma_2}^{i_2}\right)(k).
\]
\end{prop}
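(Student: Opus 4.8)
The plan is to reduce everything to the two results already established for this configuration: the exact Betti-number formula for the disjoint union $\Gamma_1\sqcup\Gamma_2$ from \Cref{proposition:disjoint union}, and the first-difference identity relating $\Gamma$ to $\Gamma_1\sqcup\Gamma_2$ from \Cref{corollary:1-bridge}. The conceptual bridge between them is the elementary observation that the shift operator $p\mapsto p^+$ is inverse to taking first differences of integer values, namely $p^+(k)-p^+(k-1)=p(k)$.

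First I would record that, since $\cR_{\Gamma_1}\equiv\cR_{\Gamma_2}\equiv 0$, \Cref{proposition:disjoint union} already gives $\cR_{\Gamma_1\sqcup\Gamma_2}\equiv 0$. Consequently the Betti numbers of the disjoint union agree with its Hilbert polynomials for \emph{every} $k\ge 0$:
\[
\dim_\bbF H_i(B_k(\Gamma_1\sqcup\Gamma_2)) = \sum_{\substack{i_1+i_2=i\\i_1,i_2\ge0}}\left(P_{\Gamma_1}^{i_1}*P_{\Gamma_2}^{i_2}\right)^+(k)\qquad\text{for all }k\ge0.
\]
Writing $R^i$ for the candidate polynomial $\sum_{i_1+i_2=i}P_{\Gamma_1}^{i_1}*P_{\Gamma_2}^{i_2}$, and using that the shift $(-)^+$ is $\bbF$-linear in the binomial-coefficient basis (so it commutes with the finite sum over $(i_1,i_2)$), the right-hand side is precisely $(R^i)^+(k)$.

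Next I would feed this into \Cref{corollary:1-bridge}, obtaining
\[
\dim_\bbF H_i(B_k\Gamma)=(R^i)^+(k)-(R^i)^+(k-1)
\]
for all $k\ge0$, with the convention $(R^i)^+(-1)=0$. At this point I invoke the identity $p^+(k)=p(0)+\cdots+p(k)$ recorded just after the definition of the shifted polynomial, together with $p^+(-1)=0$ (immediate from $\binom{0}{j+1}=0$), to conclude $(R^i)^+(k)-(R^i)^+(k-1)=R^i(k)$ for every $k\ge0$. Hence $\dim_\bbF H_i(B_k\Gamma)=R^i(k)$ holds for all $k\ge 0$, not merely asymptotically.

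Finally, because this equality holds for every $k\ge0$ and $R^i$ is a genuine polynomial, the Hilbert polynomial $P_\Gamma^i$ must equal $R^i$ and there is no residual discrepancy, yielding simultaneously $\cR_\Gamma\equiv0$ and the asserted formula $P_{\Gamma}^i(k)=\sum_{i_1+i_2=i}(P_{\Gamma_1}^{i_1}*P_{\Gamma_2}^{i_2})(k)$. The only points requiring care are bookkeeping ones: checking that $(-)^+$ commutes with the sum over $(i_1,i_2)$ (linearity) and that the boundary convention $\dim_\bbF H_i(B_{-1}(\Gamma_1\sqcup\Gamma_2))=0$ in \Cref{corollary:1-bridge} matches $(R^i)^+(-1)=0$. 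I do not anticipate a genuine obstacle; the homological substance has already been absorbed into \Cref{corollary:1-bridge}, so what remains is just to recognize the first difference of a shifted polynomial as the original polynomial.
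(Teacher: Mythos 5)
Your proposal is correct and follows essentially the same route as the paper: it combines \Cref{corollary:1-bridge} with \Cref{proposition:disjoint union} and then collapses the first difference via \(p^+(k)-p^+(k-1)=p(k)\), which is exactly the paper's argument (the paper merely packages the bookkeeping as a generating-function computation). The linearity of \((-)^+\) and the boundary convention you flag are handled the same way in the paper, so there is nothing to add.
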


\begin{proof}
By \Cref{corollary:1-bridge} and the assumption,
\begin{align*}
\sum_{k\ge 0}\dim_\bbF H_i(B_k\Gamma) y^k
&=\sum_{k\ge 0}\left(\dim_\bbF H_i(B_k(\Gamma_1\sqcup \Gamma_2)) - \dim_\bbF H_i(B_{k-1}(\Gamma_1\sqcup \Gamma_2))\right)y^k
\tag{\Cref{corollary:1-bridge}}
\\
&=
  \sum_{k\ge 0} P_{\Gamma_1\sqcup \Gamma_2}^i(k) y^k
  - \sum_{k\ge 1}  P_{\Gamma_1\sqcup \Gamma_2}^i(k-1) y^k
\tag{\Cref{proposition:disjoint union}}
\\
&=
\sum_{k\ge 0}
\left(\sum_{\substack{i_1+i_2=i\\i_1,i_2\ge 0}}
\left(P_{\Gamma_1}^{i_1}*P_{\Gamma_2}^{i_2}\right)^+(k)\right) y^k
- \sum_{k\ge 1}
\left(\sum_{\substack{i_1+i_2=i\\i_1,i_2\ge 0}}
\left(P_{\Gamma_1}^{i_1}*P_{\Gamma_2}^{i_2}\right)^+(k-1)\right) y^k
\tag{\Cref{proposition:disjoint union}}.
\end{align*}
Note that for any polynomial \( p(k) \), we have \( p^+(0) = p(0) \) and for \( k\ge1 \),
\[
  p^+(k) - p^+(k-1) = (p(0) + \cdots + p(k)) - (p(0) + \cdots + p(k-1)) = p(k).
\]
Hence, we obtain
\begin{align*}
\sum_{k\ge 0}\dim_\bbF H_i(B_k\Gamma) y^k
&=
\sum_{k\ge 0}
\left(
\sum_{\substack{i_1+i_2=i\\i_1,i_2\ge 0}}
\left(P_{\Gamma_1}^{i_1}*P_{\Gamma_2}^{i_2}\right)(k)\right)y^k,
\end{align*}
which implies both of the assertions.
\end{proof}

\section{The Hilbert polynomials for bunches of grapes}
\label{sec:hilbert polynomials}

Let \( \Gamma \) be a bunch of grapes. The \emph{stem} of \( \Gamma \)
is the subgraph \( \sfT \) obtained from \( \Gamma \) by removing all
cycles except their endpoints. Note that \(\Gamma\) can be recovered
up to homeomorphism\footnote{Indeed, up to subdivision on cycles.} by
attaching a certain number of loops at each vertex of \(\sfT\).
Therefore, \(\Gamma\) is completely characterized by the pair
\((\sfT, \loops)\), where \(\sfT\) is a tree called the \emph{stem} of
\(\Gamma\), and \(\loops:V(\sfT)\to\mathbb{Z}_{\ge 0}\) is a
nonnegative integer-valued function on the vertices of \(\sfT\), which
determines the graph obtained by attaching \(\loops(\sfv)\) loops,
called \emph{grapes}, at each vertex \(\sfv\) of \(\sfT\). We say that
\(\Gamma\) is \emph{of type} \((\sfT,\loops)\) and simply denote
\(\Gamma\cong(\sfT,\loops)\).

For example, the elementary bunch of grapes \(\Gamma_{\ell,m}\) in
\Cref{defn:elementary} is given by
\begin{equation}\label{eq:elementary}
\Gamma_{\ell,m}\cong (\Gamma_{0,m},f),\qquad
f(\sfv) = \begin{cases}
\ell & \text{if }\sfv\text{ is the essential vertex of }\Gamma_{0,m};\\
0 & \text{otherwise}.
\end{cases}
\end{equation}
The bunch of grapes \(\Gamma\) in the left of \Cref{figure:example of
  a bunch of grapes} is of type \((\sfT, \loops)\), as described in
\Cref{figure:stem and number of grapes}.
\begin{figure}
\[
\sfT=\begin{tikzpicture}[baseline=-.5ex]
\draw[thick,fill] (0,0) circle (2pt) node (A) {} node[above] {\(\mathsf{1}\)} -- ++(5:2) circle(2pt) node(B) {} node[above left] {\(\mathsf{0}\)} -- +(0,1.5) circle (2pt) node (C) {} node[left] {\(\mathsf{13}\)} +(0,0) -- +(0,-1.5) circle (2pt) node(D) {} node[left] {\(\mathsf{2}\)} +(0,0) -- ++(-10:1.5) circle(2pt) node(E) {} node[above] {\(\mathsf{3}\)} -- ++(10:1.5) circle(2pt) node(F) {} node[below] {\(\mathsf{7}\)} -- +(120:1) circle(2pt) node(G) {} node[above] {\(\mathsf{12}\)} +(0,0) -- +(60:1) circle (2pt) node(H) {} node[above] {\(\mathsf{11}\)} +(0,0) -- ++(-5:1) circle (2pt) node(I) {} node[below] {\(\mathsf{8}\)} -- +(30:1) circle(2pt) node(J) {} node[right] {\(\mathsf{10}\)} +(0,0) -- +(-30:1) circle (2pt) node(K) {} node[right] {\(\mathsf{9}\)};
\draw[thick, fill] (E.center) -- ++(0,-1) circle (2pt) node[left] {\(\mathsf{4}\)} -- +(-60:1) circle (2pt) node[right] {\(\mathsf{6}\)} +(0,0) -- +(-120:1) circle (2pt) node[left] {\(\mathsf{5}\)};
\end{tikzpicture}\qquad
\loops(\sfv)=\begin{cases}
0 & \mbox{if }\sfv\in \{\mathsf{0}, \mathsf{3}, \mathsf{4}, \mathsf{5}, \mathsf{6}, \mathsf{8}, \mathsf{10}\};\\
1 & \mbox{if }\sfv\in \{\mathsf{1}, \mathsf{2}, \mathsf{7}, \mathsf{9}, \mathsf{11}, \mathsf{12}\};\\
3 & \mbox{if }\sfv\in\{\mathsf{13}\}.
\end{cases}
\]
\caption{A stem \(\sfT\) with a root \(\sfv_0=\mathsf{0}\) and a function \(\loops\).}
\label{figure:stem and number of grapes}
\end{figure}

Let \(\Gamma\cong(\sfT,\loops)\in\Grape\) be a nontrivial bunch of
grapes. One important observation is that the set \(\Grape\) is closed
under \(1\)-bridge decomposition along any edge of a stem. That is,
for any \(\Gamma\in\Grape\) and an edge \(\sfe\) in the stem of
\(\Gamma\), the graphs \(\Gamma_1\) and \(\Gamma_2\) obtained by
cutting \(\Gamma\) along \(\sfe\) are contained in \(\Grape\) as well.

Note that both \(\Gamma_1\) and \(\Gamma_2\) are nontrivial if and
only if \(\sfe\) joins two essential vertices in \(\Gamma\).
Therefore, by taking the \(1\)-bridge decomposition of \(\Gamma\)
along every edge in its stem joining two essential vertices, we
obtain the \emph{decomposition of \(\Gamma\) along the stem} as seen
in \Cref{figure:decomposion along the stem}, where each component is
elementary and nontrivial.

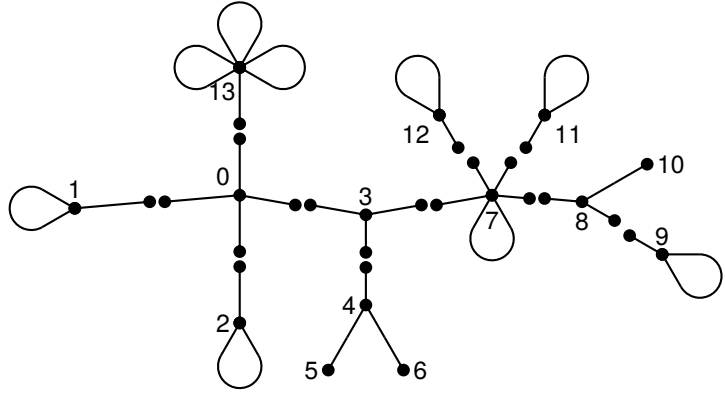
\begin{figure}
\[
\begin{tikzpicture}[baseline=-.5ex]
\draw[thick,fill] (0,0) circle (2pt) node (A) {} node[above] {\(\mathsf{1}\)} -- ++(5:1) circle (2pt);
\grape[180]{A};
\begin{scope}[xshift=0.2cm]
	\draw[thick,fill] (0,0) +(5:1) circle (2pt) -- ++(5:2) circle(2pt) node(B) {} node[above left] {\(\mathsf{0}\)} -- +(0,0.75) circle (2pt) +(0,0) -- +(0,-0.75) circle (2pt) +(0,0) -- ++(-10:0.75) circle (2pt);

	\begin{scope}[yshift=0.2cm]
		\draw[thick,fill] (0,0) ++(5:2) +(0,0.75) circle (2pt) -- +(0,1.5) circle (2pt) node (C) {} node[below=2ex, left=-.5ex] {\(\mathsf{13}\)};
		\grape[0]{C}; \grape[90]{C}; \grape[180]{C};
	\end{scope}

	\begin{scope}[yshift=-0.2cm]
		\draw[thick,fill] (0,0) ++(5:2) +(0,-0.75) circle (2pt) -- +(0,-1.5) circle (2pt) node (D) {} node[left] {\(\mathsf{2}\)};
		\grape[-90]{D};
	\end{scope}

	\begin{scope}[xshift=0.2cm]
		\draw[thick,fill] (0,0) ++(5:2) ++(-10:0.75) circle (2pt) -- ++(-10:0.75) circle(2pt) node(E) {} node[above] {\(\mathsf{3}\)} -- +(10:0.75) circle (2pt) +(0,0) -- +(0,-0.5) circle (2pt);

		\begin{scope}[yshift=-0.2cm]
			\draw[thick,fill] (0,0) ++(5:2) ++(-10:1.5) ++(0,-0.5) circle (2pt) -- ++(0,-0.5) circle (2pt) node[left] {\(\mathsf{4}\)} -- +(-60:0.5) +(0,0) -- +(-120:0.5);

			\begin{scope}[yshift=0cm]
				\draw[thick,fill] (0,0) ++(5:2) ++(-10:1.5) ++(0,-1) +(-120:0.5) -- +(-120:1) circle (2pt) node[left] {\(\mathsf{5}\)} +(-60:0.5) -- +(-60:1) circle (2pt) node[right] {\(\mathsf{6}\)};
			\end{scope}
		\end{scope}

		\begin{scope}[xshift=0.2cm]
			\draw[thick,fill] (0,0) ++(5:2) ++(-10:1.5) ++(10:0.75) circle(2pt) -- ++(10:0.75) circle(2pt) node(F) {} node[below=0.5ex] {\(\mathsf{7}\)} -- +(120:0.5) circle (2pt) +(0,0) -- +(60:0.5) circle (2pt) +(0,0) -- +(-5:0.5) circle (2pt);
			\grape[-90]{F};

			\begin{scope}[xshift=-0.2cm, yshift=0.2cm]
				\draw[thick,fill] (0,0) ++(5:2) ++(-10:1.5) ++(10:1.5) +(120:0.5) circle (2pt) -- +(120:1) circle (2pt) node(G) {} node[below left] {\(\mathsf{12}\)};
				\grape[120]{G};
			\end{scope}
			\begin{scope}[xshift=0.2cm, yshift=0.2cm]
				\draw[thick,fill] (0,0) ++(5:2) ++(-10:1.5) ++(10:1.5) +(60:0.5) circle (2pt) -- +(60:1) circle (2pt) node(H) {} node[below right] {\(\mathsf{11}\)};
				\grape[60]{H};
			\end{scope}

			\begin{scope}[xshift=0.2cm]
				\draw[thick,fill] (0,0) ++(5:2) ++(-10:1.5) ++(10:1.5) ++(-5:0.5) circle (2pt) -- ++(-5:0.5) circle (2pt) node(I) {} node[below] {\(\mathsf{8}\)} -- +(30:0.5) +(0,0) -- +(-30:0.5) circle (2pt);

				\begin{scope}[xshift=0cm]
					\draw[thick,fill] (0,0) ++(5:2) ++(-10:1.5) ++(10:1.5) ++(-5:1) +(30:0.5) -- +(30:1) circle (2pt) node[right] {\(\mathsf{10}\)};
				\end{scope}
				\begin{scope}[xshift=0.2cm,yshift=-.2cm]
					\draw[thick,fill] (0,0) ++(5:2) ++(-10:1.5) ++(10:1.5) ++(-5:1) +(-30:0.5) circle (2pt) -- +(-30:1) circle (2pt) node(K) {} node[above] {\(\mathsf{9}\)};
					\grape[-30]{K};
				\end{scope}
			\end{scope}
		\end{scope}
	\end{scope}
\end{scope}

\end{tikzpicture}
\]
\caption{The decomposition of \(\Gamma\) along the stem.}
\label{figure:decomposion along the stem}
\end{figure}

Hence, each component of the decomposition of \(\Gamma\) contains a
unique essential vertex \(\sfv\) of the stem \(\sfT\), and we denote
it by \(\Gamma_{\sfv}\), which is called the \emph{local graph near}
\(\sfv\). Then for each essential vertex \(\sfv\) in the stem
\(\sfT\), the homeomorphic type of the local graph \(\Gamma_\sfv\) is
given as \(\Gamma_{\loops(\sfv),m(\sfv)}\), where \(m(\sfv)\) is the
degree of \(\sfv\) in \(\sfT\).
\begin{remark}\label{rmk:degree}
Since \(\sfv\) is essential, we have \(2\ell(\sfv)+m(\sfv)\ge 3\), which implies that \(\ell(\sfv)+m(\sfv)\ge 2\) as well.
\end{remark}
For the example in \Cref{figure:decomposion
  along the stem}, the local graphs are as follows:
\begin{align*}
\Gamma_{\mathsf{0}} &= \Gamma_{0,4}, &
\Gamma_{\mathsf{1}} &= \Gamma_{\mathsf{2}}=\Gamma_{\mathsf{9}}=\Gamma_{\mathsf{11}}=\Gamma_{\mathsf{12}}=\Gamma_{1,1},&
\Gamma_{\mathsf{3}} &= \Gamma_{\mathsf{4}}=\Gamma_{\mathsf{8}}=\Gamma_{0,3},& &\text{and}&
\Gamma_{\mathsf{7}} &= \Gamma_{1,4}, &
\Gamma_{\mathsf{13}} &= \Gamma_{3,1}.
\end{align*}

\begin{remark}
  For any graph \(\Gamma\), the local graph \(\Gamma_\sfv\) near a
  vertex \(\sfv\) can be defined as follows. Considering \( \Gamma \)
  as a a topological space, \(\Gamma_\sfv\) is defined to be the
  mapping cone of
  \(f_\sfv:\mathcal{H}_\sfv\to \pi_0(\Gamma\setminus\{\sfv\})\), where
  \(\mathcal{H}_\sfv\) is the set of half edges adjacent to \(\sfv\)
  and \(f_\sfv(\sfh) = [\sfh\setminus\{\sfv\}]\) is the connected
  component of \(\sfh\setminus\{\sfv\}\) in
  \(\Gamma\setminus\{\sfv\}\).
\end{remark}

\begin{thm}\label{theorem:decomposition formula}
For each nontrivial bunch of grapes \(\Gamma\) and \(i\ge 0\),
we have \(\cR_\Gamma(x,y)\equiv0\) and 
\[
P^i_\Gamma(k) = 
\sum_{W\in \binom{V^{\ess}(\Gamma)}i} \left(\conv_{\sfv\in W} P_{\Gamma_\sfv}^1\right)(k).
\]
\end{thm}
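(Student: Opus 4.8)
The plan is to prove both assertions simultaneously by induction on the number $n=|V^{\ess}(\Gamma)|$ of essential vertices of $\Gamma$, peeling off one elementary piece at a time along the stem. For the base case $n=1$, the graph $\Gamma$ is homeomorphic to the elementary bunch of grapes $\Gamma_{\sfv}$ at its unique essential vertex $\sfv$, and $\loops(\sfv)+m(\sfv)\ge 2$ by Remark~\ref{rmk:degree}. \Cref{prop:elementary} then gives $\cR_\Gamma\equiv 0$, and the claimed formula reduces to three immediate checks: for $i=0$ the only index is $W=\varnothing$, whose empty convolution equals $\mathbf{1}$ and matches $P^0_\Gamma\equiv 1$; for $i=1$ the only index is $W=\{\sfv\}$, giving $P^1_{\Gamma_\sfv}=P^1_\Gamma$; and for $i\ge 2$ the index set $\binom{\{\sfv\}}{i}$ is empty, matching $P^i_\Gamma\equiv 0$.

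For the inductive step with $n\ge 2$, consider the tree whose vertices are the essential vertices of $\Gamma$ and whose edges record adjacency along the stem $\sfT$. Since this tree has at least two vertices, it has a leaf $\sfv_0$; I cut $\Gamma$ along a stem edge $\sfe$ lying on the unique path from $\sfv_0$ toward the other essential vertices. As the circumference of $\Gamma$ is at most $1$, every stem edge is a bridge, so $\{\sfe\}$ is an edge-one-cut, and the two resulting components are, up to homeomorphism, the elementary graph $\Gamma_1\cong\Gamma_{\sfv_0}$ and a bunch of grapes $\Gamma_2$ with $V^{\ess}(\Gamma_2)=V^{\ess}(\Gamma)\setminus\{\sfv_0\}$. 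Here I use that $P^i_\Gamma$ and $\cR_\Gamma$ are invariants of the homeomorphism type of $\Gamma$, so the degree-two vertices created by the cut are harmless, and that the local graphs $\Gamma_\sfv$ at the surviving essential vertices are unchanged, since cutting $\sfe$ only replaces one stem direction by a leaf at each endpoint and so preserves both stem degrees. By \Cref{prop:elementary} we have $\cR_{\Gamma_1}\equiv 0$ with $P^0_{\Gamma_1}=\mathbf{1}$, $P^1_{\Gamma_1}=P^1_{\Gamma_{\sfv_0}}$, and $P^{i_1}_{\Gamma_1}\equiv 0$ for $i_1\ge 2$; and by the inductive hypothesis $\cR_{\Gamma_2}\equiv 0$ together with the product formula for each $P^{i_2}_{\Gamma_2}$.

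Now \Cref{proposition:1-bridge} applies and yields $\cR_\Gamma\equiv 0$ and $P^i_\Gamma=\sum_{i_1+i_2=i}\left(P^{i_1}_{\Gamma_1}*P^{i_2}_{\Gamma_2}\right)$. Since $P^{i_1}_{\Gamma_1}$ vanishes for $i_1\ge 2$ and $\mathbf{1}$ is the identity for $*$, only $i_1\in\{0,1\}$ contribute, so $P^i_\Gamma=P^i_{\Gamma_2}+\left(P^1_{\Gamma_{\sfv_0}}*P^{i-1}_{\Gamma_2}\right)$. Expanding $P^i_{\Gamma_2}$ and $P^{i-1}_{\Gamma_2}$ by the inductive hypothesis and using that $*$ is distributive, associative, and commutative with identity $\mathbf{1}$, together with $\left(\conv_{\sfv\in W'}P^1_{\Gamma_\sfv}\right)*P^1_{\Gamma_{\sfv_0}}=\conv_{\sfv\in W'\cup\{\sfv_0\}}P^1_{\Gamma_\sfv}$ whenever $\sfv_0\notin W'$, the first summand runs over all $i$-subsets of $V^{\ess}(\Gamma)$ avoiding $\sfv_0$ and the second over all $i$-subsets containing $\sfv_0$. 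These two families partition $\binom{V^{\ess}(\Gamma)}{i}$, so the two summands recombine into the single sum in the statement, completing the induction.

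I expect the conceptual engine to be supplied entirely by \Cref{proposition:1-bridge} and \Cref{prop:elementary}, so the real work lies in two bookkeeping points: the geometric verification that cutting at a leaf $\sfv_0$ of the essential-vertex tree really splits $\Gamma$ into $\Gamma_{\sfv_0}$ and a bunch of grapes with the remaining local graphs intact, and the combinatorial verification that the two subset families partition $\binom{V^{\ess}(\Gamma)}{i}$. The latter reindexing is the crux of the recombination, but it is purely formal once the convolution identity above is recorded.
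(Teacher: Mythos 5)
Your proposal is correct and follows essentially the same route as the paper: induction on the number of essential vertices, using \Cref{prop:elementary} for the base case and \Cref{proposition:1-bridge} for the inductive step, then recombining via distributivity and commutativity of \(*\). The only (cosmetic) difference is that you always peel off an elementary piece at a leaf of the essential-vertex tree, whereas the paper cuts along an arbitrary stem edge joining two essential vertices and applies the induction hypothesis to both nontrivial halves.
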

\begin{proof}
We may assume that \(\Gamma\) has no vertices of degree 2 by smoothing them if necessary.
We will use induction on the number \(N\) of essential vertices in \(\Gamma\), noting that the case when \(N=1\) is done by \Cref{prop:elementary}.

Suppose that the assertion holds for \(N\), and assume that \(\Gamma\)
has \(N+1\) essential vertices. Let \(\sfe\) be a non-loop edge in \(\sfT\) joining two essential vertices of \(\Gamma\).
Let \(\Gamma_1\) and \(\Gamma_2\) be the components in the
\(1\)-bridge decomposition of \( \Gamma \) along \(\sfe\).
Then both \(\Gamma_1\) and \(\Gamma_2\) are nontrivial bunches of grapes, and by the induction hypothesis, we have \(\cR_{\Gamma_1}(x,y)\equiv\cR_{\Gamma_2}(x,y)\equiv 0\).
Hence by \Cref{proposition:1-bridge},
for each \(i\ge 0\),
\[
\cR_\Gamma(x,y)\equiv 0\quad\text{ and }\quad
P_{\Gamma}^i(k)=\sum_{\substack{i_1+i_2=i\\i_1,i_2\ge 0}}
\left(P_{\Gamma_1}^{i_1}*P_{\Gamma_2}^{i_2}\right)(k).
\]
Therefore, 
\begin{align*}
P_{\Gamma}^i(k)
&=\sum_{\substack{i_1+i_2=i\\i_1,i_2\ge 0}}
\left(
\sum_{W_1\in\binom{V^{\ess}(\Gamma_1)}{i_1}} \left(\conv_{\sfv\in W_1} P_{\Gamma_\sfv}^1\right) * 
\sum_{W_2\in\binom{V^{\ess}(\Gamma_2)}{i_2}} \left(\conv_{\sfv\in W_2} P_{\Gamma_\sfv}^1\right)
\right)(k)\tag{Induction hypothesis}\\
&=\sum_{\substack{i_1+i_2=i\\i_1,i_2\ge 0}}
\sum_{W_1\in\binom{V^{\ess}(\Gamma_1)}{i_1}}
\sum_{W_2\in\binom{V^{\ess}(\Gamma_2)}{i_2}}
\left(
\left(\conv_{\sfv\in W_1} P_{\Gamma_\sfv}^1\right) * 
\left(\conv_{\sfv\in W_2} P_{\Gamma_\sfv}^1\right)
\right)(k)\tag{\(*\) is distributive}\\
&=\sum_{\substack{i_1+i_2=i\\i_1,i_2\ge 0}}
\sum_{W_1\in\binom{V^{\ess}(\Gamma_1)}{i_1}}
\sum_{W_2\in\binom{V^{\ess}(\Gamma_2)}{i_2}}
\left(\conv_{\sfv\in W_1\sqcup W_2} P_{\Gamma_\sfv}^1\right)(k)\tag{\(*\) is commutative and associative}\\
&=\sum_{W\in\binom{V^{\ess}(\Gamma)}{i}}
\left(\conv_{\sfv\in W} P_{\Gamma_\sfv}^1\right)(k).
\tag{\(V^\ess(\Gamma)=V^\ess(\Gamma_1)\sqcup V^\ess(\Gamma_2)\)}
\end{align*}
Hence, the statement also holds for \( N+1 \), completing the proof.
\end{proof}

Let us denote the coefficient of \(\binom k j\) in \(P_{\Gamma_\sfv}^1(k)\) by \(b_{\sfv;j}^1 = b_{\ell(\sfv),m(\sfv);j}^1\),
that is,
\[
  P_{\Gamma_\sfv}^1(k) = \sum_{j\ge0}  b_{\sfv;j}^1 \binom{k}{j},
\]
where 
\begin{equation}\label{eq:b}
b_{\sfv;j}^1=\begin{cases}
(2\ell(\sfv)+m(\sfv)-2)\binom{\ell(\sfv)+m(\sfv)-2}{\ell(\sfv)+m(\sfv)-1-j}-\binom{\ell(\sfv)+m(\sfv)-2}{\ell(\sfv)+m(\sfv)-2-j} & \text{if }j\ge 1;\\
0 & \text{if }j=0.
\end{cases}
\end{equation}

One can express the coefficients of \(P_\Gamma^i(k)\) for each \(i\ge 0\) in terms of \(b_{\sfv,j}^i\)'s as follows:

\begin{cor}\label{cor:1}
For each nontrivial bunch of grapes \(\Gamma\) and \(i\ge 0\),
\[
P_{\Gamma}^i(k) = \sum_{j\ge 0}
\left(
\sum_{W\in\binom{V^{\ess}(\Gamma)}{i}}
\sum_{\substack{\vec j \in \bbZ_{\ge0}^W\\ \norm{\vec j}= j}} \prod_{\sfv\in W} b^1_{\sfv;j_\sfv}\right) \binom k j.
\]
\end{cor}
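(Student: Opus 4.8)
The plan is to derive \Cref{cor:1} purely formally from \Cref{theorem:decomposition formula} by expanding each convolution in the binomial basis \(\{\binom kj:j\ge0\}\) and then reorganizing the triple sum. The only real content is to compute the \(b\)-coefficients of a multi-fold \(*\)-product, after which everything reduces to interchanging finite sums and collecting the coefficient of each \(\binom kj\).

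First I would isolate the key combinatorial identity: for any finite set \(W\subseteq V^{\ess}(\Gamma)\),
\[
\left(\conv_{\sfv\in W} P_{\Gamma_\sfv}^1\right)(k)
=\sum_{j\ge0}\left(\sum_{\substack{\vec j\in\bbZ_{\ge0}^{W}\\\norm{\vec j}=j}}\prod_{\sfv\in W}b_{\sfv;j_\sfv}^1\right)\binom kj,
\]
where \(\vec j=(j_\sfv)_{\sfv\in W}\) and \(\norm{\vec j}=\sum_{\sfv\in W}j_\sfv\). I would prove this by induction on \(|W|\). The base case \(|W|=0\) is the convention \(\conv_{\sfv\in\varnothing}\equiv\mathbf 1\); here the only tuple is the empty one with \(\norm{\vec j}=0\) and empty product \(1\), so the right-hand side is \(\binom k0=1\), matching. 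The case \(|W|=1\) is precisely the defining expansion \(P_{\Gamma_\sfv}^1(k)=\sum_{j\ge0}b_{\sfv;j}^1\binom kj\). For the inductive step, write \(W=W'\sqcup\{\sfw\}\) and use associativity of \(*\) together with \Cref{defn:conv} applied to \(p_1=\conv_{\sfv\in W'}P_{\Gamma_\sfv}^1\) and \(p_2=P_{\Gamma_\sfw}^1\): by \eqref{equation:product} the coefficient of \(\binom kj\) in \(p_1*p_2\) is \(\sum_{j'+j_\sfw=j}(\text{coeff of }\binom k{j'}\text{ in }p_1)\,b_{\sfw;j_\sfw}^1\), and substituting the inductive expression for the coefficients of \(p_1\) exactly produces the sum over tuples \(\vec j\in\bbZ_{\ge0}^{W}\) with \(\norm{\vec j}=j\).

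Finally I would substitute this identity into the formula of \Cref{theorem:decomposition formula},
\[
P^i_\Gamma(k)=\sum_{W\in\binom{V^{\ess}(\Gamma)}i}\left(\conv_{\sfv\in W}P_{\Gamma_\sfv}^1\right)(k),
\]
and interchange the finite sum over \(W\) with the sum over \(j\) to collect the coefficient of each \(\binom kj\), yielding the stated expression. The computation is routine; the only point requiring care is the bookkeeping of the multi-index \(\vec j\) in the inductive step, namely ensuring that the pairing \((j',j_\sfw)\leftrightarrow\vec j\) is handled so that associativity of \(*\) genuinely refines the set of tuples with prescribed sum. I do not expect a substantial obstacle here, since \(*\) is associative, commutative, and distributive as recorded after \Cref{defn:conv}; the main subtlety is simply to state the base cases (including \(W=\varnothing\), which covers \(i=0\)) so that the induction is clean.
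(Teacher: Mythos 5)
Your proposal is correct and follows essentially the same route as the paper: the paper's proof is a one-line citation of \Cref{theorem:decomposition formula}, \eqref{equation:product}, and \Cref{prop:elementary}, and your induction on \(|W|\) merely spells out the routine expansion of the iterated \(*\)-product in the binomial basis that the paper treats as immediate. The only detail the paper flags that you omit is the appeal to \(\ell(\sfv)+m(\sfv)\ge 2\) (from \Cref{rmk:degree}) to justify using the formula \eqref{eq:b} for the coefficients \(b^1_{\sfv;j}\), but since you work directly with the defining expansion \(P^1_{\Gamma_\sfv}(k)=\sum_j b^1_{\sfv;j}\binom kj\) this is harmless.
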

\begin{proof}
This is a direct consequence of \Cref{theorem:decomposition formula}, \eqref{equation:product}, and \Cref{prop:elementary} since \(\ell(\sfv)+m(\sfv)\ge 2\) as observed in \Cref{rmk:degree}.
\end{proof}

\begin{remark}
  When \(i=0\), then \(W=\varnothing\) and there is a unique
  \(\vec j \in \bbZ_{\ge0}^W\) so that \(\norm{\vec j}=0\). Since the
  empty product \(\prod_{\sfv\in W}b_{\sfv;j_\sfv}^1\) is \( 1 \), we
  have \(P^i_\Gamma(k) = 1\) for any \(k\ge0\).
\end{remark}

In \Cref{sec:comb-descr-hilb}, we will give a combinatorial
description of all coefficients of \(P_{\Gamma}^i(k)\) using
\Cref{cor:1}. As a consequence of \Cref{cor:1}, we recover
\Cref{corollary:degree} and \Cref{thm:2} for the case when
\( \Gamma \) is a bunch of grapes.

\begin{cor}\cite{ADCK2020,ADCK2022}
  For each nontrivial bunch of grapes \(\Gamma\) and \(i\ge 0\), the
  leading term of \(P_\Gamma^i(k)\) is given by
  \(c_\Gamma^i k^{\Delta_\Gamma^i-1}\), where \(\Delta_\Gamma^i\) and
  \(c_\Gamma^i\) are given in \Cref{eq:ramos,thm:2}.
\end{cor}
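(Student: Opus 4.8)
The plan is to extract the leading term directly from the explicit coefficient formula in \Cref{cor:1}, thereby reducing the whole statement to a combinatorial identity for Ramos's invariant \(\Delta_\Gamma^W\) of a bunch of grapes. First I would analyze the single-vertex coefficients \(b^1_{\sfv;j}\) in \eqref{eq:b}. Writing \(n=\ell(\sfv)+m(\sfv)\), one reads off that \(b^1_{\sfv;j}=0\) whenever \(j\ge n\), since both binomial coefficients \(\binom{n-2}{n-1-j}\) and \(\binom{n-2}{n-2-j}\) vanish, while at \(j=n-1\) the second term dies and the first gives \(b^1_{\sfv;n-1}=2\ell(\sfv)+m(\sfv)-2=\deg(\sfv)-2\). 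As \(\sfv\) is essential this is strictly positive. Hence \(P^1_{\Gamma_\sfv}(k)\) has \(b\)-degree exactly \(\ell(\sfv)+m(\sfv)-1\), with positive \(b\)-leading coefficient \(\deg(\sfv)-2\).

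Next I would determine the degree of \(P^i_\Gamma\). By \Cref{cor:1} the coefficient of \(\binom kj\) is a sum of products \(\prod_{\sfv\in W}b^1_{\sfv;j_\sfv}\) over \(W\in\binom{V^{\ess}(\Gamma)}{i}\) and \(\vec j\in\bbZ_{\ge0}^W\) with \(\norm{\vec j}=j\); by the previous step such a product is nonzero only when \(j=\norm{\vec j}\le\sum_{\sfv\in W}(\ell(\sfv)+m(\sfv)-1)\), the top value being attained uniquely by \(j_\sfv=\ell(\sfv)+m(\sfv)-1\) for all \(\sfv\in W\), contributing \(\prod_{\sfv\in W}(\deg(\sfv)-2)>0\). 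Since all these top contributions are positive, they cannot cancel in the sum over \(W\); so the \(b\)-degree of \(P^i_\Gamma\) equals \(d:=\max_{W}\sum_{\sfv\in W}(\ell(\sfv)+m(\sfv)-1)\), and its coefficient of \(\binom kd\) is \(\sum_{W}\prod_{\sfv\in W}(\deg(\sfv)-2)\), the sum ranging over those \(W\) achieving the maximum \(d\).

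It remains to identify \(d\) and this coefficient with \(\Delta^i_\Gamma\) and \Cref{thm:2}, and the crux is the combinatorial identity
\[
\Delta_\Gamma^W=|\pi_0(\Gamma\setminus W)|=1+\sum_{\sfv\in W}\bigl(\ell(\sfv)+m(\sfv)-1\bigr)
\qquad \bigl(W\subseteq V^{\ess}(\Gamma)\bigr).
\]
I would prove it by counting the components of the topological removal \(\Gamma\setminus W\) (assuming, as we may by smoothing, that \(\Gamma\) has no vertex of degree \(2\)): each of the \(\ell(\sfv)\) grapes at a vertex \(\sfv\in W\) detaches into a single open arc, contributing \(\sum_{\sfv\in W}\ell(\sfv)\), and for the stem a forest count (vertices minus edges) gives \(|\pi_0(\sfT\setminus W)|=1+\sum_{\sfv\in W}(m(\sfv)-1)\). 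The delicate point, and the main obstacle, is keeping the topological rather than combinatorial removal straight: a stem-edge joining two vertices of \(W\) reappears as a separate open-arc component, and it is exactly these that cancel the \(e(W)\) contributions and make the final count independent of how the vertices of \(W\) sit inside the stem. Granting the identity, \(\Delta^W_\Gamma-1=\sum_{\sfv\in W}(\ell(\sfv)+m(\sfv)-1)\), so \(d=\Delta^i_\Gamma-1\) and the maximizing sets \(W\) are precisely those with \(\Delta^W_\Gamma=\Delta^i_\Gamma\). Converting the \(b\)-leading coefficient to the ordinary leading coefficient by dividing by \(d!=(\Delta^i_\Gamma-1)!\) then recovers exactly the formula of \Cref{thm:2}, while the degree statement recovers \Cref{corollary:degree}.
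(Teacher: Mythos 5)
Your proposal is correct and follows essentially the same route as the paper: read off from \eqref{eq:b} that \(b^1_{\sfv;j}\) vanishes for \(j\ge \ell(\sfv)+m(\sfv)\) and equals \(\deg(\sfv)-2>0\) at \(j=\ell(\sfv)+m(\sfv)-1\), feed this into \Cref{cor:1} to get the \(b\)-leading term, identify \(\sum_{\sfv\in W}(\ell(\sfv)+m(\sfv)-1)\) with \(\Delta_\Gamma^W-1\), and divide by \(d!\) to pass to the ordinary leading coefficient. The only difference is that you spell out the component count for \(\Gamma\setminus W\) (including the cancellation of the \(e(W)\) arcs from stem edges internal to \(W\)), which the paper compresses into the one-line remark that each essential vertex separates \(\Gamma\) into \(\ell(\sfv)+m(\sfv)\) components.
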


\begin{proof}
  We will prove the following equivalent statement: the
  \( b \)-leading term of \(P_\Gamma^i(k)\) is equal to
\begin{equation}\label{eq:11}
(\Delta_\Gamma^i-1)! c_\Gamma^i \binom k {\Delta_\Gamma^i-1}=
\left(\sum_{\substack{W\in \binom{V^{\ess}}i\\ \Delta_\Gamma^W=\Delta_\Gamma^i}} \prod_{\sfv\in W}(\deg(\sfv)-2)\right)\binom k {\Delta_\Gamma^i-1},
\end{equation}

For any vertex \(\sfv\in V^\ess(\Gamma)\), by \eqref{eq:b} and the
fact that \(\deg \sfv = 2\ell(\sfv)+m(\sfv)\), we have
\(b_{\sfv,j}^1=\deg \sfv - 2 \) if \(j=\ell(\sfv)+m(\sfv)-1\) and
\(b_{\sfv,j}^1=0\) if \(j\ge \ell(\sfv)+m(\sfv)\). Therefore, by
\Cref{cor:1}, each \(W\in \binom{V^\ess(\Gamma)}{i}\) contributes to
\( P_\Gamma^i(k) \) a polynomial of degree \( d_W \) whose
\( b \)-leading coefficient is \(\prod_{\sfv\in W}(\deg \sfv -2)>0\),
where
\[
  d_W := \sum_{\sfv\in W}(\ell(\sfv)+m(\sfv)-1).
\]
Since every essential vertex \(\sfv\) separates \(\Gamma\) into
\(\ell(\sfv)+m(\sfv)\) components, we have
\[
 d_W= \sum_{\sfv\in W}(\ell(\sfv)+m(\sfv)-1) = |\pi_0(\Gamma\setminus W)| -1 = \Delta_\Gamma^W-1.
\]

Hence,
\[
\deg P_\Gamma^i(k) = \max_{W\in\binom{V^\ess(\Gamma)}{i}}(\Delta_\Gamma^W-1) = \Delta_\Gamma^i-1,
\]
and the \( b \)-leading coefficient of \(P_\Gamma^i(k)\) is equal to
\[
\sum_{\substack{W\in \binom{V^{\ess}}i\\ \Delta_\Gamma^W=\Delta_\Gamma^i}} \prod_{\sfv\in W}(\deg(\sfv)-2).
\]
Therefore the \( b \)-leading term of \(P_\Gamma^i(k)\)
is equal to the right-hand side of \eqref{eq:11}, as desired.
\end{proof}

Observe that \Cref{cor:1} implies that the Hilbert polynomials
\(P_{\Gamma}^i(k)\) of \( \Gamma \) are independent of how the
elementary pieces of \(\Gamma\) are connected. Therefore, one may
decompose \( \Gamma \) along all edge-\(1\)-cuts and reconstruct it in
an arbitrary way without changing the sequence
\(\left(P^i_\Gamma(k)\right)_{i\ge0}\). Hence, the Hilbert polynomials
capture only the local data of \(\Gamma\), which can be defined
rigorously as follows: for \(\Gamma\cong(\sfT,\loops)\), the
\emph{local data} \(\cD(\Gamma)\) of \(\Gamma\) is the multiset of
pairs given by
\[
\cD(\Gamma)=\{(\loops(\sfv),m(\sfv)): \sfv\in V^{\ess}(\Gamma)\}.
\]
Then, for two bunches of grapes \(\Gamma_1\) and \(\Gamma_2\)
with \(\cD(\Gamma_1)=\cD(\Gamma_2)\), we have
\(P_{\Gamma_1}^i(k)\equiv P_{\Gamma_2}^i(k)\) for each \(i\ge 0\).
The following theorem states that the converse is also true.

\begin{thm}\label{thm:local data}
For \(\Gamma_1, \Gamma_2\in\Grape\), we have
\[
  \cD(\Gamma_1)=\cD(\Gamma_2)
  \quad \mbox{if and only if} \quad 
P_{\Gamma_1}^i(k)\equiv P_{\Gamma_2}^i(k) \quad\mbox{ for all } i\ge 0.
\]
\end{thm}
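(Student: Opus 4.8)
The forward implication is essentially already recorded before the statement: if $\cD(\Gamma_1)=\cD(\Gamma_2)$, then the two multisets of local graphs coincide, and \Cref{theorem:decomposition formula} (equivalently \Cref{cor:1}) expresses every $P^i_\Gamma(k)$ purely in terms of this multiset. So the plan is to establish the converse, namely that the whole family $(P^i_\Gamma)_{i\ge 0}$ determines $\cD(\Gamma)$.

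The first step is to repackage the decomposition formula as a genuine factorization. Writing $p_\sfv:=P^1_{\Gamma_\sfv}$ for each $\sfv\in V^{\ess}(\Gamma)$ and using that $*$ is commutative, associative, and has unit $\mathbf 1$, \Cref{theorem:decomposition formula} becomes
\[
\sum_{i\ge 0} P^i_\Gamma(k)\, x^i = \conv_{\sfv\in V^{\ess}(\Gamma)}\bigl(\mathbf 1 + x\, p_\sfv\bigr)(k),
\]
since expanding the $*$-product over the two-term factors reproduces the sum over subsets $W\subseteq V^{\ess}(\Gamma)$. Next I would pass to binomial-coefficient coordinates: by \Cref{defn:conv}, the assignment $p(k)=\sum_j b_j\binom kj\mapsto \hat p(z):=\sum_j b_j z^j$ is a $\bbQ$-algebra isomorphism from the space of polynomials in $k$ under $(+,*)$ onto $(\bbQ[z],+,\cdot)$, carrying $\binom kj$ to $z^j$ and $\mathbf 1$ to $1$. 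Applying this isomorphism converts the identity above into an honest product of two-variable polynomials,
\[
F_\Gamma(x,z):=\sum_{i\ge 0}\hat P^i_\Gamma(z)\, x^i = \prod_{\sfv\in V^{\ess}(\Gamma)}\bigl(1 + x\, g_\sfv(z)\bigr),
\qquad g_\sfv:=\hat p_\sfv .
\]
Thus knowing all $P^i_\Gamma$ is the same as knowing $F_\Gamma(x,z)$, and $\hat P^i_\Gamma(z)$ is precisely the $i$-th elementary symmetric polynomial of the multiset $\{g_\sfv(z)\}_\sfv$.

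The heart of the argument is then a uniqueness-of-factorization step. By \eqref{eq:b} and \Cref{rmk:degree}, each $g_\sfv(z)$ is a nonzero polynomial of degree $\loops(\sfv)+m(\sfv)-1\ge 1$, so $-1/g_\sfv(z)$ is a well-defined element of the field $\bbQ(z)$. Viewing $F_\Gamma(x,z)$ as a polynomial in $x$ over $\bbQ(z)$, it is the product of the linear factors $1+x\,g_\sfv(z)$, so its multiset of roots in $\bbQ(z)$ is exactly $\{-1/g_\sfv(z)\}_\sfv$; this multiset is determined by $F_\Gamma$ (with cardinality $|V^{\ess}(\Gamma)|$ recovered as $\deg_x F_\Gamma$), and hence so is the multiset $\{g_\sfv(z)\}_\sfv$. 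It remains to recover the local datum from each factor: by \eqref{eq:b} the polynomial $g_\sfv(z)$ has degree $\loops(\sfv)+m(\sfv)-1$ and leading coefficient $2\loops(\sfv)+m(\sfv)-2=\deg(\sfv)-2$, and since the linear map $(\loops,m)\mapsto(\loops+m,\,2\loops+m)$ is invertible over $\bbZ$, the pair $(\loops(\sfv),m(\sfv))$ is recovered from $g_\sfv$. Running this over every element of $\{g_\sfv(z)\}_\sfv$ yields $\cD(\Gamma)$, which proves the converse.

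The hard part will be the middle step rather than any computation: one must verify carefully that the binomial-coefficient transform genuinely intertwines $*$ with ordinary multiplication (so that $\sum_i \hat P^i_\Gamma x^i$ literally \emph{factors} into linear-in-$x$ pieces), and that each $g_\sfv$ is a nonzero polynomial of positive degree. These two facts are exactly what make the passage from the product $F_\Gamma$ back to the multiset $\{g_\sfv\}$ legitimate via Vieta's formulas; everything after that is bookkeeping on degrees and leading coefficients.
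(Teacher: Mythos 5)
Your proof is correct, but it takes a genuinely different route from the paper's. The paper's argument is an iterative peeling procedure: it sets \(d_1=\max_{\sfv}\deg P^1_{\Gamma_\sfv}\), observes that for \(i\le n_1\) the \(b\)-leading coefficient of \(P^i_\Gamma\) is the \(i\)-th elementary symmetric function of the quantities \(\loops(\sfv_j)-1+d_1\) over the vertices attaining that maximal degree, recovers those local data by Vieta's formulas over \(\bbQ\), subtracts the corresponding summands from \(P^1_\Gamma\), and repeats with the next-largest degree, and so on. You instead note that the binomial transform \(\sum_j b_j\binom{k}{j}\mapsto\sum_j b_j z^j\) intertwines \(*\) with ordinary multiplication (which is immediate from \Cref{defn:conv}, since \(\binom{k}{j}*\binom{k}{j'}=\binom{k}{j+j'}\)), so that \(\sum_i \hat P^i_\Gamma(z)\,x^i\) literally factors as \(\prod_{\sfv}(1+x\,g_\sfv(z))\), and you recover the entire multiset \(\{g_\sfv\}\) in a single step from unique factorization of this polynomial in \(x\) over the field \(\bbQ(z)\); the needed nonvanishing of each \(g_\sfv\) follows, as you say, from the positivity of the top coefficient \(2\loops(\sfv)+m(\sfv)-2\) in \eqref{eq:b} together with \Cref{rmk:degree}. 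Your version is more global and recovers strictly more at the intermediate stage (the full polynomials \(P^1_{\Gamma_\sfv}\), not merely their degrees and \(b\)-leading coefficients), after which extracting \((\loops(\sfv),m(\sfv))\) from the invertible correspondence \((\loops,m)\mapsto(\loops+m,2\loops+m)\) is the same bookkeeping the paper does. The paper's version avoids introducing the auxiliary variable \(z\) and the field \(\bbQ(z)\) but pays for it with the degree-stratified iteration; both arguments ultimately rest on \Cref{theorem:decomposition formula} and a Vieta-type uniqueness statement, and both are complete.
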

\begin{proof}
  The ``only if'' part has already been established. For the ``if''
  part, let \(\Gamma\in\Grape\). We need to show that
  \( \cD(\Gamma) \) is uniquely determined by the polynomials
  \( P_{\Gamma}^i(k)\), \( i\ge0 \). Recall from
  \Cref{equation:product}, the \( b \)-leading coefficient of
  \( (p_1 * p_2)(k) \) is the product of the \( b \)-leading
  coefficients of \( p_1(k) \) and \( p_2(k) \).

  Let \( P_1(k) = P_{\Gamma}^1(k) \) and \(d_1 = \deg P_1(k)\). Then,
  by \Cref{theorem:decomposition formula},
\begin{equation}\label{eq:6}
P_1(k) = P_\Gamma^1(k)= \sum_{\sfv\in V^\ess(\Gamma)} P_{\Gamma_\sfv}^1(k).
\end{equation}
By \eqref{eq:first homology}, for any essential vertex \( \sfv \), we
have \(\deg P_{\Gamma_\sfv}^1(k) = \loops(\sfv)+m(\sfv)-1\) and the
\( b \)-leading coefficient of \(P_{\Gamma_\sfv}^1(k)\) is
\( 2\loops(\sfv)+m(\sfv)-2 \). Therefore,
\[
d_1 = \max\{\deg P_{\Gamma_\sfv}^1(k):\sfv\in V^\ess(\Gamma)\}=\max\{\loops(\sfv)+m(\sfv)-1: \sfv\in V^\ess(\Gamma)\}.
\]

Define \(n_1=\max\{i: \deg P_{\Gamma}^i(k)=i\cdot d_1\}\). Then, by \Cref{theorem:decomposition formula} and
 the definitions of \(d_1\) and
\(n_1\), there are exactly \(n_1\) essential vertices, say
\(\sfv_1,\dots, \sfv_{n_1}\), such that
\[
  \deg P_{\Gamma_{\sfv_j}}^1(k) = d_1 = \loops(\sfv_j)+m(\sfv_j)-1.
\]
Since the \( b \)-leading coefficient of \( P_{\Gamma_{\sfv_j}}^1(k) \) is
\( 2\loops(\sfv_j)+m(\sfv_j)-2=\loops(\sfv_j)-1+d_1 \), by
  \Cref{theorem:decomposition formula} again, for each
  \(1\le i\le n_1\), the \( b \)-leading coefficient \( s_i \) of
  \(P^i_{\Gamma}(k)\) is given by
\[
  s_i= \sum_{1\le j_1<\dots<j_i\le n_1} (\loops(\sfv_{j_1})-1+d_1)\cdots(\loops(\sfv_{j_i})-1+d_1).
\]
Thus \( \loops(\sfv_{1})-1+d_1,\dots,\loops(\sfv_{n_1})-1+d_1 \) are the roots of
\[
  x^{n_1}- s_1 x^{n_1-1}+  s_2 x^{n_1-2} -\cdots+(-1)^{n_1}  s_{n_1}=0.
\]
This implies that the local data at \(\sfv_1,\dots, \sfv_{n_1}\) are uniquely determined by \( P_{\Gamma}^1(k) \).

Now let \( P_2(k) = P_1(k) - \sum_{i=1}^{n_1} P_{\Gamma_{\sfv_i}}^1(k) \) and
\(d_2=\deg P_2(k)\). By \eqref{eq:6},
\[
  P_2(k) = \sum_{\sfv\in V^\ess(\Gamma)\setminus\{\sfv_1,\dots,\sfv_{n_1}\}} P_{\Gamma_\sfv}^1(k).
\]
Let \(n_2=\max\{i: \deg P_{\Gamma}^{n_1+i}(k)=n_1d_1+id_2\}\). Then
there are \(n_2\) essential vertices \( \sfv'_1,\dots,\sfv'_{n_2} \)
in \( V^\ess(\Gamma)\setminus\{\sfv_1,\dots,\sfv_{n_1}\} \)
such that
\[
  \deg P_{\Gamma_{\sfv'_j}}^1(k) = d_2 = \loops(\sfv'_j)+m(\sfv'_j)-1.
\]
By the similar argument as above, the local data for these vertices
are uniquely determined by \( P_{\Gamma}^1(k) \).

By continuing this process, one can recover the whole local data \(\cD(\Gamma)\) as claimed.
\end{proof}

\section{A combinatorial description for the Hilbert polynomial}
\label{sec:comb-descr-hilb}

In this section, we give combinatorial descriptions for the Hilbert
polynomial \(P_{\Gamma}^i(k)\) and its coefficients for a bunch of
grapes \(\Gamma\cong(\sfT,\ell)\) when \(\sfT\) has at least one edge.
If \(\sfT\) has no edges, then it is a singleton, and therefore
\(\Gamma\) is homeomorphic to a bouquet of circles, which will be
considered separately in \Cref{appendix}. We first consider the case
that \( \Gamma \) has a unique essential vertex.

\subsection{HE-configurations and SHE-configurations}\label{sec:HE and SHE}
Throughout this subsection, we consider the elementary bunch of grapes
\( \Gamma_{\ell,m} \), which is a graph with a unique essential vertex
\( \sfv \), \( \ell \) loops, and \( m \) non-loops. Here, a
\emph{non-loop} is an edge that is not a loop. We assume that
\(m\ge 1\). Since \(\sfv\) is an essential vertex, we have
\(2\ell+m\ge 3\) and \(\ell+m\ge 2\).

By regarding \(\Gamma\) as a CW complex, each edge \(\sfe\) is defined
as a map \(\sfe:[0,1]\to\Gamma\) whose restriction to \( (0,1) \) is
an embedding of \( (0,1) \) onto the interior of \( \sfe \). Then the two
restrictions \(\sfh^1(\sfe)\) and \(\sfh^2(\sfe)\) of \(\sfe\) to
\([0,1/3]\) and \([2/3,1]\) will be called the \emph{first} and
\emph{second half edges} of \(\sfe\), respectively. We require that the first half
edge \(\sfh^1(\sfe)\) must be attached to the central vertex \(\sfv\).
From now on, we will only consider the half edges attached to
\( \sfv \), that is, for a non-loop \(\sfe\), we will not use the
second half edge \(\sfh^2(\sfe)\).

Note that \( \Gamma_{\ell,m} \) has \(\ell+m \) edges
\(\sfe_1,\dots,\sfe_{\ell+m}\) and \( 2\ell+m \) half edges
\(\sfh_1,\dots,\sfh_{2\ell+m}\) attached to \(\sfv\). 

From now on, we assume the following:
\begin{enumerate}
\item The edge \(\sfe_1\) is a non-loop edge. (This is always possible since \( m\ge1 \).)
\item The two half edges \(\sfh^1(\sfe_i)\) and \(\sfh^2(\sfe_i)\) for
  each loop \(\sfe_i\) are consecutive. Namely, there exists
  \(1<j<2\ell+m\) such that \(\sfh_j=\sfh^1(\sfe_i)\) and
  \(\sfh_{j+1}=\sfh^2(\sfe_i)\).
\end{enumerate}
We call \(\sfe_1\) the \emph{pivot edge} of \(\Gamma_{\ell,m}\). We
denote by \(*\) the vertex of the pivot edge other than \(\sfv\).

Now we fix an embedding of \(\Gamma_{\ell,m}\) into \(\bbR^2\) so that the half edges \( \sfh_1,\dots,\sfh_{2\ell+m} \) are aligned counterclockwise.
See \Cref{fig:image16}. 
We may omit the labels for all edges and half edges since there is no ambiguity to recover them from the picture by specifying the pivot edge.

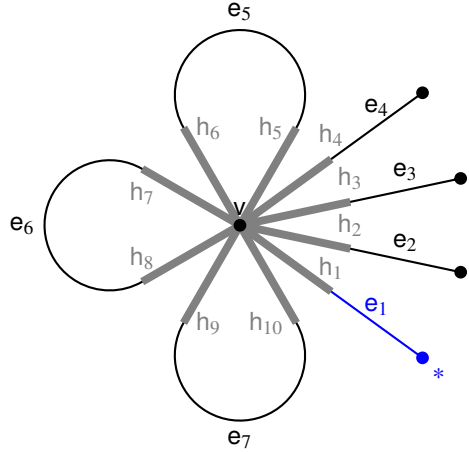
\begin{figure}
  \centering
\begin{tikzpicture}[baseline=-.5ex]
\draw[thick, fill, blue] 
(0,0) -- node[near end, above] {\(\sfe_1\)} (-36:3) circle(2pt) node[below right] {\(*\)};
\draw[thick, fill] 
(0,0) -- node[near end, above] {\(\sfe_2\)} (-12:3) circle(2pt) 
(0,0) -- node[near end, above] {\(\sfe_3\)} (12:3) circle(2pt) 
(0,0) -- node[near end, above] {\(\sfe_4\)} (36:3) circle (2pt);
\draw[line width=3, gray] 
(0,0) -- (-36:1.5) node[above] {\(\sfh_1\)} 
(0,0) -- (-12:1.5) node[above] {\(\sfh_2\)} 
(0,0) -- (12:1.5) node[above] {\(\sfh_3\)} 
(0,0) -- (36:1.5) node[above] {\(\sfh_4\)};
\draw[thick] 
(0,0) -- (60:1.5) arc (-30:210:{1.5/sqrt(3)}) node[midway, above] {\(\sfe_5\)} -- (0,0)
(0,0) -- (150:1.5) arc (60:300:{1.5/sqrt(3)}) node[midway, left] {\(\sfe_6\)} -- (0,0)
(0,0) -- (240:1.5) arc (150:390:{1.5/sqrt(3)}) node[midway, below] {\(\sfe_7\)} -- (0,0);
\draw[line width=3, gray]
(0,0) -- (60:1.5) node[left] {\(\sfh_5\)}
(0,0) -- (120:1.5) node[right] {\(\sfh_6\)}
(0,0) -- (150:1.5) node[below] {\(\sfh_7\)}
(0,0) -- (210:1.5) node[above] {\(\sfh_8\)}
(0,0) -- (240:1.5) node[right] {\(\sfh_9\)}
(0,0) -- (300:1.5) node[left] {\(\sfh_{10}\)}
;
\draw[thick, fill] (0,0) node[above] {\(\sfv\)} circle (2pt);
\end{tikzpicture}
  \caption{The graph \( \Gamma_{\ell,m} \) with \( \ell=3 \) and \( m=4 \).
    There are \( \ell+m=7 \) edges and \( 2\ell+m=10 \) half edges attached to \( \sfv \).}
  \label{fig:image16}
\end{figure}

For a vector \(\vec a = (a_1,\dots, a_n)\), we define \(\norm{\vec a}\) to be the sum of its entries.

\begin{defn}\label{def:HE}
  Let \( k\ge0 \). An \emph{HE-configuration of size \( k \)} is a pair
  \( (\sfh,\vec a) \) of a half edge \( \sfh \) adjacent to \(\sfv\) and a weight vector
  \( \vec a = (a_1,\dots,a_{\ell+m})\in \bbZ_{\ge0}^{\ell+m} \) with
  \( \norm{\vec a} = k\) satisfying one of the following
  conditions:
  \begin{enumerate}
\item \(\sfh=\sfh^1(\sfe_r)\) and \(a_r\ge 1\) for some \(1<r<\ell+m\), and there exists \(s\) such that \(r<s\le \ell+m\) and \(a_s\ge 1\),
\item \(\sfh=\sfh^2(\sfe_r)\) and \(a_r\ge 1\) for some \(1<r\le \ell+m\).
  \end{enumerate}
  We say that the HE-configuration is of \emph{type 1} if
  \( \sfh=\sfh^1(\sfe_r) \), and of \emph{type 2} if
  \( \sfh=\sfh^2(\sfe_r) \).

  For each \(\epsilon=1,2\), let \( \HE_{\ell,m}^\epsilon(k) \) denote
  the set of HE-configurations of type \(\epsilon\) of size \( k \). We
  also denote by
  \[\HE_{\ell,m}(k) = \HE_{\ell,m}^1(k)\sqcup \HE_{\ell,m}^2(k)\] the
  set of HE-configurations of size \( k \). Note that, by definition, 
  \( \HE_{\ell,m}(0) = \HE_{\ell,m}^1(0)= \HE_{\ell,m}^2(0) = \varnothing \).
\end{defn}

We may depict an HE-configuration \((\sfh,\vec a)\) on
\(\Gamma_{\ell,m}\) as follows: a thick red half edge \(\sfh\),
\((a_r-1)\) red dots on the edge \(\sfe_r\), and \(a_i\) red dots on
the edge \(\sfe_i\) for \(i\neq r\) as seen in
\Cref{fig:image13,fig:image14}.

\begin{figure}
\subcaptionbox{\((\sfh^1(\sfe_2), (2,2,3,0,2,1,0))\in\HE^1_{3,4}(10)\)}[.45\textwidth]{
\begin{tikzpicture}[baseline=-.5ex]
\draw[thick, fill, blue] 
(0,0) -- (-36:2) circle(2pt) node[below right] {\(*\)};
\draw[thick, fill] 
(0,0) -- (-12:2) circle(2pt) 
(0,0) -- (12:2) circle(2pt) 
(0,0) -- (36:2) circle (2pt);
\draw[thick] 
(0,0) -- (60:1) arc (-30:210:{1/sqrt(3)}) -- (0,0)
(0,0) -- (150:1) arc (60:300:{1/sqrt(3)}) -- (0,0)
(0,0) -- (240:1) arc (150:390:{1/sqrt(3)}) -- (0,0);
\draw[line width=3, red] 
(0,0) -- (-12:1);
\draw[thick, fill] (0,0) circle (2pt);
\draw[thick, red, fill] 
(-36:1.33) circle (2pt) (-36:1.67) circle (2pt)
(-12:1.5) circle (2pt)
(12:1.25) circle (2pt) (12:1.5) circle (2pt) (12:1.75) circle (2pt)
(90:{2/sqrt(3)}) + (60:{1/sqrt(3)}) circle (2pt) + (120:{1/sqrt(3)}) circle (2pt)
(180:{3/sqrt(3)}) circle (2pt)
;
\end{tikzpicture}
}
\subcaptionbox{\((\sfh^1(\sfe_5), (2,0,2,0,3,1,0))\in\HE^1_{3,4}(8)\)}[.45\textwidth]{
\begin{tikzpicture}[baseline=-.5ex]
\draw[thick, fill, blue] 
(0,0) -- (-36:2) circle(2pt) node[below right] {\(*\)};
\draw[thick, fill] 
(0,0) -- (-12:2) circle(2pt) 
(0,0) -- (12:2) circle(2pt) 
(0,0) -- (36:2) circle (2pt);
\draw[thick] 
(0,0) -- (60:1) arc (-30:210:{1/sqrt(3)}) -- (0,0)
(0,0) -- (150:1) arc (60:300:{1/sqrt(3)}) -- (0,0)
(0,0) -- (240:1) arc (150:390:{1/sqrt(3)}) -- (0,0);
\draw[line width=3, red] 
(0,0) -- (60:1);
\draw[thick, fill] (0,0) circle (2pt);
\draw[thick, red, fill] 
(-36:1.33) circle (2pt) (-36:1.67) circle (2pt)
(12:1.33) circle (2pt) (12:1.67) circle (2pt)
(90:{2/sqrt(3)}) + (60:{1/sqrt(3)}) circle (2pt) + (120:{1/sqrt(3)}) circle (2pt)
(180:{3/sqrt(3)}) circle (2pt)
;
\end{tikzpicture}
}
\caption{HE-configurations of type \(1\).}
\label{fig:image13}
\end{figure}
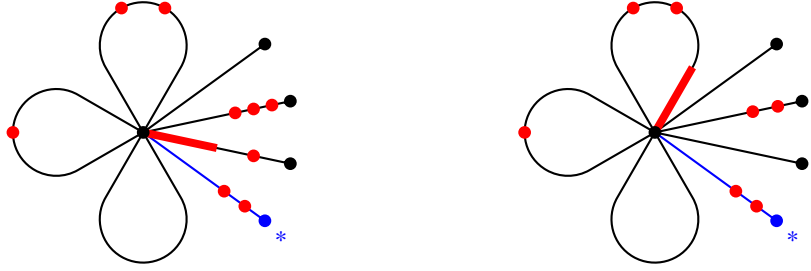

\begin{figure}
  \centering
\begin{tikzpicture}[baseline=-.5ex]
\draw[thick, fill, blue] 
(0,0) -- (-36:2) circle(2pt) node[below right] {\(*\)};
\draw[thick, fill] 
(0,0) -- (-12:2) circle(2pt) 
(0,0) -- (12:2) circle(2pt) 
(0,0) -- (36:2) circle (2pt);
\draw[thick] 
(0,0) -- (60:1) arc (-30:210:{1/sqrt(3)}) -- (0,0)
(0,0) -- (150:1) arc (60:300:{1/sqrt(3)}) -- (0,0)
(0,0) -- (240:1) arc (150:390:{1/sqrt(3)}) -- (0,0);
\draw[line width=3, red] 
(0,0) -- (120:1);
\draw[thick, fill] (0,0) circle (2pt);
\draw[thick, red, fill] 
(-36:1.5) circle (2pt)
(12:1.33) circle (2pt) (12:1.67) circle (2pt)
(180:{2/sqrt(3)}) + (90:{1/sqrt(3)}) circle (2pt) + (270:{1/sqrt(3)}) circle (2pt) + (180:{1/sqrt(3)}) circle (2pt)
(270:{3/sqrt(3)}) circle (2pt)
;
\end{tikzpicture}

  \caption{An HE-configuration \( (\sfh,\vec a)\in\HE^2_{3,4}(8) \) of type 2, where
    \( \sfh=\sfh^2(\sfe_5) \) and \( \vec a = (1,0,2,0,1,3,1) \).}
\label{fig:image14}
\end{figure}

\begin{defn}\label{defn:SHE}
  An HE-configuration \( (\sfh,\vec b=(b_1,\dots,b_{\ell+m})) \) is \emph{standard} if 
\(b_1=0\) and \( b_i\in \{0,1\} \) for all \( i\ge2 \).
We also call a standard HE-configuration an \emph{SHE-configuration}.

For each \(\epsilon=1,2\), let \( \SHE_{\ell,m}^\epsilon(j) \) denote the set of standard
HE-configurations of type \(\epsilon\) of size \( j \). We also
denote by \[ \SHE_{\ell,m}(j) = \SHE_{\ell,m}^1(j)\sqcup \SHE_{\ell,m}^2(j)\] the set of standard
HE-configurations of size \( j \).
\end{defn}

Note that if \( (\sfh,\vec b)\in \SHE_{\ell,m}(j) \), then
\( \sum_{i}b_i = j \) and there are exactly \( j \) nonzero entries in
\( \vec b \).

\begin{prop}\label{pro:HE=sum SHE}
  For \( k\ge0 \), we have
  \begin{align}
    \label{eq:3}
    |\HE_{\ell,m}(k)| &= \sum_{j\ge1} |\SHE_{\ell,m}(j)|\binom{k}{j},\\
    \label{eq:4}
    |\HE_{\ell,m}^1(k)| &= \sum_{j\ge1} |\SHE_{\ell,m}^1(j)|\binom{k}{j},\\
    \label{eq:5}
    |\HE_{\ell,m}^2(k)| &= \sum_{j\ge1} |\SHE_{\ell,m}^2(j)|\binom{k}{j}.
  \end{align}
\end{prop}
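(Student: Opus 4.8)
The plan is to prove a refinement of all three identities at once, indexed by a single half edge \( \sfh \) attached to \( \sfv \) (with \( \ell,m \) fixed throughout). For a fixed \( \sfh \), write \( \HE_\sfh(k) \) for the set of HE-configurations of size \( k \) whose chosen half edge is \( \sfh \), and \( \SHE_\sfh(j) \) for the corresponding standard ones. I would first establish the single identity
\[
|\HE_\sfh(k)| = \sum_{j\ge 1} |\SHE_\sfh(j)|\binom kj,
\]
and then recover \eqref{eq:3}, \eqref{eq:4}, and \eqref{eq:5} by summing it over all half edges, over the type-\(1\) half edges \( \sfh^1(\sfe_r) \), and over the type-\(2\) half edges \( \sfh^2(\sfe_r) \), respectively.

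The crucial observation is that the defining conditions of an HE-configuration never constrain the pivot coordinate \( a_1 \): in both Definition \ref{def:HE}(1) and (2) the relevant indices \( r \) and \( s \) satisfy \( r,s>1 \). Consequently, whether a pair \( (\sfh,\vec a) \) is an HE-configuration depends only on \( \sfh \) and on the support \( P=\{\,i\ge 2 : a_i\ge 1\,\}\subseteq\{2,\dots,\ell+m\} \). I would call such a \( P \) \emph{compatible with \( \sfh \)} and record that, for a type-\(1\) half edge \( \sfh^1(\sfe_r) \), compatibility means \( r\in P \) together with \( s\in P \) for some \( s>r \), while for a type-\(2\) half edge \( \sfh^2(\sfe_r) \) it means simply \( r\in P \). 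Since an SHE-configuration with half edge \( \sfh \) is exactly the indicator vector of a compatible support (the requirement \( b_1=0 \) forces the support into \( \{2,\dots,\ell+m\} \)), the assignment \( (\sfh,\vec b)\mapsto P \) is a bijection, so \( |\SHE_\sfh(j)| \) equals the number of compatible \( P \) with \( |P|=j \).

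Next I would count, for a fixed compatible \( P \) with \( |P|=j \), the HE-configurations \( (\sfh,\vec a) \) of size \( k \) whose support is exactly \( P \). These are the solutions of \( a_1+\sum_{p\in P}a_p=k \) with \( a_1\ge 0 \), each \( a_p\ge 1 \), and all remaining coordinates zero; substituting \( a_p=1+a_p' \) turns this into a weak composition of \( k-j \) into the \( j+1 \) nonnegative parts \( a_1,(a_p')_{p\in P} \), of which there are \( \binom kj \). Summing over all compatible \( P \) then yields the refined identity, and the three stated formulas follow. The one point to get right — and the only real subtlety — is the bookkeeping in this last count: it is precisely the \emph{freeness} of the unconstrained pivot weight \( a_1 \) that supplies the extra part, converting the naive composition count \( \binom{k-1}{j-1} \) into \( \binom kj \). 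This is also why the pivot edge \( \sfe_1 \) is singled out and excluded from the support in the definition of a standard configuration, so the bijection of the second step is clean.
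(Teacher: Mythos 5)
Your proposal is correct and follows essentially the same route as the paper: both map an HE-configuration to the standard configuration recording its support away from the pivot coordinate, and both count each fiber as \( \binom{k}{j} \) (you via weak compositions of \( k-j \) into \( j+1 \) parts, the paper via positive compositions of \( k+1 \) into \( j+1 \) parts, which is the same count after the shift \( a_1=c_0-1 \)). Your refinement to a single half edge \( \sfh \) and your explicit remark that the pivot weight \( a_1 \) is unconstrained are finer bookkeeping than the paper's type-by-type treatment, but not a different argument.
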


\begin{proof}
Since \Cref{eq:3} is obtained
  by adding \Cref{eq:4,eq:5}, we only need to prove the
  last two identities.

Let \(\epsilon\) be either \(1\) or \(2\).
For each \((\sfh,\vec a=(a_1,\dots,a_{\ell+m}))\in\HE^\epsilon_{\ell,m}(k)\),
we define the vector \(\vec b=(b_1,\dots,b_{\ell+m})\) by 
\[
b_1=0
\qquad\text{ and }\qquad
b_i=\begin{cases}
0 & \text{if }a_i=0;\\
1 & \text{if }a_i\ge 1,
\end{cases}
\quad\text{ for }i\ge 2.
\]
Then we have \((\sfh, \vec b)\in\SHE^\epsilon_{\ell,m}(j)\) for some \(j\).
Hence the set \(\HE^\epsilon_{\ell,m}(k)\) can be decomposed into the sets \(\SHE^\epsilon_{\ell,m}(j)\) for \(j\ge 1\).

  Conversely, suppose \( 1\le j\le k \) and let
  \( (\sfh,\vec b)\in \SHE_{\ell,m}^\epsilon(j) \) be a standard \(\HE\)-configuration of type
  \(\epsilon\) with \( \sfh=\sfh^\epsilon(e_r) \). Let
  \( 1\le t_1<t_2<\dots<t_j\le \ell+m \) be the indices \( i \) such
  that \( b_i\ne 0 \). We will show that there are \( \binom{k}{j} \)
  ways to construct an \(\HE\)-configuration \( (\sfh,\vec a')\in\HE_{\ell,m}^\epsilon(k) \)
  using \( (\sfh,\vec a) \). Note that \( \binom{k}{j} \) is the number
  of sequences \( \vec c=(c_0,\dots,c_j)\in \bbZ_{\ge 1}^{j+1} \) of positive
  integers such that \(\norm{\vec c}=k+1\).
  Given such a sequence \( \vec c \), we define the sequence \( \vec a = (a_1,\dots,a_{\ell+m}) \) as
  follows:
  \begin{equation}\label{eq:2}
    a_{p} =
    \begin{cases}
    c_0-1 & \text{if }p=1,\\
    c_i  & \text{if }p=t_i\text{ for some }1\le i\le j,\\
    0  & \text{otherwise.}
    \end{cases}
  \end{equation}
  Then \( (\sfh,\vec a)\in \HE_{\ell,m}^\epsilon(k) \). Since any element in
  \( (\sfh,\vec a)\in \HE_{\ell,m}^\epsilon(k) \) can be constructed in this
  way, we obtain \Cref{eq:4,eq:5}.
\end{proof}

\begin{prop}\label{prop:SHE}
For \( j\ge1 \),  we have
  \begin{align}
    \label{eq:SHE}
    |\SHE_{\ell,m}(j)| &= (2\ell+m-2)\binom{\ell+m-2}{j-1} - \binom{\ell+m-2}{j},\\
    \label{eq:SHE1}
    |\SHE_{\ell,m}^1(j)| &= (\ell+m-2)\binom{\ell+m-2}{j-1} - \binom{\ell+m-2}{j},\\
    \label{eq:SHE2}
    |\SHE_{\ell,m}^2(j)| &= \ell\binom{\ell+m-2}{j-1}.
  \end{align}
\end{prop}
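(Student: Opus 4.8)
The plan is to count SHE-configurations by recording only their \emph{support}. Since \eqref{eq:SHE} is the sum of \eqref{eq:SHE1} and \eqref{eq:SHE2}, it suffices to prove the latter two. Write $n=\ell+m$ for the number of edges. Given an SHE-configuration $(\sfh,\vec b)$ of size $j$, the standardness condition (that $b_1=0$ and $b_i\in\{0,1\}$ for $i\ge2$) means that $\vec b$ is the indicator vector of its support $S=\{i:b_i=1\}$, which is a $j$-element subset of $\{2,\dots,n\}$. Thus an SHE-configuration is nothing more than such a set $S$ together with a choice of half edge $\sfh$ subject to the type-1 or type-2 constraints, and the problem reduces to counting the admissible pairs $(\sfh,S)$.

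For \eqref{eq:SHE2}, I would first note that a type-2 half edge $\sfh=\sfh^2(\sfe_r)$ is attached to $\sfv$ only when $\sfe_r$ is a loop; hence $\sfe_r$ ranges over the $\ell$ loops, all of which satisfy $r>1$ since the pivot $\sfe_1$ is a non-loop. The only remaining condition is $a_r\ge1$, i.e.\ $r\in S$. So I choose the loop $\sfe_r$ ($\ell$ ways) and then complete $S$ by selecting the other $j-1$ elements from $\{2,\dots,n\}\setminus\{r\}$, a set of size $n-2$. This yields $\ell\binom{n-2}{j-1}=\ell\binom{\ell+m-2}{j-1}$, as claimed.

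For \eqref{eq:SHE1}, the key is to translate the defining clauses into set-theoretic language: $\sfh=\sfh^1(\sfe_r)$ with $r\in S$ and with some $s>r$ satisfying $s\in S$, the latter saying precisely that $r$ is \emph{not} the maximum of $S$. The crucial observation is that this non-maximality already forces $r<\max(S)\le n$, so the bound $r<n$ from \Cref{def:HE} is automatic and need not be imposed separately, while $r\ge2$ is automatic from $r\in S\subseteq\{2,\dots,n\}$. Thus type-1 configurations of size $j$ correspond to pairs $(r,S)$ with $r\in S$ and $r\ne\max(S)$. I would count these by subtraction: the pairs with $r\in\{2,\dots,n-1\}$ and $r\in S$ number $(n-2)\binom{n-2}{j-1}$ (choose $r$, then the other $j-1$ elements of $S$ among the remaining $n-2$ positions), and the ones to discard are exactly those with $r=\max(S)$, which are determined by a $j$-subset $S\subseteq\{2,\dots,n-1\}$ and hence number $\binom{n-2}{j}$. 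The difference is $(n-2)\binom{n-2}{j-1}-\binom{n-2}{j}$, which is \eqref{eq:SHE1}. As a cross-check, the same pairs can be counted as $(j-1)\binom{n-1}{j}$ (each $j$-subset admits $j-1$ non-maximal choices of $r$), and Pascal's rule confirms the two expressions agree.

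The individual steps are short, so the point requiring the most care is the translation in the type-1 case: one must verify that the three clauses of condition~(1) in \Cref{def:HE} (namely $a_r\ge1$, the bound $1<r<n$, and the existence of $s>r$ with $a_s\ge1$) collapse exactly to ``$r\in S$ and $r\ne\max(S)$,'' and in particular that no pairs are lost or double-counted when the explicit bound $r<n$ is dropped in favour of non-maximality. A secondary subtlety is the type-2 observation that only loops contribute a second half edge at $\sfv$; obtaining the factor $\ell$ (rather than something involving $m$) hinges on this, and on the fact that every loop index satisfies $r>1$.
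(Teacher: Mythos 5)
Your proposal is correct and follows essentially the same route as the paper: the same reduction of \eqref{eq:SHE} to the sum of the other two identities, the same direct count of $\ell\binom{\ell+m-2}{j-1}$ for type~2, and the same subtraction argument for type~1 (count all pairs with $1<r<\ell+m$ and $r$ in the support, then remove those where $r$ is the maximum, which are parametrized by $j$-subsets avoiding positions $1$ and $\ell+m$). Your reformulation via supports and the Pascal's-rule cross-check are harmless additions; no gaps.
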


\begin{proof}
  Note that \Cref{eq:SHE} is obtained by adding \Cref{eq:SHE1,eq:SHE2}.
  \Cref{eq:SHE2} follows easily from
  the observation that each \( (\sfh,\vec a)\in \SHE_{\ell,m}^2(j) \) is
  obtained by choosing a loop \( \sfe_r \) for \( \sfh=\sfh^2(\sfe_r) \) in
  \( \ell \) ways and choosing a weight vector \( \vec a \) satisfying
  \( a_i\le 1 \) for all \( i \), \( a_1=0 \), \( a_r=1 \), and
  \( \norm{\vec a} = j \) in \( \binom{\ell+m-2}{j-1} \)
  ways.

  To prove \Cref{eq:SHE1}, observe that if
  \( (\sfh,\vec a)\in \SHE_{\ell,m}^1(k) \), then \( \sfh=\sfh^1(\sfe_r) \) is
  determined by the index \( r \), which satisfies \( 1<r<\ell+m \).
  Hence, letting
  \( W \) denote
  the set of weight vectors
  \( \vec a = (a_1,\dots,a_{\ell+m}) \)
  such that \( a_i\le 1 \) for all \( i \)
  and \( \norm{\vec a} = k \), we have
\( |\SHE_{\ell,m}^1(k)| = |A| \), where
  \[
    A = \left\{(r,\vec a): 1<r<\ell+m, \vec a\in W, a_1=0, a_r=1,
      \mbox{ and there is \( r<s\le \ell+m \) such that \( a_s=1 \)} \right\}.
  \]
  Then \( |A|=|A_1|-|A_2| \), where
  \begin{align*}
    A_1 &= \left\{(r,\vec a): 1<r<\ell+m, \vec a\in W, a_1=0, a_r=1 \right\},\\
    A_2 &= \left\{(r,\vec a): 1<r<\ell+m, \vec a\in W, a_1=0, a_r=1 ,
     \mbox{ and there is no \( r<s\le \ell+m \) such that \( a_s=1 \)} \right\}.
  \end{align*}
  It suffices to show that
  \begin{align}
    \label{eq:7}
    |A_1|&= (\ell+m-2) \binom{\ell+m-2}{k-1},\\
    \label{eq:8}
    |A_2|&= \binom{\ell+m-2}{k}.
  \end{align}

  \Cref{eq:7} follows from the observation that there
  are \( \ell+m-2 \) ways to choose \( r \) such that
  \( 1<r<\ell+m \), and then \( \binom{\ell+m-2}{k-1} \) ways to
  choose \( \vec a\in W \) such that \( a_1=0 \) and \( a_r= 1 \). To
  see \Cref{eq:8}, note that if \( (r,\vec a)\in A_2 \), then
  \( r \) is determined by \( \vec a \), namely,
  \( r = \max\{i:a_i=1\} \). Hence, \( |A_2| \) is the number of
  weight vectors \( \vec a \in W \) such that \( a_1=0 \) and
  \( 1<\max\{i:a_i=1\}<\ell+m \). Equivalently, \( |A_2| \) is the
  number of weight vectors \( \vec a \in W \) such that
  \( a_1=a_{\ell+m}=0 \), which implies \Cref{eq:8}. This completes
  the proof.
\end{proof}

\begin{prop}\label{prop:HE and H1}
  For \( k\ge0 \), we have
  \begin{align}
    \label{eq:HE}
    |\HE_{\ell,m}(k)| &= (2\ell+m-2)\binom {k+\ell+m-2}{\ell+m-1} - \binom{k+\ell+m-2}{\ell+m-2}+1,\\
    \label{eq:HE1}
    |\HE_{\ell,m}^1(k)| &= (\ell+m-2)\binom {k+\ell+m-2}{\ell+m-1} - \binom{k+\ell+m-2}{\ell+m-2}+1,\\
    \label{eq:HE2}
    |\HE_{\ell,m}^2(k)| &= \ell \binom{k+\ell+m-2}{\ell+m-1}.
  \end{align}
\end{prop}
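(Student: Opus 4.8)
The plan is to combine the two preceding propositions and reduce the problem to two standard binomial summations. Since $\HE_{\ell,m}(k)=\HE^1_{\ell,m}(k)\sqcup\HE^2_{\ell,m}(k)$, the identity \eqref{eq:HE} is the termwise sum of \eqref{eq:HE1} and \eqref{eq:HE2}, so it suffices to prove the two type-specific formulas. Substituting the counts of \Cref{prop:SHE} into the expansions provided by \Cref{pro:HE=sum SHE} and writing $n:=\ell+m-2$, everything reduces to evaluating
\[
S_1(k):=\sum_{j\ge1}\binom{n}{j-1}\binom{k}{j}
\quad\text{and}\quad
S_2(k):=\sum_{j\ge1}\binom{n}{j}\binom{k}{j}
\]
in closed form.

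For $S_1$ I would reindex by $i=j-1$ and use $\binom{k}{i+1}=\binom{k}{(k-1)-i}$, so that the Chu--Vandermonde convolution yields
\[
S_1(k)=\sum_{i\ge0}\binom{n}{i}\binom{k}{(k-1)-i}=\binom{n+k}{k-1}=\binom{k+n}{n+1}.
\]
For $S_2$ I would instead write $\binom{k}{j}=\binom{k}{k-j}$ and apply Vandermonde to get $\sum_{j\ge0}\binom{n}{j}\binom{k}{j}=\binom{k+n}{n}$; subtracting the $j=0$ term, which contributes $1$, gives $S_2(k)=\binom{k+n}{n}-1$. This boundary term is exactly the source of the additive constant $+1$ appearing in \eqref{eq:HE} and \eqref{eq:HE1}.

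Assembling the pieces is then routine. By \eqref{eq:SHE2}, the type-$2$ count is $|\HE^2_{\ell,m}(k)|=\ell\,S_1(k)=\ell\binom{k+n}{n+1}$, which is \eqref{eq:HE2}. By \eqref{eq:SHE1}, the type-$1$ count is $(\ell+m-2)S_1(k)-S_2(k)=n\binom{k+n}{n+1}-\binom{k+n}{n}+1$, and substituting back $n=\ell+m-2$ together with $\binom{k+n}{n}=\binom{k+\ell+m-2}{\ell+m-2}$ recovers \eqref{eq:HE1}. Adding the two formulas (equivalently, using \eqref{eq:SHE} with leading coefficient $(\ell+m-2)+\ell=2\ell+m-2$) gives \eqref{eq:HE}.

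I do not expect a genuine obstacle here, since once \Cref{pro:HE=sum SHE,prop:SHE} are in hand the argument is just a pair of Vandermonde convolutions. The only points demanding care are the reindexing that puts $S_1$ into Vandermonde form and the treatment of the $j=0$ boundary term in $S_2$, which is precisely what produces the $+1$ in two of the three formulas. As a consistency check I would verify directly that the three right-hand sides respect \eqref{eq:HE}${}={}$\eqref{eq:HE1}${}+{}$\eqref{eq:HE2}: the leading coefficients add as $(\ell+m-2)+\ell=2\ell+m-2$, while the $-\binom{k+n}{n}+1$ tails of the type-$1$ and total formulas coincide.
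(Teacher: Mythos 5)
Your proposal is correct and follows essentially the same route as the paper: both substitute the counts from \Cref{prop:SHE} into the expansion of \Cref{pro:HE=sum SHE} and evaluate the resulting sums by the Chu--Vandermonde identity, with the $j=0$ boundary term accounting for the $+1$. The only difference is cosmetic (you flip the $\binom{k}{j}$ factor and reindex, while the paper flips the $\binom{\ell+m-2}{j-1}$ factor), so nothing further is needed.
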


\begin{proof}
  Note that \Cref{eq:HE} is obtained by adding \Cref{eq:HE1} and
  \Cref{eq:HE2}. By \Cref{eq:4,eq:SHE1}, and the identity
  \( \sum_{j\ge0} \binom{a}{j} \binom{b}{c-j} = \binom{a+b}{c} \), we
  have
  \begin{align*}
    |\HE_{\ell,m}^1(k)|
    &= (\ell+m-2)\sum_{j\ge1} \binom{k}{j} \binom{\ell+m-2}{j-1}
      - \sum_{j\ge1} \binom{k}{j} \binom{\ell+m-2}{j}\\
    &= (\ell+m-2)\sum_{j\ge0} \binom{k}{j} \binom{\ell+m-2}{\ell+m-1-j}
      - \sum_{j\ge0} \binom{k}{j} \binom{\ell+m-2}{\ell+m-2-j} + 1\\
     &= (\ell+m-2)\binom{k+\ell+m-2}{\ell+m-1} - \binom{k+\ell+m-2}{\ell+m-2} + 1,
  \end{align*}
  which gives \Cref{eq:HE1}. Similarly, using \Cref{eq:5} and
  \Cref{eq:SHE2}, we obtain \Cref{eq:HE2}.
\end{proof}

By \eqref{equation:residual for elementary}, \eqref{eq:HE}, and \eqref{eq:3}, we obtain the following
combinatorial descriptions for \( P^1_{\Gamma_{\ell,m}}(k) \) and its
coefficients. 
\begin{prop}\label{pro:1}
  Let \( \ell\ge0 \) and \( m\ge1 \) be integers
  such that \(2\ell+m\ge 3\).
  Then we have
  \begin{equation}\label{eq:P1_comb}
    P^1_{\Gamma_{\ell,m}}(k) = 
        |\HE_{\ell,m}(k)| = \sum_{j\ge1} |\SHE_{\ell,m}(j)|\binom{k}{j}.
  \end{equation}
In other words,
 \begin{enumerate}
 \item for a nonnegative integer \( k \),
   \( P^1_{\Gamma_{\ell,m}}(k) \) is the number of ways to mark a half
   edge \( \sfh \) and place \( (k-1) \) dots in \( \Gamma_{\ell,m} \)
   in such a way that
\begin{enumerate}
\item \( \sfh \) is not on the pivot edge, and
\item if \( \sfh = \sfh^1(\sfe) \), then there is a dot in an edge
  with a label smaller than that of \( \sfe \);
\end{enumerate}
\item considering \( P^1_{\Gamma_{\ell,m}}(k) \) as a polynomial in
  \( k \), its coefficient of \( \binom{k}{j} \) is the number of ways
  to mark a half edge \( \sfh \) and place \( (j-1) \) dots in
  \( \Gamma_{\ell,m} \) in such a way that
\begin{enumerate}
\item \( \sfh \) is not on the pivot edge,
\item every edge has at most one dot,
  and the first edge does not have a dot,
\item if \( \sfh = \sfh^1(\sfe) \), then there is a dot in an edge
  with a label smaller than that of \( \sfe \).
\end{enumerate}
 \end{enumerate}
\end{prop}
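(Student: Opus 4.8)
The plan is to read off both equalities in \eqref{eq:P1_comb} from formulas already established and then to interpret the two counting statements directly through the pictorial description of HE- and SHE-configurations, so the argument is essentially one of assembly rather than new computation.

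First I would prove the left equality \( P^1_{\Gamma_{\ell,m}}(k)=|\HE_{\ell,m}(k)| \). Since \( m\ge1 \) and \( 2\ell+m\ge3 \) force \( \ell+m\ge2 \), \Cref{prop:elementary} gives \( P^1_{\Gamma_{\ell,m}}(k)=N_{\ell,m}(k) \) with \( N_{\ell,m}(k) \) as in \eqref{eq:first homology}. Comparing this with the expression for \( |\HE_{\ell,m}(k)| \) in \eqref{eq:HE} of \Cref{prop:HE and H1}, the two are the same expression in \( k \), which settles the left equality. The right equality is literally \eqref{eq:3} of \Cref{pro:HE=sum SHE}, so \eqref{eq:P1_comb} follows at once.

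Next I would verify statement (1) by translating \Cref{def:HE} through the dot convention fixed just after it. An HE-configuration \( (\sfh,\vec a) \) of size \( k \) is drawn by marking the half edge \( \sfh=\sfh^\epsilon(\sfe_r) \), placing \( a_r-1 \) dots on \( \sfe_r \) and \( a_i \) dots on each other edge \( \sfe_i \); since \( \norm{\vec a}=k \), exactly \( k-1 \) dots are used, matching the count in (1). Condition (a) records the constraint \( r\ne1 \) common to both clauses of \Cref{def:HE} (so \( \sfh \) avoids the pivot edge \( \sfe_1 \)), while condition (b) is precisely the existence clause of \Cref{def:HE} for a type-\(1\) half edge, read as requiring a dot on an edge of the prescribed label. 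Thus the configurations counted in (1) are exactly \( \HE_{\ell,m}(k) \), whose size is \( P^1_{\Gamma_{\ell,m}}(k) \).

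Finally, for statement (2) I would apply the same dictionary to \Cref{defn:SHE}. A standard configuration \( (\sfh,\vec b) \) of size \( j \) satisfies \( b_1=0 \) and \( b_i\in\{0,1\} \), which pictorially means the first edge carries no dot and every edge carries at most one; as the marked edge contributes \( b_r-1=0 \) dots and \( \norm{\vec b}=j \), exactly \( j-1 \) dots appear. Hence the configurations described in (2) are exactly \( \SHE_{\ell,m}(j) \), and since \( P^1_{\Gamma_{\ell,m}}(k)=\sum_{j\ge1}|\SHE_{\ell,m}(j)|\binom kj \), their number is the coefficient of \( \binom kj \). The only genuine content beyond citing earlier results is this pictorial bookkeeping: tracking the \( -1 \) offset on the marked edge and matching the existence condition of \Cref{def:HE} — in particular fixing the orientation of the edge labeling so that the phrasing in (1)(b) and (2)(c) agrees with the index condition in the definition — which is the point I would take the most care over.
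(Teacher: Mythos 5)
Your proposal is correct and follows exactly the paper's route: the paper derives \Cref{pro:1} in one line by combining \Cref{prop:elementary} (via \eqref{equation:residual for elementary}), \Cref{prop:HE and H1} (via \eqref{eq:HE}), and \Cref{pro:HE=sum SHE} (via \eqref{eq:3}), and your assembly of these three ingredients plus the pictorial dictionary is the same argument spelled out in more detail. Your closing caveat about matching the direction of the label comparison in (1)(b) and (2)(c) with the index condition \( s>r \) of \Cref{def:HE} is well placed, since that is the one point where the prose restatement requires care.
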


\subsection{Hilbert polynomials}

Let \(\Gamma\cong(\sfT,\loops)\in\Grape\).
As mentioned earlier at the beginning of this section, we assume that the stem \(\sfT\) has at least one edge.

An \emph{oriented root} of \(\Gamma\) is a pair \((\sfv_0,\sfe_0)\) of
an essential vertex \( \sfv_0 \) and an edge \(\sfe_0\) of \( \sfT \)
adjacent to $\sfv_0$. We call \(\sfv_0\) and \(\sfe_0\) the \emph{root
  vertex} and the \emph{root edge}, respectively.

Let us fix an oriented root of \(\Gamma\). We label the edges of
\(\Gamma\) and its decomposition as seen in \Cref{figure:decomposion
  along the stem}. More precisely, at each vertex \(\sfv\) of the stem
\(\sfT\) different from \(\sfv_0\), let \(\sfe^\sfv\) be the unique
edge adjacent to \(\sfv\) that is the first edge of the shortest path
from \(\sfv\) to \(\sfv_0\). Then the edge \(\sfe^\sfv\) defines the
pivot edge \(\sfe_1^{\sfv}\) of the local graph \(\Gamma_\sfv\) in a
natural way.

Now we fix an embedding of \(\Gamma\) in \( \bbR^2 \) such that the
restriction to each local graph \(\Gamma_\sfv\) as a subspace of
\(\Gamma\) satisfies the conditions on the embedding of
\( \Gamma_{\ell,m} \) described in \Cref{sec:HE and SHE}. Since the
pivot edge \( \sfe^{\sfv}_1 \) of
\( \Gamma_{\sfv} = \Gamma_{\ell(\sfv),m(\sfv)} \) has been defined for
each essential vertex \( \sfv \) of \( \Gamma \), we can label all
edges and half edges of \( \Gamma_\sfv \) with
\( \sfe^{\sfv}_1,\dots,\sfe^{\sfv}_{\ell(\sfv)+m(\sfv)} \) and
\( \sfh^{\sfv}_1,\dots,\sfh^{\sfv}_{2\ell(\sfv)+m(\sfv)} \) in
counterclockwise order by using the given embedding of \(\Gamma\) as
before. Then every half edge adjacent to \( \sfv \) in \(\Gamma_\sfv\)
can be identified with a unique half edge in \(\Gamma\) adjacent to \(\sfv\).

Finally, the label of each loop in \(\Gamma\) is given by the label of
the corresponding loop in the decomposition. See
\Cref{figure:labelling1,figure:labelling2}.

\begin{figure}
  \subcaptionbox{Labels on edges of \(\Gamma\).
      The oriented root of \(\Gamma\) is \((\mathsf{0},\sfe^{\mathsf{13}})\), which is denoted by a red arrow.
    \label{figure:labelling1}}[.8\textwidth]{
\(
\begin{tikzpicture}[baseline=-.5ex]
\draw[thick,fill] (0,0) circle (2pt) node (A) {} node[above] {\(\mathsf{1}\)} -- ++(5:2) node[midway, above] {\(\sfe^{\mathsf{1}}\)} circle(2pt) node(B) {} node[above left] {\(\mathsf{0}\)};
\draw[thick,fill] (B.center) +(0,1.5) circle (2pt) node (C) {} node[left=0.5ex] {\(\mathsf{13}\)};
\draw[very thick,fill,<-, red] (B.center)+(0,2pt) -- (C.center) node[midway, right] {\(\sfe^{\mathsf{13}}\)};
\draw[thick,fill] (B.center) -- +(0,-1.5) node[midway, right] {\(\sfe^{\mathsf{2}}\)} circle (2pt) node(D) {} node[left] {\(\mathsf{2}\)} +(0,0) -- ++(-10:1.5) node[midway, above] {\(\sfe^{\mathsf{3}}\)} circle(2pt) node(E) {} node[above] {\(\mathsf{3}\)} -- ++(10:1.5) node[midway, below] {\(\sfe^{\mathsf{7}}\)} circle(2pt) node(F) {} node[below=0.5ex] {\(\mathsf{7}\)} -- +(120:1) node[midway, left] {\(\sfe^{\mathsf{12}}\)} circle(2pt) node(G) {} node[xshift=-1.2ex,yshift=2ex] {\(\mathsf{12}\)} +(0,0) -- +(60:1) node[midway, right] {\(\sfe^{\mathsf{11}}\)} circle (2pt) node(H) {} node[xshift=1.2ex,yshift=2ex] {\(\mathsf{11}\)} +(0,0) -- ++(-5:1) node[midway, below] {\(\sfe^{\mathsf{8}}\)} circle (2pt) node(I) {} node[below] {\(\mathsf{8}\)} -- +(30:1) node[midway, above] {\(\sfe^{\mathsf{10}}\)} circle(2pt) node(J) {} node[right] {\(\mathsf{10}\)} +(0,0) -- +(-30:1) node[midway, below] {\(\sfe^{\mathsf{9}}\)} circle (2pt) node(K) {} node[above] {\(\mathsf{9}\)};
\draw[thick, fill] (E.center) -- ++(0,-1) node[midway, right] {\(\sfe^{\mathsf{4}}\)} circle (2pt) node[left] {\(\mathsf{4}\)} -- +(-60:1) node[midway, right] {\(\sfe^{\mathsf{6}}\)} circle (2pt) node[right] {\(\mathsf{6}\)} +(0,0) -- +(-120:1) node[midway, left] {\(\sfe^{\mathsf{5}}\)} circle (2pt) node[left] {\(\mathsf{5}\)};
\grape[180]{A};
\draw (A.center)+(180:{1.5/sqrt(3)}) node[left] {\(\sfe_2^{\mathsf{1}}\)};
\grape[0]{C}; \grape[90]{C}; \grape[180]{C};
\draw (C.center)+(0:{1.5/sqrt(3)}) node[right] {\(\sfe_2^{\mathsf{13}}\)};
\draw (C.center)+(90:{1.5/sqrt(3)}) node[above] {\(\sfe_3^{\mathsf{13}}\)};
\draw (C.center)+(180:{1.5/sqrt(3)}) node[left] {\(\sfe_4^{\mathsf{13}}\)};
\grape[-90]{D};
\draw (D.center)+(-90:{1.5/sqrt(3)}) node[below] {\(\sfe_2^{\mathsf{2}}\)};
\grape[-90]{F};
\draw (F.center)+(-90:{1.5/sqrt(3)}) node[below] {\(\sfe_2^{\mathsf{7}}\)};
\grape[120]{G};
\draw (G.center)+(120:{1.5/sqrt(3)}) node[above] {\(\sfe_2^{\mathsf{12}}\)};
\grape[60]{H};
\draw (H.center)+(60:{1.5/sqrt(3)}) node[above] {\(\sfe_2^{\mathsf{11}}\)};
\grape[-30]{K};
\draw (K.center)+(-30:{1.5/sqrt(3)}) node[right] {\(\sfe_2^{\mathsf{9}}\)};
\end{tikzpicture}
\)
}
\subcaptionbox{Labels on edges of the decomposition of \(\Gamma\).
  The oriented root of \(\Gamma_{\mathsf{0}}\) is \((\mathsf{0},\sfe_1^{\mathsf{0}})\), which is denoted by a red arrow.
  \label{figure:labelling2}}[.8\textwidth]{
\(
\begin{tikzpicture}[baseline=-.5ex]
\draw[thick,fill] (0,0) circle (2pt) node (A) {} node[above] {\(\mathsf{1}\)} [blue] -- ++(5:1) circle (2pt) node[midway,above] {\(\sfe_1^{\mathsf{1}}\)};
\draw (-180:{1.5/sqrt(3)}) node[left] {\(\sfe_2^{\mathsf{1}}\)};
\grape[180]{A};
\begin{scope}[xshift=0.2cm]
	\draw[thick,fill] (0,0) +(5:1) circle (2pt) -- ++(5:2) circle(2pt) node(B) {} node[above left] {\(\mathsf{0}\)} +(0,0) -- +(0,-0.75) circle (2pt) +(0,0) -- +(-10:0.75) circle (2pt);
	\draw[very thick, fill, red,<-] (B.center)+(0,2pt) -- +(0,0.75) node[midway, right] {\(\sfe_1^{\mathsf{0}}\)};
	\draw[thick, fill, blue] (B.center) +(0,0.75) circle (2pt);

	\begin{scope}[yshift=0.2cm]
		\draw[thick,fill, blue] (0,0) ++(5:2) +(0,0.75) circle (2pt) -- +(0,1.5) node[near start,right] {\(\sfe_1^{\mathsf{13}}\)};
		\draw[thick,fill] (0,0) ++(5:2) +(0,0.75) +(0,1.5) circle (2pt) node (C) {}  node[left=0.5ex] {\(\mathsf{13}\)};
		\grape[0]{C}; \grape[90]{C}; \grape[180]{C};
		\draw (C.center) +(0:{1.5/sqrt(3)}) node[right] {\(\sfe_2^{\mathsf{13}}\)};
		\draw (C.center) +(90:{1.5/sqrt(3)}) node[above] {\(\sfe_3^{\mathsf{13}}\)};
		\draw (C.center) +(180:{1.5/sqrt(3)}) node[left] {\(\sfe_4^{\mathsf{13}}\)};
	\end{scope}

	\begin{scope}[yshift=-0.2cm]
		\draw[thick,fill, blue] (0,0) ++(5:2) +(0,-0.75) circle (2pt) -- +(0,-1.5) node[midway,right] {\(\sfe_1^{\mathsf{2}}\)};
		\draw[thick,fill] (0,0) ++(5:2) +(0,-0.75) +(0,-1.5) circle (2pt) node (D) {} node[left] {\(\mathsf{2}\)};
		\grape[-90]{D};
		\draw (D.center) +(-90:{1.5/sqrt(3)}) node[below] {\(\sfe_2^{\mathsf{2}}\)};
	\end{scope}

	\begin{scope}[xshift=0.2cm]
		\draw[thick,fill, blue] (0,0) ++(5:2) ++(-10:0.75) circle (2pt) -- ++(-10:0.75) node[midway,above] {\(\sfe_1^{\mathsf{3}}\)};
		\draw[thick,fill] (0,0) ++(5:2) ++(-10:0.75) ++(-10:0.75) circle(2pt) node(E) {} node[above] {\(\mathsf{3}\)} -- +(10:0.75) circle (2pt) +(0,0) -- +(0,-0.5) circle (2pt);

		\begin{scope}[yshift=-0.2cm]
			\draw[thick,fill, blue] (0,0) ++(5:2) ++(-10:1.5) ++(0,-0.5) circle (2pt) -- ++(0,-0.5) node[midway,right] {\(\sfe_1^{\mathsf{4}}\)};
			\draw[thick,fill] (0,0) ++(5:2) ++(-10:1.5) ++(0,-0.5) ++(0,-0.5) circle (2pt) node[left] {\(\mathsf{4}\)} -- +(-60:0.5) +(0,0) -- +(-120:0.5);

			\begin{scope}[yshift=0cm]
				\draw[thick,fill] (0,0) ++(5:2) ++(-10:1.5) ++(0,-1) +(-120:0.5) -- +(-120:1) circle (2pt) node[left] {\(\mathsf{5}\)} +(-60:0.5) -- +(-60:1) circle (2pt) node[right] {\(\mathsf{6}\)};
			\end{scope}
		\end{scope}

		\begin{scope}[xshift=0.2cm]
			\draw[thick,fill,blue] (0,0) ++(5:2) ++(-10:1.5) ++(10:0.75) circle(2pt) -- ++(10:0.75) node[near start,below] {\(\sfe_1^{\mathsf{7}}\)};
			\draw[thick,fill] (0,0) ++(5:2) ++(-10:1.5) ++(10:0.75) ++(10:0.75) circle(2pt) node(F) {} node[below=0.5ex] {\(\mathsf{7}\)} -- +(120:0.5) circle (2pt) +(0,0) -- +(60:0.5) circle (2pt) +(0,0) -- +(-5:0.5) circle (2pt);
			\grape[-90]{F};
			\draw (F.center) +(-90:{1.5/sqrt(3)}) node[below] {\(\sfe_2^{\mathsf{7}}\)};

			\begin{scope}[xshift=-.2cm, yshift=0.2cm]
				\draw[thick,fill,blue] (0,0) ++(5:2) ++(-10:1.5) ++(10:1.5) +(120:0.5) circle (2pt) -- +(120:1) node[near start,left] {\(\sfe_1^{\mathsf{12}}\)};
				\draw[thick,fill] (0,0) ++(5:2) ++(-10:1.5) ++(10:1.5) +(120:1) circle (2pt) node(G) {} node[xshift=-1.2ex,yshift=2ex] {\(\mathsf{12}\)};
				\grape[120]{G};
				\draw (G.center) +(120:{1.5/sqrt(3)}) node[above] {\(\sfe_2^{\mathsf{12}}\)};
			\end{scope}
			\begin{scope}[xshift=.2cm, yshift=0.2cm]
				\draw[thick,fill,blue] (0,0) ++(5:2) ++(-10:1.5) ++(10:1.5) +(60:0.5) circle (2pt) -- +(60:1) node[near start,right] {\(\sfe_1^{\mathsf{11}}\)};
				\draw[thick,fill] (0,0) ++(5:2) ++(-10:1.5) ++(10:1.5) +(60:1) node(H) {} node[xshift=1.2ex,yshift=2ex] {\(\mathsf{11}\)};
				\grape[60]{H};
				\draw (H.center) +(60:{1.5/sqrt(3)}) node[above] {\(\sfe_2^{\mathsf{11}}\)};
			\end{scope}

			\begin{scope}[xshift=0.2cm]
				\draw[thick,fill,blue] (0,0) ++(5:2) ++(-10:1.5) ++(10:1.5) ++(-5:0.5) circle (2pt) -- ++(-5:0.5) node[midway, above] {\(\sfe_1^{\mathsf{8}}\)};
				\draw[thick,fill] (0,0) ++(5:2) ++(-10:1.5) ++(10:1.5) ++(-5:0.5) ++(-5:0.5) circle (2pt) node(I) {} node[below] {\(\mathsf{8}\)} -- +(30:0.5) +(0,0) -- +(-30:0.5) circle (2pt);

				\begin{scope}[xshift=0cm]
					\draw[thick,fill] (0,0) ++(5:2) ++(-10:1.5) ++(10:1.5) ++(-5:1) +(30:0.5) -- +(30:1) circle (2pt) node[right] {\(\mathsf{10}\)};
				\end{scope}
				\begin{scope}[xshift=0.2cm,yshift=-.2cm]
					\draw[thick,fill,blue] (0,0) ++(5:2) ++(-10:1.5) ++(10:1.5) ++(-5:1) +(-30:0.5) circle (2pt) -- +(-30:1) node[midway,below left] {\(\sfe_1^{\mathsf{9}}\)};
					\draw[thick,fill] (0,0) ++(5:2) ++(-10:1.5) ++(10:1.5) ++(-5:1) +(-30:0.5) +(-30:1) circle (2pt) node(K) {} node[above] {\(\mathsf{9}\)};
					\grape[-30]{K};
					\draw (K.center) +(-30:{1.5/sqrt(3)}) node[right] {\(\sfe_2^{\mathsf{9}}\)};
				\end{scope}
			\end{scope}
		\end{scope}
	\end{scope}
\end{scope}
\end{tikzpicture}
\)
}
\caption{Labeling convention of edges in \(\Gamma\) and its decomposition.
For brevity, some labels on edges are suppressed.}
\label{figure:labelling}
\end{figure}

For the sake of convenience, we fix a linear order on the edges.
\begin{remark}
  By using a fixed a planar embedding and the oriented root
  \((\sfv_0,\sfe_0)\), one may assign linear orders to the vertices
  and edges. One typical way is to assign linear orders by traveling
  counterclockwise along the boundary of a thickening of \(\Gamma\),
  starting from a point on \(\sfe_0\) toward \(\sfv_0\).

The labels of the vertices in \Cref{figure:stem and number of grapes,figure:labelling1} follow this convention.
Then the linear order on the edges are given as follows: let \(\sfe_1^\sfv=\sfe^\sfv\) and
\[
\sfe^{\sfv}_r < \sfe^{\sfw}_s \Longleftrightarrow \sfv<\sfw \text{ or } (\sfv=\sfw \text{ and } r<s).
\]
\end{remark}

\begin{defn}[SHE- and HE-configurations on bunches of grapes]\label{defn:SHE for grapes}
Let \(\Gamma\cong(\sfT,\loops)\).
For each \(W\subseteq V^\ess(\Gamma)\), \(\vec j=(j_\sfv)_{\sfv\in W}\in \bbZ_{\ge 1}^W\), and \(i,j,k\ge 0\), we define 
\begin{align*}
\SHE_\Gamma(W,\vec j)&=\prod_{\sfv\in W} \SHE_{\loops(\sfv),m(\sfv)}(j_\sfv),&
\HE_\Gamma(W,\vec j,k)&=\SHE_\Gamma(W,\vec j)\times\{\vec c\in \bbZ_{\ge 1}^{\norm{\vec j}}:\norm{\vec c}\le k \},\\
\SHE_\Gamma(W,j)&=\coprod_{\substack{\vec j\in \bbZ_{\ge 1}^W\\ \norm{\vec j} = j}}  \SHE_\Gamma(W,\vec j),&
\HE_\Gamma(W,j,k)&=\SHE_\Gamma(W,j)\times\{\vec c\in \bbZ_{\ge 1}^{j}:\norm{\vec c}\le k \},\\
\SHE_\Gamma^i(j) &= \coprod_{\substack{W\subseteq V^\ess(\Gamma)\\ |W|=i}} \SHE_\Gamma(W,j),&
\HE_\Gamma^i(j,k)&=\SHE_\Gamma^i(j)\times\{\vec c\in \bbZ_{\ge 1}^{j}:\norm{\vec c}\le k \},
\end{align*}
and 
\[
\HE^i_\Gamma(k)=\coprod_{j\ge0} \HE^i_\Gamma(j,k).
\]
\end{defn}

By definition, one can easily see that
\begin{align*}
|\HE_\Gamma(W,\vec j, k)|&=|\SHE_\Gamma(W,\vec j)|\binom k {\norm{\vec j}},&
|\HE_\Gamma(W,j, k)|&=|\SHE_\Gamma(W,j)|\binom k j,
\end{align*}
and
\[
|\HE_\Gamma^i(j,k)|=|\SHE_\Gamma^i(j)|\binom k j.
\]
Thus, by \Cref{theorem:decomposition formula}, \eqref{eq:P1_comb},
\Cref{equation:product}, and \Cref{defn:SHE for grapes}, we have
\begin{equation}\label{eq:1}
P_{\Gamma}^i(k) =\sum_{j\ge 0}
|\SHE_\Gamma^i(j)|\binom k j
=\sum_{j\ge 0}
|\HE_\Gamma^i(j,k)|
=|\HE_\Gamma^i(k)|.
\end{equation}

Let \(\mathscr{X}=(W,\vec j, ((\sfh^\sfv,\vec a^\sfv)), \vec c)\in \HE^i_\Gamma(j,k)\) be an HE-configuration consisting of the following data:
\begin{enumerate}
\item a subset \(W\subseteq V^\ess(\Gamma)\) with \(|W|=i\),
\item a sequence \(\vec j=(j_\sfv)\in \mathbb{Z}_{\ge 1}^W\) of positive integers with \(\norm{\vec j}=j\),
\item a sequence \(((\sfh^\sfv, \sfa^\sfv))_{\sfv\in W}\) of SHE-configurations in  \(\SHE_{\ell(\sfv),m(\sfv)}(j_\sfv)\), and
\item a sequence \(\vec c\in \mathbb{Z}_{\ge 1}^j\) of positive integer with \(\norm{\vec c}\le k\).
\end{enumerate}
In what follows, we will visualize \( \mathscr{X} \) as a
configuration on the whole graph \( \Gamma \).

First, observe that even though each SHE-configuration
\((\sfh^\sfv, \sfa^\sfv)\) is for the local graph \(\Gamma_\sfv\), one
can regard the sequence \((\sfh^\sfv, \sfa^\sfv)_{\sfv\in W}\) as a
single configuration on \(\Gamma\) consisting of dots and half edges.
See \Cref{fig:example_conf,fig:example_conf2} for examples. More
precisely, for each dot on an edge \(\sfe\) in a dot-and-half edge
configuration on \(\Gamma\), it must belong to the local graph
\(\Gamma_\sfv\) for the vertex \(\sfv\) that is the endpoint of the
edge \(\sfe\) closer to the root \(\sfv_0\).

\begin{figure}
\subcaptionbox{A SHE-configuration on the decomposition of \(\Gamma\).\label{fig:example_conf}}[.8\textwidth]{
\(
\begin{tikzpicture}[baseline=-.5ex]
\draw[thick,fill] (0,0) circle (2pt) node (A) {} node[above] {\(\mathsf{1}\)} [blue] -- ++(5:1) circle (2pt);
\grape[180]{A};
\begin{scope}[xshift=0.2cm]
	\draw[thick,fill] (0,0) +(5:1) circle (2pt) -- ++(5:2) node(B) {} node[above left] {\(\mathsf{0}\)} +(0,0) -- +(0,-0.75) circle (2pt) +(0,0) -- +(-10:0.75) circle (2pt);
	\draw[thick,fill,red] (B.center) ++(-10:0.5) circle(2pt);
	\draw[line width=3,fill,red] (0,0) +(5:1.5) -- (5:2);
	\draw[thick,fill] (B.center) circle (2pt);
	\draw[thick, fill, blue] (B.center) +(0,0.75) circle (2pt);
	\draw[thick, fill, blue, <-] (B.center)+(0,2pt) -- +(0,0.75);

	\begin{scope}[yshift=0.2cm]
		\draw[thick,fill, blue] (0,0) ++(5:2) +(0,0.75) circle (2pt) -- +(0,1.5);
		\draw[thick,fill] (0,0) ++(5:2) +(0,0.75) +(0,1.5) node (C) {} node[left=0.5ex] {\(\mathsf{13}\)};
		\grape[0]{C}; \grape[90]{C}; \grape[180]{C};
		\draw[line width=3,fill,red] (0,0) ++(5:2) ++(0,1.5) -- +(30:0.5);
		\draw[thick, fill, red] (0,0) ++(5:2) ++(0,1.5) +(90:{1.5/sqrt(3)}) circle (2pt)
			+(180:{1.5/sqrt(3)}) circle (2pt);
		\draw[thick, fill] (C.center) circle (2pt);
	\end{scope}

	\begin{scope}[yshift=-0.2cm]
		\draw[thick,fill, blue] (0,0) ++(5:2) +(0,-0.75) circle (2pt) -- +(0,-1.5);
		\draw[thick,fill] (0,0) ++(5:2) +(0,-0.75) +(0,-1.5) circle (2pt) node (D) {} node[left] {\(\mathsf{2}\)};
		\grape[-90]{D};
	\end{scope}

	\begin{scope}[xshift=0.2cm]
		\draw[thick,fill, blue] (0,0) ++(5:2) ++(-10:0.75) circle (2pt) -- ++(-10:0.75);
		\draw[thick,fill] (0,0) ++(5:2) ++(-10:0.75) ++(-10:0.75) node(E) {} node[above] {\(\mathsf{3}\)} -- +(10:0.75) circle (2pt) +(0,0) -- +(0,-0.5) circle (2pt);
		\draw[thick, fill, red] (E.center) +(10:0.5) circle (2pt);
		\draw[line width=3, fill, red] (E.center) -- +(0,-0.3);
		\draw[thick, fill] (E.center) circle (2pt);

		\begin{scope}[yshift=-0.2cm]
			\draw[thick,fill, blue] (0,0) ++(5:2) ++(-10:1.5) ++(0,-0.5) circle (2pt) -- ++(0,-0.5);
			\draw[thick,fill] (0,0) ++(5:2) ++(-10:1.5) ++(0,-0.5) ++(0,-0.5) -- +(-60:1) +(0,0) -- +(-120:0.5);
			\draw[line width=3,fill,red] (0,0) ++(5:2) ++(-10:1.5) ++(0,-0.5) ++(0,-0.5) -- +(-120:0.5);
			\draw[thick,fill] (0,0) ++(5:2) ++(-10:1.5) ++(0,-0.5) ++(0,-0.5) circle (2pt) node[left] {\(\mathsf{4}\)};

			\begin{scope}[yshift=0cm]
				\draw[thick,fill] (0,0) ++(5:2) ++(-10:1.5) ++(0,-1) +(-120:0.5) -- +(-120:1) circle (2pt) node[left] {\(\mathsf{5}\)} +(-60:0.5) -- +(-60:1) circle (2pt) node[right] {\(\mathsf{6}\)};
				\draw[thick,fill,red] (0,0) ++(5:2) ++(-10:1.5) ++(0,-0.5) ++(0,-0.5) ++(-60:0.5) circle (2pt);
			\end{scope}
		\end{scope}

		\begin{scope}[xshift=0.2cm]
			\draw[thick,fill,blue] (0,0) ++(5:2) ++(-10:1.5) ++(10:0.75) circle(2pt) -- ++(10:0.75);
			\draw[thick,fill] (0,0) ++(5:2) ++(-10:1.5) ++(10:0.75) ++(10:0.75) node(F) {} node[below=0.5ex] {\(\mathsf{7}\)} -- +(120:0.5) circle (2pt) +(0,0) -- +(60:0.5) circle (2pt) +(0,0) -- +(-5:0.5) circle (2pt);
			\grape[-90]{F};
			\draw[line width=3, fill, red] (F.center) -- +(60:0.3);
			\draw[thick, fill, red] (F.center) ++(120:0.3) circle (2pt);
			\draw[thick, fill] (F.center) circle (2pt);

			\begin{scope}[xshift=-0.2cm,yshift=0.2cm]
				\draw[thick,fill,blue] (0,0) ++(5:2) ++(-10:1.5) ++(10:1.5) +(120:0.5) circle (2pt) -- +(120:1);
				\draw[thick,fill] (0,0) ++(5:2) ++(-10:1.5) ++(10:1.5) +(120:0.5) +(120:1) circle (2pt) node(G) {} node[xshift=-1.2ex,yshift=2ex] {\(\mathsf{12}\)};
				\grape[120]{G};
			\end{scope}
			\begin{scope}[xshift=0.2cm,yshift=0.2cm]
				\draw[thick,fill,blue] (0,0) ++(5:2) ++(-10:1.5) ++(10:1.5) +(60:0.5) circle (2pt) -- +(60:1);
				\draw[thick,fill] (0,0) ++(5:2) ++(-10:1.5) ++(10:1.5) +(60:1) node(H) {} node[xshift=1.2ex,yshift=2ex] {\(\mathsf{11}\)};
				\grape[60]{H};
				\draw[line width=3, fill, red] (H.center) -- ++(90:0.5);
				\draw[thick,fill] (H.center) circle (2pt);
			\end{scope}

			\begin{scope}[xshift=0.2cm]
				\draw[thick,fill,blue] (0,0) ++(5:2) ++(-10:1.5) ++(10:1.5) ++(-5:0.5) circle (2pt) -- ++(-5:0.5);
				\draw[thick,fill] (0,0) ++(5:2) ++(-10:1.5) ++(10:1.5) ++(-5:0.5) ++(-5:0.5) node(I) {} node[below] {\(\mathsf{8}\)} -- +(30:0.5) +(0,0) -- +(-30:0.5) circle (2pt);
				\draw[line width=3, fill, red] (I.center) -- ++(-30:0.3);
				\draw[thick, fill] (I.center) circle (2pt);

				\begin{scope}[xshift=0cm]
					\draw[thick,fill] (0,0) ++(5:2) ++(-10:1.5) ++(10:1.5) ++(-5:1) +(30:0.5) -- +(30:1) circle (2pt) node[right] {\(\mathsf{10}\)};
					\draw[thick, fill, red] (I.center) +(30:0.75) circle (2pt);
				\end{scope}
				\begin{scope}[xshift=0.2cm,yshift=-0.2cm]
					\draw[thick,fill,blue] (0,0) ++(5:2) ++(-10:1.5) ++(10:1.5) ++(-5:1) +(-30:0.5) circle (2pt) -- +(-30:1);
					\draw[thick,fill] (0,0) ++(5:2) ++(-10:1.5) ++(10:1.5) ++(-5:1) +(-30:0.5) +(-30:1) node(K) {} node[above] {\(\mathsf{9}\)};
					\grape[-30]{K};
					\draw[line width=3, fill, red] (K.center) -- +(0:0.5);
					\draw[thick, fill] (K.center) circle (2pt);
				\end{scope}
			\end{scope}
		\end{scope}
	\end{scope}
\end{scope}
\end{tikzpicture}
\)
}
\subcaptionbox{The corresponding SHE-configuration on \(\Gamma\).\label{fig:example_conf2}}[.8\textwidth]{
\(
\begin{tikzpicture}[baseline=-.5ex]
\draw[thick,fill] (0,0) circle (2pt) node (A) {} -- ++(5:2) node[midway,above] {\(\sfe^\mathsf{1}\)} circle(2pt) node(B) {};
\draw[thick,fill] (B.center) +(0,1.5) circle (2pt) node (C) {};
\draw[thick,fill,<-] (B.center)+(0,2pt) -- (C.center);
\draw[thick,fill] (B.center) -- +(0,-1.5) circle (2pt) node(D) {} +(0,0) -- ++(-10:1.5) node[midway,above] {\(\sfe^\mathsf{3}\)} circle(2pt) node(E) {} -- ++(10:1.5) node[midway,below] {\(\sfe^\mathsf{7}\)} circle(2pt) node(F) {} -- +(120:1) node[midway,left] {\(\sfe^{\mathsf{12}}\)} circle(2pt) node(G) {} +(0,0) -- +(60:1) node[midway,right] {\(\sfe^{\mathsf{11}}\)} circle (2pt) node(H) {} +(0,0) -- ++(-5:1) circle (2pt) node(I) {} -- +(30:1) node[midway,above] {\(\sfe^{\mathsf{10}}\)} circle(2pt) node(J) {} +(0,0) -- +(-30:1) node[midway,below] {\(\sfe^{\mathsf{9}}\)} circle (2pt) node(K) {};
\draw[thick, fill] (E.center) -- ++(0,-1) node[midway,right] {\(\sfe^4\)} circle (2pt) -- +(-60:1) node[midway,right] {\(\sfe^6\)} circle (2pt) +(0,0) -- +(-120:1) node[midway,left] {\(\sfe^5\)} circle (2pt);
\grape[180]{A};
\grape[0]{C}; \grape[90]{C}; \grape[180]{C};
\grape[-90]{D};
\grape[-90]{F};
\grape[120]{G};
\grape[60]{H};
\grape[-30]{K};

\draw[thick,fill,red] 
(B.center) ++(-10:0.5) circle(2pt)
(C.center) +(90:{1.5/sqrt(3)}) circle (2pt) +(180:{1.5/sqrt(3)}) circle (2pt)
(E.center) +(10:0.5) circle (2pt)
(E.center) ++(0,-1) ++(-60:0.5) circle (2pt)
(F.center) ++(120:0.3) circle (2pt)
(I.center) +(30:0.75) circle (2pt)
;

\draw[line width=3,fill,red] 
(B.center) -- +(5:-0.5)
(C.center) -- +(30:0.5)
(E.center) -- +(0,-0.3)
(E.center) ++(0,-1) -- +(-120:0.5)
(F.center) -- +(60:0.3)
(H.center) -- ++(90:0.5)
(I.center) -- ++(-30:0.5)
(K.center) -- +(0:0.5);

\draw[thick, fill] 
(B.center) circle (2pt)
(C.center) circle (2pt)
(E.center) circle (2pt)
(E.center) ++(0,-1) circle (2pt)
(F.center) circle (2pt)
(H.center) circle (2pt)
(I.center) circle (2pt)
(K.center) circle (2pt);

\draw (C.center)+(0:{1.5/sqrt(3)}) node[right] {\(\sfe_2^{\mathsf{13}}\)};
\draw (C.center)+(90:{1.5/sqrt(3)}) node[above] {\(\sfe_3^{\mathsf{13}}\)};
\draw (C.center)+(180:{1.5/sqrt(3)}) node[left] {\(\sfe_4^{\mathsf{13}}\)};
\draw (H.center)+(60:{1.5/sqrt(3)}) node[above] {\(\sfe_2^{\mathsf{11}}\)};
\draw (K.center)+(-30:{1.5/sqrt(3)}) node[right] {\(\sfe_2^{\mathsf{9}}\)};
\end{tikzpicture}
\)
}
  \caption{
An example for
\(i=8, j=15,  W=\{\mathsf{0},\mathsf{3},\mathsf{4},\mathsf{7},\mathsf{8},\mathsf{9},\mathsf{11},\mathsf{13}\}\subseteq V^{\ess}(\Gamma)\), \(j_{\mathsf{0}}=j_{\mathsf{3}}=j_{\mathsf{4}}=j_{\mathsf{7}}=j_{\mathsf{8}}=2\), \( j_{\mathsf{9}}=j_{\mathsf{11}}=1\), and \(j_{\mathsf{13}}=3 \).}
  \label{fig:image20}
\end{figure}

Let \(E_0\) be the set of edges of \(\Gamma\) that contain a dot or a
half edge. For the SHE-configuration \( \Gamma \) in \Cref{fig:image20}, we have
\[
E_0=\{\sfe^{\mathsf{1}}, \sfe^{\mathsf{3}}, \sfe^{\mathsf{4}}, \sfe^{\mathsf{5}}, \sfe^{\mathsf{6}}, \sfe^{\mathsf{7}}, \sfe^{\mathsf{9}}, \sfe_2^{\mathsf{9}}, \sfe^{\mathsf{10}}, \sfe^{\mathsf{11}}, \sfe_2^{\mathsf{11}}, \sfe^{\mathsf{12}}, \sfe_2^{\mathsf{13}}, \sfe_3^{\mathsf{13}}, \sfe_4^{\mathsf{13}}\}.
\]
For a given vector \(\vec c\in \bbZ_{\ge 1}^j\) with
\(\norm{\vec c}\le k\), we assume that the entries of \(\vec c\) are
indexed by the edges in \(E_0\). We put \(c_\sfe-1\) more dots on the edge
\(\sfe\) for each \(\sfe\in E_0\), and put \(k-\norm{\vec c}\) more
dots on the root edge \(\sfe_0\).

The resulting configuration of dots and half edges is the
visualization of the given HE-configuration \(\mathscr{X}\). For the
example shown in \Cref{fig:image21}, we have \(k=31\) and the sum of
all entries in \(\vec c\) is \(27\), and therefore four more dots are
added on the root edge.

\begin{figure}
\[
\begin{tikzpicture}[baseline=-.5ex]
\draw[thick,fill] (0,0) circle (2pt) node (A) {} -- ++(5:2) circle(2pt) node(B) {};
\draw[thick,fill] (B.center) +(0,1.5) circle (2pt) node (C) {};
\draw[thick,fill,<-] (B.center)+(0,2pt) -- (C.center);
\draw[thick,fill] (B.center) -- +(0,-1.5) circle (2pt) node(D) {} +(0,0) -- ++(-10:1.5) circle(2pt) node(E) {} -- ++(10:1.5) circle(2pt) node(F) {} -- +(120:1) circle(2pt) node(G) {} +(0,0) -- +(60:1) circle (2pt) node(H) {} +(0,0) -- ++(-5:1) circle (2pt) node(I) {} -- +(30:1) circle(2pt) node(J) {} +(0,0) -- +(-30:1) circle (2pt) node(K) {};
\draw[thick, fill] (E.center) -- ++(0,-1) circle (2pt) -- +(-60:1) circle (2pt) +(0,0) -- +(-120:1) circle (2pt);
\grape[180]{A};
\grape[0]{C}; \grape[90]{C}; \grape[180]{C};
\grape[-90]{D};
\grape[-90]{F};
\grape[120]{G};
\grape[60]{H};
\grape[-30]{K};

\draw[thick,fill,red] 
(A.center) +(5:1) circle(2pt) +(5:0.75) circle (2pt) +(5:0.5) circle (2pt)
(B.center) ++(-10:0.75) circle(2pt)
(C.center) +(90:{1.5/sqrt(3)}) circle (2pt) ++(180:{1/sqrt(3)}) +(150:{0.5/sqrt(3)}) circle (2pt) +(210:{0.5/sqrt(3)}) circle (2pt)
(E.center) +(10:0.75) circle (2pt) +(0,-0.5) circle (2pt) +(0,-0.75) circle (2pt)
(E.center) ++(0,-1) ++(-60:0.75) circle (2pt)
(F.center) ++(120:0.5) circle (2pt)
(H.center) ++(60:{1/sqrt(3)}) +(0:{0.5/sqrt(3)}) circle (2pt) +(60:{0.5/sqrt(3)}) circle (2pt) +(120:{0.5/sqrt(3)}) circle (2pt)
(I.center) +(30:0.75) circle (2pt) +(30:0.5) circle (2pt)
(K.center) ++(-30:{1/sqrt(3)}) +(-60:{0.5/sqrt(3)}) circle (2pt) +(0:{0.5/sqrt(3)}) circle (2pt);
\draw[thick,fill,red] 
(B.center) +(90:0.4) circle(2pt) +(90:0.6) circle(2pt) +(90:0.8) circle(2pt) +(90:1) circle(2pt)
;

\draw[line width=3,fill,red] 
(B.center) -- +(5:-0.5)
(C.center) -- +(30:0.5)
(E.center) -- +(0,-0.3)
(E.center) ++(0,-1) -- +(-120:0.5)
(F.center) -- +(60:0.3)
(H.center) -- ++(90:0.5)
(I.center) -- ++(-30:0.5)
(K.center) -- +(0:0.5);

\draw[thick, fill] 
(B.center) circle (2pt)
(C.center) circle (2pt)
(E.center) circle (2pt)
(E.center) ++(0,-1) circle (2pt)
(F.center) circle (2pt)
(H.center) circle (2pt)
(I.center) circle (2pt)
(K.center) circle (2pt);
\end{tikzpicture}
\]
  \caption{An example of an HE-configuration for \(k=31\) and \(
\vec c =(4,1,3,1,1,1,1,3,2,1,4,1,1,1,2)
\). }
  \label{fig:image21}
\end{figure}
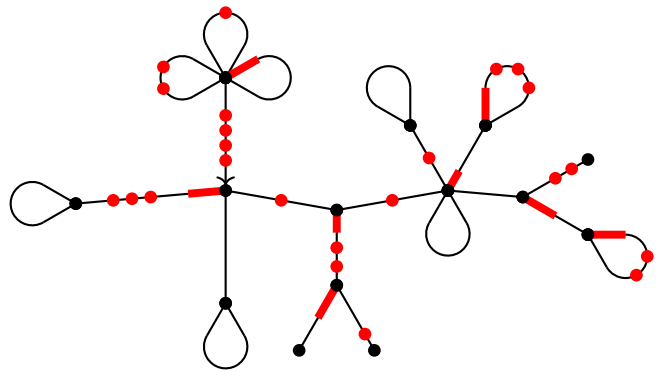

\begin{remark}
In particular, if \(i=0\), then
\(\HE^0_\Gamma(j,k)=\begin{cases}\{*\} & j=0;\\\varnothing & j\ge
  1,\end{cases}\) and so \(\HE^0_\Gamma(k)=\HE^0_\Gamma(0,k)=\{*\}\),
which corresponds to the unique HE-configuration consisting of \(k\)
dots on the root edge.
Similarly, if \(k=0\), then for any SHE-configuration with \(j>0\),
there are no corresponding HE-configurations, that is,
\(\HE^i_\Gamma(0)=\HE^i_\Gamma(0,0)\). Since
\(\SHE_{\ell(\sfv),m(\sfv)}(0)=\varnothing\) for every \(\sfv\), we
have \(\HE^i_\Gamma(0,0)\neq\varnothing\) if and only if \(i=0\), in
which case \( \HE^0_\Gamma(0,0) \) has the unique element \(*\) as well.
\end{remark}

In summary, we obtain the following combinatorial interpretations for
\( P^i_{\Gamma}(k) \) and its coefficients.

\begin{thm}\label{thm:1}
  Let \( \Gamma\cong(\sfT,\loops)\in \Grape \). Suppose that \(\sfT\)
  has at least one edge and let \((\sfv_0,\sfe_0)\) be the oriented
  root. For a nonnegative integer \( i \), we have
\[
P_{\Gamma}^i(k) =|\HE_\Gamma^i(k)|=\sum_{j\ge 0} |\SHE_\Gamma^i(j)|\binom k j .
\]
In other words, the following hold.
  \begin{enumerate}
  \item For a nonnegative integer \( k \), \( P^i_{\Gamma}(k) \) is
    the number of ways to place \( (k-i) \) dots and mark \(i\) half
    edges in \( \Gamma \) satisfying the following
    conditions:
    \begin{enumerate}
    \item For every essential vertex \( \sfv \), if there is a dot in an
      edge other than the pivot edge \( \sfe^{\sfv}_1 \) in \( \Gamma_\sfv \),
      then there is exactly one marked half edge in \( \Gamma_\sfv \) and
      the edge containing it must have a dot; otherwise,
      \( \Gamma_\sfv \) has no marked half edge. 
    \item If the marked half edge is the second half edge
      \( \sfh^2(\sfe^\sfv_{a}) \) of an edge \( \sfe^\sfv_{a} \), then
      there is a dot in an edge \( \sfe^\sfv_{b} \) for some
      \( b>a \).
    \end{enumerate}
  \item Considering \( P^i_{\Gamma}(k) \) as a polynomial in \( k \),
    its coefficient of \( \binom{k}{j} \) is the number of ways to
    place \( (j-i) \) dots and mark \(i\) half edges in \( \Gamma \)
    satisfying the following conditions:
	\begin{enumerate}
	\item Every edge has at most one dot, and the root edge \(\sfe_0\) has no dots.
	\item For each edge \(\sfe\) with a dot, the vertex adjacent to \(\sfe\) closer to the root vertex \(\sfv_0\) has a marked half edge attached to it.
    \item If a marked half edge \(\sfh\) attached to \(\sfv\) is the first half edge \( \sfh^1(\sfe^\sfv_{r}) \) of an edge
      \( \sfe^\sfv_{r} \) in \(\Gamma_\sfv\), then there is a dot in an edge \( \sfe^\sfv_{s} \) for some \( s>r \).
	\end{enumerate}
  \end{enumerate}
\end{thm}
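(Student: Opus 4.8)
The first displayed identity is precisely \eqref{eq:1}, which was already derived from \Cref{theorem:decomposition formula}, \eqref{eq:P1_comb}, \Cref{defn:conv}, and \Cref{defn:SHE for grapes}; so nothing new is needed there. The whole content of the theorem is therefore the two combinatorial interpretations, and the plan is to realize them as the two visualization maps described just before the statement and to prove that each is a bijection onto the set of configurations cut out by the listed conditions. I would prove the interpretation of the coefficients first, identifying \( \SHE_\Gamma^i(j) \) with configurations of \( (j-i) \) dots and \( i \) half edges, and then bootstrap the interpretation of \( P^i_\Gamma(k) \) from it by re-inflating the standard configuration using the multiplicity data.

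For the coefficient statement, I would unwind \Cref{defn:SHE for grapes}: an element of \( \SHE_\Gamma^i(j)=\coprod_{|W|=i}\coprod_{\norm{\vec j}=j}\prod_{\sfv\in W}\SHE_{\loops(\sfv),m(\sfv)}(j_\sfv) \) is the data of a size-\( i \) set \( W \) of essential vertices together with one standard HE-configuration \( (\sfh^\sfv,\vec b^\sfv) \) per \( \sfv\in W \). The visualization map marks the half edge \( \sfh^\sfv \) and places a dot on each non-pivot edge \( \sfe^\sfv_r \) with \( b^\sfv_r=1 \). The key bookkeeping step is the dot-attribution rule: a dot on a stem edge \( \sfe \) is assigned to the local graph \( \Gamma_\sfv \) whose vertex \( \sfv \) is the endpoint of \( \sfe \) closer to \( \sfv_0 \), under which the pivot edge \( \sfe^\sfv_1 \) of \( \Gamma_\sfv \) is exactly a leaf (non-pivot) edge of its parent local graph. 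I would then check that the constraints \( b^\sfv_1=0 \) and \( b^\sfv_r\in\{0,1\} \) of \Cref{defn:SHE} become precisely condition (2a) — at most one dot per edge, and no dot on the root edge \( \sfe_0 \), which is the unique pivot edge with no parent to absorb a dot; that \( j_\sfv\ge 1\iff \sfv\in W \) gives (2b); and that the type-1 half-edge clause of \Cref{def:HE} becomes (2c). The map is then bijective, since from a configuration satisfying (2a)--(2c) one reads off \( W \) as the set of marked vertices and reconstructs each \( (\sfh^\sfv,\vec b^\sfv) \); because \( i \) of the \( j \) occupied edges carry a half edge rather than a dot, the dot count is \( j-i \).

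For the statement about \( P^i_\Gamma(k) \), I would compose the above with the inflation step of \Cref{pro:HE=sum SHE} and \Cref{defn:SHE for grapes}: an element of \( \HE_\Gamma^i(k)=\coprod_{j}\SHE_\Gamma^i(j)\times\{\vec c\in\bbZ_{\ge1}^{j}:\norm{\vec c}\le k\} \) is a standard configuration together with a multiplicity vector \( \vec c \) on its \( j \) occupied edges; adding \( c_\sfe-1 \) dots on each occupied edge \( \sfe \) and \( k-\norm{\vec c} \) dots on \( \sfe_0 \) produces a configuration of \( (k-i) \) dots and \( i \) half edges, in agreement with the bigrading. The inverse records, from a geometric configuration, the marked vertices \( W \), the standard support obtained by collapsing each edge's dot multiplicity to \( \{0,1\} \), the multiplicities \( \vec c \) themselves, and the residual count \( k-\norm{\vec c} \) on the root edge; this is well defined exactly because conditions (1a)--(1b) encode \Cref{def:HE} together with the positivity \( \vec c\in\bbZ_{\ge1}^{j} \). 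The identity \( |\HE^i_\Gamma(j,k)|=|\SHE^i_\Gamma(j)|\binom kj \) then matches the second equality of the first display, closing the argument.

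The main obstacle I anticipate is the faithful local-to-global translation of the half-edge conditions of \Cref{def:HE} into the geometric conditions (1a)--(1b) and (2c): a first half edge \( \sfh^1(\sfe^\sfv_r) \) demands an occupied edge of strictly larger index, whereas a second half edge attached at a loop is governed only by its own loop. Making the ``larger index'' comparisons unambiguous on all of \( \Gamma \) relies on the embedding conventions of \Cref{sec:HE and SHE} — in particular that the two half edges of each loop are consecutive in the counterclockwise order — so a careful audit of how loops and their dots sit relative to the marked half edge is where the real care lies. Once this translation and the dot-attribution rule across shared stem edges are pinned down, both visualization maps are manifestly invertible, and the conditions (1a)--(1b) and (2a)--(2c) are seen to carve out exactly their images.
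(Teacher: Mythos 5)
Your proposal follows the paper's own route exactly: the displayed identity is just \eqref{eq:1} (i.e.\ \Cref{theorem:decomposition formula} combined with \eqref{eq:P1_comb}, \Cref{defn:conv}, and \Cref{defn:SHE for grapes}), and the two combinatorial readings are obtained by the same visualization maps the paper constructs before the theorem, with the same key bookkeeping point — that a dot on a shared stem edge is attributed to the local graph at the endpoint closer to the root, so that each pivot edge other than \( \sfe_0 \) is absorbed as a non-pivot edge of its parent. Your explicit observation that the \( i \) marked half edges occupy their edges in place of dots (so the dot count is \( j-i \), and the marked edge carries no dot in a standard configuration) is precisely the reading needed to make the stated conditions match the image of the visualization, so the argument is correct and essentially identical to the paper's.
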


\section{Homology cycles and HE-configurations}\label{homology cycles}
In this section, we will describe how each HE-configuration in \(\HE^i_\Gamma(k)\) can be identified with a homology cycle in \(H_i(B_k\Gamma)\) for a bunch of grapes \(\Gamma\).

\subsection{Star and loop classes}
For a general graph \(\Gamma\), according to
\cite[Section~5]{ADCK2019}, there are two types of \(1\)-cycles,
called star and loop classes, generating \(H_1(B\Gamma)\) as a
\(\mathbb{Z}[E]\)-module. To describe these classes, we need some
definitions.

We first consider generators of
\(H_1(B_2\Gamma_{0,3})\cong\mathbb{Z}\) and
\(H_1(B_1\Gamma_{1,0})\cong\mathbb{Z}\). Since \(\Gamma_{1,0}\)
is homeomorphic to a circle, it has a generator \(\beta_1\),
which is uniquely determined up to orientation. We represent
\( \beta_1 \) by a thick red loop with a small oriented gray circle at
the vertex as follows:
\begin{align*}
\beta_1 = \left[
\begin{tikzpicture}[baseline=-.5ex]
\begin{scope}[yshift=-.5cm]
\draw[thick, fill] (0,0) circle (2pt) node (A) {};
\grape[90]{A};
\draw[thick, fill, red] (120:0.5) circle (2pt);
\draw[thick, red, rounded corners] (0,{1/sqrt(3)}) ++ (-150:{0.8/sqrt(3)}) -- ++(-60:0.8) -- ++(60:0.8);
\draw[thick, red, ->] (0,{1/sqrt(3)}) ++ (-30:{0.8/sqrt(3)}) arc (-30:210:{0.8/sqrt(3)});
\end{scope}
\end{tikzpicture}
\right]
=
\begin{tikzpicture}[baseline=-.5ex]
\begin{scope}[yshift=-.5cm]
\draw[line width=3, red] (0,0) -- (120:0.5) (0,0) -- (60:0.5) (120:0.5) arc (210:-30:{0.5/sqrt(3)});
\draw[thick, fill] (0,0) circle (2pt);
\draw[thick, ->, gray] (0.2,0) arc (0:360:0.2);
\end{scope}
\end{tikzpicture}
\in H_1(B_1\Gamma_{1,0}).
\end{align*}

On the other hand, \(B_2\Gamma_{0,3}\) has a generator
\(\alpha_{123}\) that can be represented by a loop \(\gamma_{123}\)
as follows:
\begin{align*}
\gamma_{123} &= 
\left(
\begin{tikzpicture}[baseline=-.5ex]
\draw[thick, fill] (0,0) -- (-90:1) circle (2pt) node[below] {\(\sfv_1\)};
\draw[thick, fill] (0,0) circle (2pt) -- (30:1) circle (2pt) node[above right] {\(\sfv_2\)} (0,0) -- (150:1) circle (2pt) node[above left] {\(\sfv_3\)};
\draw[thick, fill, red] (150:0.5) circle (2pt) (30:0.5) circle (2pt);
\draw[thick, densely dotted, red] (-90:0.5) circle (2pt);
\draw[thick, red, ->] ({-0.5*sqrt(3)}, -0.5) ++ (60:0.6) arc (60:0:0.6);
\end{tikzpicture}
\right)
\cdot
\left(
\begin{tikzpicture}[baseline=-.5ex]
\draw[thick, fill] (0,0) -- (-90:1) circle (2pt) node[below] {\(\sfv_1\)};
\draw[thick, fill] (0,0) circle (2pt) -- (30:1) circle (2pt) node[above right] {\(\sfv_2\)} (0,0) -- (150:1) circle (2pt) node[above left] {\(\sfv_3\)};
\draw[thick, fill, red] (30:0.5) circle (2pt) (-90:0.5) circle (2pt);
\draw[thick, densely dotted, red] (150:0.5) circle (2pt);
\draw[thick, red, ->] (0,1) ++ (-60:0.6) arc (-60:-120:0.6);
\end{tikzpicture}
\right)
\cdot
\left(
\begin{tikzpicture}[baseline=-.5ex]
\draw[thick, fill] (0,0) -- (-90:1) circle (2pt) node[below] {\(\sfv_1\)};
\draw[thick, fill] (0,0) circle (2pt) -- (30:1) circle (2pt) node[above right] {\(\sfv_2\)} (0,0) -- (150:1) circle (2pt) node[above left] {\(\sfv_3\)};
\draw[thick, fill, red] (-90:0.5) circle (2pt) (150:0.5) circle (2pt);
\draw[thick, densely dotted, red] (30:0.5) circle (2pt);
\draw[thick, red, ->] ({0.5*sqrt(3)}, -0.5) ++ (-0.6,0) arc (180:120:0.6);
\end{tikzpicture}
\right).
\end{align*}
By permuting the vertices, we can consider \(\gamma_{ijk}\) and its
homology class \(\alpha_{ijk}=[\gamma_{ijk}]\) for every triple of distinct
integers \(i,j,k\) with \(1\le i,j,k\le 3\). Then it is easy to see
that
\begin{equation}\label{eq:star class}
\alpha_{123} = \alpha_{231} = \alpha_{312} = -\alpha_{132} = -\alpha_{321} = -\alpha_{213}.
\end{equation}
In particular, each \(\mathbb{Z}\langle\alpha_{ijk}\rangle\) is
isomorphic to \(H_1(B_2\Gamma_{0,3})\). Hence we may denote the
generator \(\alpha_{123}\) simply by three half edges with a
small oriented circle at the vertex as follows:
\[
\alpha_{123}=\begin{tikzpicture}[baseline=-.5ex]
\draw[thick, fill] (0,0) -- (-90:1) circle (2pt);
\draw[thick, fill] (0,0) circle (2pt) -- (30:1) circle (2pt) (0,0) -- (150:1) circle (2pt);
\draw[line width=3, red] (0,0) -- (30:0.5);
\draw[line width=3, red] (0,0) -- (150:0.5);
\draw[line width=3, red] (0,0) -- (-90:0.5);
\draw[thick, fill] (0,0) circle (2pt);
\draw[thick, ->, gray] (0.2,0) arc (0:360:0.2);
\end{tikzpicture}.
\]

Now let \(\iota:\Gamma_{0,3}\to\Gamma\) and
\(j:\Gamma_{1,0}\to\Gamma\) be some topological embeddings into a graph
\(\Gamma=(V,E)\). Let \(\alpha,\beta\in H_1(B_k\Gamma)\) be suitably
stabilized images of \(\alpha_{123}\) and \(\beta_1\) under the
induced maps \(\iota_*\) and \(j_*\), respectively. Here,
``stabilized'' means that a monomial \( p \in \bbZ[E] \) is applied.
More precisely, \(\alpha=p \cdot \iota_*(\alpha_{123})\) and
\(\beta=q \cdot j_*(\beta_1)\) for some monomials \(p, q\in\bbZ[E]\)
of degree \(k-2\) and \(k-1\). Such \( \alpha \) and \( \beta \) are
called a \emph{star} class and a \emph{loop} class in
\(H_1(B_k(\Gamma))\), respectively. Examples of star and loop classes
in an elementary bunch of grapes are depicted in \Cref{figure:star and
  loop classes}. It is important to note that the edges intersecting with the
image of \(\iota(\Gamma_{0,3})\) or \(j(\Gamma_{1,0})\) can also be
stabilized.

\begin{figure}
\subcaptionbox{Star classes with stabilization \(p=\sfe_1^2\sfe_3^2\sfe_5^2\sfe_6\sfe_7\).}{
\begin{tikzpicture}[baseline=-.5ex]
\draw[thick, fill, blue] 
(0,0) -- (-36:2) circle(2pt) node[below right] {\(*\)};
\draw[thick, fill] 
(0,0) -- (-12:2) circle(2pt) 
(0,0) -- (12:2) circle(2pt) 
(0,0) -- (36:2) circle (2pt);
\draw[thick] 
(0,0) -- (60:1) arc (-30:210:{1/sqrt(3)}) -- (0,0)
(0,0) -- (150:1) arc (60:300:{1/sqrt(3)}) -- (0,0)
(0,0) -- (240:1) arc (150:390:{1/sqrt(3)}) -- (0,0);
\draw[line width=3, red] 
(0,0) -- (12:1)
(0,0) -- (-12:1)
(0,0) -- (-36:1)
;
\draw[thick, fill] (0,0) circle (2pt);
\draw[thick, green, fill] 
(-36:1.33) circle (2pt) (-36:1.67) circle (2pt)
(-90:{3/sqrt(3)}) circle (2pt)
(12:1.5) circle (2pt) (12:1.75) circle (2pt)
(90:{2/sqrt(3)}) + (60:{1/sqrt(3)}) circle (2pt) + (120:{1/sqrt(3)}) circle (2pt)
(180:{3/sqrt(3)}) circle (2pt)
;
\draw[thick, ->, gray] (-0.3,0) arc (180:540:0.3);
\end{tikzpicture}
\qquad
\begin{tikzpicture}[baseline=-.5ex]
\draw[thick, fill, blue] 
(0,0) -- (-36:2) circle(2pt) node[below right] {\(*\)};
\draw[thick, fill] 
(0,0) -- (-12:2) circle(2pt) 
(0,0) -- (12:2) circle(2pt) 
(0,0) -- (36:2) circle (2pt);
\draw[thick] 
(0,0) -- (60:1) arc (-30:210:{1/sqrt(3)}) -- (0,0)
(0,0) -- (150:1) arc (60:300:{1/sqrt(3)}) -- (0,0)
(0,0) -- (240:1) arc (150:390:{1/sqrt(3)}) -- (0,0);
\draw[line width=3, red] 
(0,0) -- (36:1)
(0,0) -- (210:1)
(0,0) -- (120:1)
;
\draw[thick, fill] (0,0) circle (2pt);
\draw[thick, green, fill] 
(-36:1.33) circle (2pt) (-36:1.67) circle (2pt)
(12:1.33) circle (2pt) (12:1.67) circle (2pt)
(90:{2/sqrt(3)}) + (60:{1/sqrt(3)}) circle (2pt) + (120:{1/sqrt(3)}) circle (2pt)
(180:{3/sqrt(3)}) circle (2pt)
(270:{3/sqrt(3)}) circle (2pt)
;
\draw[thick, <-, gray] (-0.3,0) arc (180:540:0.3);
\end{tikzpicture}
}

\subcaptionbox{Loop classes with stabilization \(q=\sfe_1\sfe_3^2\sfe_4^2\sfe_6^3\sfe_7\).}{
\begin{tikzpicture}[baseline=-.5ex]
\draw[thick, fill, blue] 
(0,0) -- (-36:2) circle(2pt) node[below right] {\(*\)};
\draw[thick, fill] 
(0,0) -- (-12:2) circle(2pt) 
(0,0) -- (12:2) circle(2pt) 
(0,0) -- (36:2) circle (2pt);
\draw[thick] 
(0,0) -- (60:1) arc (-30:210:{1/sqrt(3)}) -- (0,0)
(0,0) -- (150:1) arc (60:300:{1/sqrt(3)}) -- (0,0)
(0,0) -- (240:1) arc (150:390:{1/sqrt(3)}) -- (0,0);
\draw[line width=3, red] 
(0,0) -- (120:1) arc (210:-30:{1/sqrt(3)}) -- cycle;
\draw[thick, fill] (0,0) circle (2pt);
\draw[thick, green, fill] 
(-36:1.5) circle (2pt)
(12:1.33) circle (2pt) (12:1.67) circle (2pt)
(36:1.33) circle (2pt) (36:1.67) circle (2pt)
(180:{2/sqrt(3)}) + (90:{1/sqrt(3)}) circle (2pt) + (270:{1/sqrt(3)}) circle (2pt) + (180:{1/sqrt(3)}) circle (2pt)
(270:{3/sqrt(3)}) circle (2pt)
;
\draw[thick, ->, gray] (-0.3,0) arc (180:540:0.3);
\end{tikzpicture}
\qquad
\begin{tikzpicture}[baseline=-.5ex]
\draw[thick, fill, blue] 
(0,0) -- (-36:2) circle(2pt) node[below right] {\(*\)};
\draw[thick, fill] 
(0,0) -- (-12:2) circle(2pt) 
(0,0) -- (12:2) circle(2pt) 
(0,0) -- (36:2) circle (2pt);
\draw[thick] 
(0,0) -- (60:1) arc (-30:210:{1/sqrt(3)}) -- (0,0)
(0,0) -- (150:1) arc (60:300:{1/sqrt(3)}) -- (0,0)
(0,0) -- (240:1) arc (150:390:{1/sqrt(3)}) -- (0,0);
\draw[line width=3, red] 
(0,0) -- (300:1) arc (390:150:{1/sqrt(3)}) -- cycle;
\draw[thick, fill] (0,0) circle (2pt);
\draw[thick, green, fill] 
(-36:1.5) circle (2pt)
(12:1.33) circle (2pt) (12:1.67) circle (2pt)
(36:1.33) circle (2pt) (36:1.67) circle (2pt)
(180:{2/sqrt(3)}) + (90:{1/sqrt(3)}) circle (2pt) + (270:{1/sqrt(3)}) circle (2pt) + (180:{1/sqrt(3)}) circle (2pt)
(270:{3/sqrt(3)}) circle (2pt)
;
\draw[thick, ->, gray] (-0.3,0) arc (180:540:0.3);
\end{tikzpicture}
}
\caption{Star and loop classes in \(H_1(B_{10}\Gamma_{3,4})\). Here, stabilizations are denoted by green dots.}
\label{figure:star and loop classes}
\end{figure}

\begin{thm}[\cite{ADCK2019}]\label{thm:SL-gen}
  The star classes and loop classes generate \(H_1(B\Gamma)\) as an
  \(\mathbb{F}[E]\)-module.
\end{thm}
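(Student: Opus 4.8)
The plan is to compute $H_1(B\Gamma)=H_1(B_\star\Gamma)$ through the \'Swi\k atkowski complex $S_\bullet\Gamma$ of \eqref{eq:Swiatkowski}, which is chain equivalent to $C_\bullet(B_\star\Gamma)$ as $\bbF[E]$-modules by \Cref{thm:ADCK2020}, and then to read off the degree-$1$ homology module directly. Each local complex $S_\sfv$ is concentrated in homological degrees $0$ and $1$, with degree-$0$ part $\bbF[E]\langle\varnothing_\sfv,\sfv\rangle$ and degree-$1$ part $\bbF[E]\langle\mathcal{H}_\sfv\rangle$, so the total complex $\bigotimes_\sfv S_\sfv$ has its degree-$n$ part supported on the choices of $n$ vertices each carrying a half edge while the remaining vertices carry a degree-$0$ generator $\varnothing_\sfw$ or $\sfw$. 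In particular a degree-$1$ generator is $\sfh\otimes\bigotimes_{\sfw\neq\sfv}g_\sfw$ with $\sfh\in\mathcal{H}_\sfv$ and $g_\sfw\in\{\varnothing_\sfw,\sfw\}$, and $\partial$ acts only on the half-edge factor via $\sfh\mapsto\sfe(\sfh)\varnothing_\sfv-\sfv$.

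The technical heart is the single-vertex computation. Fixing $\sfv$ and placing $\varnothing_\sfw$ at every other vertex, a degree-$1$ chain $z=\sum_{\sfh\in\mathcal{H}_\sfv}p_\sfh\,\sfh$ is a cycle precisely when $\sum_\sfh p_\sfh=0$ and $\sum_\sfh p_\sfh\,\sfe(\sfh)=0$; that is, the degree-$1$ cycles at $\sfv$ form the module of syzygies of the $2\times\deg(\sfv)$ matrix whose columns are $(1,\sfe(\sfh))$. The columns that coincide are exactly the pairs of half edges of a common loop, and they contribute the relations $\sfh-\sfh'$, which are the loop classes. After the linear change of variables $\sfe\mapsto\sfe-\sfe(\sfh_0)$ the distinct edge variables become independent coordinates, hence a regular sequence, so the remaining syzygies are Koszul and are generated by the three-term relations among triples of distinct edges—these are precisely the star classes. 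Thus the degree-$1$ cycles at a single vertex are generated over $\bbF[E]$ by star and loop classes.

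To globalize, I would first reduce an arbitrary degree-$1$ cycle, modulo the image of $\partial_2$, to a sum of single-vertex cycles carrying $\varnothing_\sfw$ at every other vertex. The degree-$2$ generators are $\sfh\otimes\sfh'$ supported on two distinct vertices, and their boundaries furnish relations that trade an occupied-vertex marker $\sfw$ in the background for an edge-stabilized $\sfe(\sfh')\varnothing_\sfw$, at the cost of byproduct terms in which the degree-$1$ slot has moved to $\sfw$. I expect this localization to be the main obstacle: one must organize these substitutions—for instance by inducting on the number of background markers $\sfw$, or by a Farley--Sabalka-style discrete Morse matching on $S_\bullet\Gamma$—so that the process terminates and the cross-vertex entanglement is removed without reintroducing new markers.

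Once every cycle has been localized, the degree-$0$ images at distinct vertices lie in independent tensor factors, so the cycle condition splits as $\partial z_\sfv=0$ separately for each $\sfv$, and the single-vertex result applies factorwise. Stabilizing by the appropriate monomials in $\bbF[E]$ then realizes each $z_\sfv$ as an $\bbF[E]$-combination of star and loop classes, which completes the argument that these classes generate $H_1(B\Gamma)$. The only delicate point beyond the localization is bookkeeping the signs and the stabilizing monomials so that the abstract syzygy generators match the geometric star and loop classes described before the statement; this is routine once the matching in the localization step is fixed.
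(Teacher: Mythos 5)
First, a point of comparison: the paper does not prove this statement at all---it is imported verbatim from \cite{ADCK2019}---so there is no internal proof to measure your argument against; it has to stand on its own. Your single-vertex computation is correct and matches the geometry: the cycles supported at one vertex with $\varnothing_\sfw$ everywhere else are the syzygies of the $2\times\deg(\sfv)$ matrix with columns $(1,\sfe(\sfh))$, and these are indeed generated by the differences $\sfh-\sfh'$ of the two half edges of a self-loop together with the Koszul three-term relations, which are the classes $\beta_1$ and $\alpha_{ijk}$.

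The global step, however, contains a genuine gap and, as stated, an error. The localization of an arbitrary $1$-cycle to single-vertex cycles is the real content of the theorem, and you only describe a strategy for it (``inducting on the number of background markers, or a discrete Morse matching'') without executing it. Worse, the endpoint you aim for is unreachable, because the final splitting claim is false: even when every background slot carries $\varnothing_\sfw$, the boundary of a half edge $\sfh$ at $\sfv$ is $\sfe(\sfh)\bigotimes_\sfw\varnothing_\sfw-\sfv\otimes\bigotimes_{\sfw\ne\sfv}\varnothing_\sfw$, and the first term lands in the \emph{single} degree-zero generator $\bigotimes_\sfw\varnothing_\sfw$ shared by all vertices. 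The cycle condition is therefore one equation $\sum_{\sfh\in\mathcal{H}_\sfv}p_\sfh=0$ per vertex plus a single \emph{global} equation $\sum_\sfv\sum_{\sfh\in\mathcal{H}_\sfv}p_\sfh\,\sfe(\sfh)=0$; it does not decouple vertexwise. The syzygies that genuinely couple several vertices through this global equation are exactly the loop classes carried by embedded circles of length at least two, which your framework never produces: your ``loop classes'' are only the self-loop differences, whereas the theorem's loop classes are $q\cdot j_*(\beta_1)$ for an arbitrary embedding $j:\Gamma_{1,0}\to\Gamma$. Since the theorem is stated for a general graph, this is fatal: for $\Gamma$ a triangle (or a triangle with a pendant edge, if you want an essential vertex) there are no self-loops, so your proposed generating set for $H_1(B_1\Gamma)=H_1(\Gamma)\cong\bbF$ is empty. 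To repair the argument you must either pass to a reduced model in which the cross-vertex syzygies are analyzed explicitly, or show directly that, modulo $\partial_2$, the entangled cycles are generated by star classes together with loop classes of embedded circles of arbitrary length; neither is routine bookkeeping. (For bunches of grapes every embedded circle is a self-loop, so the discrepancy disappears in the setting of this paper, but the statement being proved is the general one.)
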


The relations among star classes and loop classes are also
given in \cite{ADCK2019}. There are three useful relations coming from
\(H_1(B_1\Gamma_{0,4})\) and \(H_1(B_1\Gamma_{1,1})\), called the
\emph{\(X\)-relations} (\( X_1 \)), the \emph{stabilized
  \(X\)-relations} (\( X_2 \)), and the \emph{\(Q\)-relation}
\( (Q) \) as follows:
\begin{align}
\begin{tikzpicture}[baseline=-.5ex]
\draw[thick, fill] (-1,0) circle (2pt) -- (1,0) circle (2pt) (0,-1) circle (2pt) -- (0,1) circle (2pt);
\draw[line width=3, red] (0,0) -- (0.5,0) (0,0) -- (0,0.5) (0,0) -- (-0.5, 0);
\draw[thick, ->, gray] (-0.3,0) arc (180:540:0.3);
\draw[thick, fill] (0,0) circle (2pt);
\end{tikzpicture}
+
\begin{tikzpicture}[baseline=-.5ex]
\draw[thick, fill] (-1,0) circle (2pt) -- (1,0) circle (2pt) (0,-1) circle (2pt) -- (0,1) circle (2pt);
\draw[line width=3, red] (0,0) -- (0.5,0) (0,0) -- (0,0.5) (0,0) -- (0,-0.5);
\draw[thick, <-, gray] (-0.3,0) arc (180:540:0.3);
\draw[thick, fill] (0,0) circle (2pt);
\end{tikzpicture}
+
\begin{tikzpicture}[baseline=-.5ex]
\draw[thick, fill] (-1,0) circle (2pt) -- (1,0) circle (2pt) (0,-1) circle (2pt) -- (0,1) circle (2pt);
\draw[line width=3, red] (0,0) -- (0.5,0) (0,0) -- (0,-0.5) (0,0) -- (-0.5, 0);
\draw[thick, ->, gray] (-0.3,0) arc (180:540:0.3);
\draw[thick, fill] (0,0) circle (2pt);
\end{tikzpicture}
+
\begin{tikzpicture}[baseline=-.5ex]
\draw[thick, fill] (-1,0) circle (2pt) -- (1,0) circle (2pt) (0,-1) circle (2pt) -- (0,1) circle (2pt);
\draw[line width=3, red] (0,0) -- (0,-0.5) (0,0) -- (0,0.5) (0,0) -- (-0.5, 0);
\draw[thick, <-, gray] (-0.3,0) arc (180:540:0.3);
\draw[thick, fill] (0,0) circle (2pt);
\end{tikzpicture}
&=0 \in H_1(B_2\Gamma_{0,4}),
\tag{\(X_1\)}\label{eq:X_1}
\\
\begin{tikzpicture}[baseline=-.5ex]
\draw[thick, fill] (-1,0) circle (2pt) -- (1,0) circle (2pt) (0,-1) circle (2pt) -- (0,1) circle (2pt);
\draw[line width=3, red] (0,0) -- (0.5,0) (0,0) -- (0,0.5) (0,0) -- (-0.5, 0);
\draw[thick, ->, gray] (-0.3,0) arc (180:540:0.3);
\draw[thick, fill, green] (0,-0.67) circle (2pt);
\draw[thick, fill] (0,0) circle (2pt);
\end{tikzpicture}
+
\begin{tikzpicture}[baseline=-.5ex]
\draw[thick, fill] (-1,0) circle (2pt) -- (1,0) circle (2pt) (0,-1) circle (2pt) -- (0,1) circle (2pt);
\draw[line width=3, red] (0,0) -- (0.5,0) (0,0) -- (0,0.5) (0,0) -- (0,-0.5);
\draw[thick, <-, gray] (-0.3,0) arc (180:540:0.3);
\draw[thick, fill, green] (-0.67,0) circle (2pt);
\draw[thick, fill] (0,0) circle (2pt);
\end{tikzpicture}
+
\begin{tikzpicture}[baseline=-.5ex]
\draw[thick, fill] (-1,0) circle (2pt) -- (1,0) circle (2pt) (0,-1) circle (2pt) -- (0,1) circle (2pt);
\draw[line width=3, red] (0,0) -- (0.5,0) (0,0) -- (0,-0.5) (0,0) -- (-0.5, 0);
\draw[thick, ->, gray] (-0.3,0) arc (180:540:0.3);
\draw[thick, fill, green] (0,0.67) circle (2pt);
\draw[thick, fill] (0,0) circle (2pt);
\end{tikzpicture}
+
\begin{tikzpicture}[baseline=-.5ex]
\draw[thick, fill] (-1,0) circle (2pt) -- (1,0) circle (2pt) (0,-1) circle (2pt) -- (0,1) circle (2pt);
\draw[line width=3, red] (0,0) -- (0,-0.5) (0,0) -- (0,0.5) (0,0) -- (-0.5, 0);
\draw[thick, <-, gray] (-0.3,0) arc (180:540:0.3);
\draw[thick, fill, green] (0.67,0) circle (2pt);
\draw[thick, fill] (0,0) circle (2pt);
\end{tikzpicture}
&=0 \in H_1(B_3\Gamma_{0,4}),
\tag{\(X_2\)}\label{eq:X_2}
\\
\begin{tikzpicture}[baseline=-.5ex]
\draw[thick, fill] (0,0) node (A) {} -- (1,0) circle (2pt);
\grape[180]{A};
\draw[line width=3, red] (0,0) -- (210:0.5) (0,0) -- (150:0.5) (210:0.5) arc (300:60:{0.5/sqrt(3)});
\draw[thick, ->, gray] (-0.3,0) arc (180:540:0.3);
\draw[thick, fill, green] (0.67,0) circle (2pt);
\draw[thick, fill] (0,0) circle (2pt);
\end{tikzpicture}
+
\begin{tikzpicture}[baseline=-.5ex]
\draw[thick, fill] (0,0) node (A) {} -- (1,0) circle (2pt);
\grape[180]{A};
\draw[line width=3, red] (0,0) -- (210:0.5) (0,0) -- (150:0.5) (210:0.5) arc (300:60:{0.5/sqrt(3)});
\draw[thick, <-, gray] (-0.3,0) arc (180:540:0.3);
\draw[thick, fill, green] ({-1.5/sqrt(3)},0) circle (2pt);
\draw[thick, fill] (0,0) circle (2pt);
\end{tikzpicture}
+
\begin{tikzpicture}[baseline=-.5ex]
\draw[thick, fill] (0,0) node (A) {} -- (1,0) circle (2pt);
\grape[180]{A};
\draw[line width=3, red] (0,0) -- (210:0.5) (0,0) -- (150:0.5) (0,0) -- (0.5,0);
\draw[thick, ->, gray] (-0.3,0) arc (180:540:0.3);
\draw[thick, fill] (0,0) circle (2pt);
\end{tikzpicture}
&=0 \in H_1(B_2\Gamma_{1,1}).
\tag{\(Q\)}\label{eq:Q}
\end{align}

Recall that given an embedding of a bunch of graphs \( \Gamma \) and
its essential vertex \( \sfv \), the edges of \( \Gamma_\sfv \) are
labeled by \( \sfe^{\sfv}_1,\dots,\sfe^{\sfv}_{\ell(\sfv)+m(\sfv)} \)
in counterclockwise order, and \( \sfe^{\sfv}_1 \) is the pivot edge
of \( \Gamma_\sfv \).

\begin{defn}[standard star classes]
  Let \(\Gamma\) be a bunch of grapes. We fix a planar embedding of
  \(\Gamma\) along with its oriented root. A star class
  \(\alpha\in H_1(B_k\Gamma_{\ell,m})\) is \emph{standard} if there
  exist an embedding \(\iota:\Gamma_{0,3}\to\Gamma\) and a monic
  monomial \(p\in \mathbb{Z}[E]\) such that
  \(\alpha=p\cdot\iota_*(\alpha_{123})\) and
\begin{enumerate}
\item \(\iota(\Gamma_{0,3})\) is the union of the first half edges of
  the edges \(\sfe_1^\sfv \), \( \sfe_r^\sfv \), and \( \sfe_s^\sfv \)
  for some essential vertex \(\sfv\) and \(1<r<s\),
\item the monomial \( p \) contains no variable \(\sfe_j^\sfv\) for each \(r<j<s\).
\end{enumerate}
\end{defn}

The notion of standard star classes can be defined consistently for
any graph \(\Gamma\), but we omit the detail.

\subsection{The first Homology classes for elementary bunches of grapes}
Let us consider an elementary bunch of grapes \(\Gamma_{\ell,m}\) with \(m\ge 1\).
We denote the sets of standard star classes and loop classes in \(H_1(B_k\Gamma_{\ell,m})\) by \(S^1_{\ell,m}(k)\) and \(L^1_{\ell,m}(k)\), respectively:
\begin{align*}
S^1_{\ell,m}(k)&\coloneqq \left\{\alpha\in H_1(B_k\Gamma_{\ell,m}): \alpha\text{ is a standard star class}\right\},&
S^1_{\ell,m}&=\coprod_{k\ge 0}S^1_{\ell,m}(k),\\
L^1_{\ell,m}(k)&\coloneqq\left\{\beta\in H_1(B_k\Gamma_{\ell,m}): \beta\text{ is a loop class}\right\},&
L^1_{\ell,m}&=\coprod_{k\ge 0}L^1_{\ell,m}(k).
\end{align*}

\begin{lem}\label{lem:standard star}
The first homology group \(H_1(B\Gamma_{\ell,m})\) is generated by \(S^1_{\ell,m}\sqcup L^1_{\ell,m}\) as an \(\bbF[E]\)-module.
\end{lem}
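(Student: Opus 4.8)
The plan is to start from \Cref{thm:SL-gen}, which already guarantees that $H_1(B\Gamma_{\ell,m})$ is generated over $\bbF[E]$ by the collection of all star and loop classes. It therefore suffices to rewrite an arbitrary star class as an $\bbF[E]$-linear combination of standard star classes and loop classes. I would carry this out by a reduction procedure built from the three relations \eqref{eq:X_1}, \eqref{eq:X_2}, and \eqref{eq:Q}, together with the antisymmetry \eqref{eq:star class}, organized as an induction on a suitable complexity of a star class. A general star class has the form $p\cdot\iota_*(\alpha_{123})$, where $\iota$ sends the three edges of $\Gamma_{0,3}$ to three of the half edges attached to the unique essential vertex $\sfv$ (noting that each half edge of a loop is a legitimate arc of an embedded $\Gamma_{0,3}$ or $\Gamma_{0,4}$), and $p$ is a monic monomial.

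The first reduction eliminates second half edges. If the star uses some $\sfh^2(\sfe)$ for a loop $\sfe$, I would bring in its partner $\sfh^1(\sfe)$ by applying \eqref{eq:X_1} to the four half edges consisting of the three in the star together with $\sfh^1(\sfe)$, and then solving for the original star in terms of three stars each containing $\sfh^1(\sfe)$. Any resulting star using both halves of the same loop together with a non-loop half edge (one exists since $m\ge1$) matches the $\Gamma_{1,1}$-configuration and is converted by \eqref{eq:Q} into loop classes, while the remaining terms use strictly fewer second half edges. Iterating yields a combination of loop classes and stars using only first half edges.

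The second and third reductions operate purely among first half edges, so they cannot revive a second half edge. For a star not using the pivot $\sfh^1(\sfe_1)$, I would apply \eqref{eq:X_1} to the three first half edges in the star together with $\sfh^1(\sfe_1)$, expressing it through three stars each containing the pivot. Finally, once a star is $p\cdot\iota_*(\alpha_{123})$ on $\sfh^1(\sfe_1),\sfh^1(\sfe_r),\sfh^1(\sfe_s)$ with $1<r<s$, and $p$ contains a variable $\sfe_j$ with $r<j<s$, I would use the stabilized relation \eqref{eq:X_2} on $\sfh^1(\sfe_r),\sfh^1(\sfe_j),\sfh^1(\sfe_s)$ and the pivot, placing the stabilizing dot on $\sfe_j$, to trade the class for standard stars whose index gap $(r,s)$ is strictly narrower. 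Finitely many such steps clear all stabilization lying in the open gap and produce standard star classes, exactly as required by the definition of standard star classes.

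The main obstacle is termination and non-interference. I would need a single well-founded complexity measure — lexicographically, the number of second-half-edge usages, then the indicator that the pivot is absent, then the total degree of $p$ in the variables strictly between the two outer indices — and I must verify that each of the three reductions strictly decreases it without resurrecting an earlier defect (in particular that the pivot-introduction and gap-narrowing steps, being confined to first half edges, never create a new second half edge, and that \eqref{eq:X_2} genuinely narrows rather than widens the gap). In parallel, the alternating orientations and signs appearing in \eqref{eq:X_1}, \eqref{eq:X_2}, and \eqref{eq:Q} must be tracked, and each embedded $\Gamma_{0,4}$ or $\Gamma_{1,1}$ invoked must be checked to exist with the prescribed half edges and stabilization, so that every rewriting step stays inside the $\bbF[E]$-span of standard star and loop classes.
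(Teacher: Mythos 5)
Your proposal is correct and follows essentially the same route as the paper: invoke \Cref{thm:SL-gen} and then normalize an arbitrary star class into standard stars and loop classes using \eqref{eq:X_1}, \eqref{eq:X_2}, and \eqref{eq:Q}, differing only in the order of the reductions (you clear second half edges first, the paper last) and in making the termination measure explicit. The one detail you flag as needing verification is resolved in the paper exactly as you suspect it must be: a bare application of \eqref{eq:X_2} produces a term not containing the pivot, so the paper subtracts \eqref{eq:X_2} from the pivot-stabilized \eqref{eq:X_1} to obtain a gap-narrowing relation all of whose terms still contain the pivot half edge.
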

\begin{proof}
  By \Cref{thm:SL-gen}, it suffices to show that every star class can
  be expressed as a sum of stabilized elements in
  \(S^1_{\ell,m}\sqcup L^1_{\ell,m}\). Recall that the degree of the
  central vertex in \(\Gamma_{\ell,m}\) is \(2\ell+m\).

  If \(2\ell+m\le 2\), then there are neither star classes nor loop
  classes and we have \(H_1(B_k\Gamma_{\ell,m})=0\). Hence, the
  statement holds.

  If \(2\ell+m=3\), then we have \((\ell,m)=(0,3)\) or
  \((\ell,m)=(1,1)\). If \((\ell,m)=(0,3)\), then every star class is
  standard, so we are done. If \((\ell,m)=(1,1)\), then by
  \eqref{eq:Q}, every star class can be written as the difference of
  two stabilized loop classes. Therefore the claim holds.

  Suppose now that \(2\ell+m\ge 4\) and consider a star class
  \(\alpha\in H_1(B_k\Gamma_{\ell,m})\). Using \eqref{eq:X_1}, with
  the bottom edge as the pivot edge \( \sfe_1 \), if \( \alpha \) does
  not contain \( \sfe_1 \), then we can express it in terms of star
  classes containing \( \sfe_1 \). Hence, we may assume that
  \(\alpha\) is represented by three half edges \( \sfh_1 \),
  \( \sfh_r \), and \( \sfh_s \), oriented counterclockwise, where
  \( \sfh_1 \) is the first half edge of the pivot edge \( \sfe_1 \).

  On the other hand, stabilizing the bottom edge in \eqref{eq:X_1} and
  subtracting \eqref{eq:X_2} yields the following relation:
\[
\begin{tikzpicture}[baseline=-.5ex]
\draw[thick, fill] (-1,0) circle (2pt) -- (1,0) circle (2pt) (0,-1) circle (2pt) -- (0,1) circle (2pt);
\draw[line width=3, red] (0,0) -- (0.5,0) (0,0) -- (0,0.5) (0,0) -- (0,-0.5);
\draw[thick, <-, gray] (-0.3,0) arc (180:540:0.3);
\draw[thick, fill] (0,0) circle (2pt);
\draw[thick, fill, green] (0,-0.67) circle (2pt);
\end{tikzpicture}
+
\begin{tikzpicture}[baseline=-.5ex]
\draw[thick, fill] (-1,0) circle (2pt) -- (1,0) circle (2pt) (0,-1) circle (2pt) -- (0,1) circle (2pt);
\draw[line width=3, red] (0,0) -- (0.5,0) (0,0) -- (0,-0.5) (0,0) -- (-0.5, 0);
\draw[thick, ->, gray] (-0.3,0) arc (180:540:0.3);
\draw[thick, fill] (0,0) circle (2pt);
\draw[thick, fill, green] (0,-0.67) circle (2pt);
\end{tikzpicture}
+
\begin{tikzpicture}[baseline=-.5ex]
\draw[thick, fill] (-1,0) circle (2pt) -- (1,0) circle (2pt) (0,-1) circle (2pt) -- (0,1) circle (2pt);
\draw[line width=3, red] (0,0) -- (0,-0.5) (0,0) -- (0,0.5) (0,0) -- (-0.5, 0);
\draw[thick, <-, gray] (-0.3,0) arc (180:540:0.3);
\draw[thick, fill] (0,0) circle (2pt);
\draw[thick, fill, green] (0,-0.67) circle (2pt);
\end{tikzpicture}
  =
  \begin{tikzpicture}[baseline=-.5ex]
\draw[thick, fill] (-1,0) circle (2pt) -- (1,0) circle (2pt) (0,-1) circle (2pt) -- (0,1) circle (2pt);
\draw[line width=3, red] (0,0) -- (0.5,0) (0,0) -- (0,0.5) (0,0) -- (0,-0.5);
\draw[thick, <-, gray] (-0.3,0) arc (180:540:0.3);
\draw[thick, fill, green] (-0.67,0) circle (2pt);
\draw[thick, fill] (0,0) circle (2pt);
\end{tikzpicture}
+
\begin{tikzpicture}[baseline=-.5ex]
\draw[thick, fill] (-1,0) circle (2pt) -- (1,0) circle (2pt) (0,-1) circle (2pt) -- (0,1) circle (2pt);
\draw[line width=3, red] (0,0) -- (0.5,0) (0,0) -- (0,-0.5) (0,0) -- (-0.5, 0);
\draw[thick, ->, gray] (-0.3,0) arc (180:540:0.3);
\draw[thick, fill, green] (0,0.67) circle (2pt);
\draw[thick, fill] (0,0) circle (2pt);
\end{tikzpicture}
+
\begin{tikzpicture}[baseline=-.5ex]
\draw[thick, fill] (-1,0) circle (2pt) -- (1,0) circle (2pt) (0,-1) circle (2pt) -- (0,1) circle (2pt);
\draw[line width=3, red] (0,0) -- (0,-0.5) (0,0) -- (0,0.5) (0,0) -- (-0.5, 0);
\draw[thick, <-, gray] (-0.3,0) arc (180:540:0.3);
\draw[thick, fill, green] (0.67,0) circle (2pt);
\draw[thick, fill] (0,0) circle (2pt);
\end{tikzpicture}
\]
Using this relation, again with the bottom edge as the pivot edge, we
can express the middle configuration in the right-hand side in terms
of the other five configurations. Thus, we may also assume that
\( \alpha \) has no stabilizations on the edges lying between the two
half edges \( \sfh_r \) and \( \sfh_s \).

It remains show that we may also assume that \( \alpha \) does not
contain any second half edge. Suppose that \( \sfh_s \) is the second
half edge of some edge \( \sfe_t \). (The case that \( \sfh_r \) is a
second half edge can be proved similarly.) Then \( \alpha \) is
represented as follows:
\[
\begin{tikzpicture}[baseline=-.5ex]
\draw[thick, fill] (0,0) node (A) {} -- (1,0) circle (2pt);
\grape[120]{A};
\draw[line width=3, red] (0,0) -- (-90:0.5) (0,0) -- (150:0.5) (0,0) -- (0.5,0);
\draw[thick, ->, gray] (-0.3,0) arc (180:540:0.3);
\draw[thick, fill] (0,0) circle (2pt) -- (0, -1) circle (2pt);
\end{tikzpicture}
\]
Applying \eqref{eq:X_1} and \eqref{eq:Q} to this shape, we obtain the following relations:
\[
\begin{tikzpicture}[baseline=-.5ex]
\draw[thick, fill] (0,0) node (A) {} -- (1,0) circle (2pt);
\grape[120]{A};
\draw[line width=3, red] (0,0) -- (90:0.5) (0,0) -- (150:0.5) (0,0) -- (0.5,0);
\draw[thick, ->, gray] (-0.3,0) arc (180:540:0.3);
\draw[thick, fill] (0,0) circle (2pt) -- (0, -1) circle (2pt);
\end{tikzpicture}
+
\begin{tikzpicture}[baseline=-.5ex]
\draw[thick, fill] (0,0) node (A) {} -- (1,0) circle (2pt);
\grape[120]{A};
\draw[line width=3, red] (0,0) -- (90:0.5) (0,0) -- (-90:0.5) (0,0) -- (0.5,0);
\draw[thick, <-, gray] (-0.3,0) arc (180:540:0.3);
\draw[thick, fill] (0,0) circle (2pt) -- (0, -1) circle (2pt);
\end{tikzpicture}
+
\begin{tikzpicture}[baseline=-.5ex]
\draw[thick, fill] (0,0) node (A) {} -- (1,0) circle (2pt);
\grape[120]{A};
\draw[line width=3, red] (0,0) -- (-90:0.5) (0,0) -- (150:0.5) (0,0) -- (0.5,0);
\draw[thick, ->, gray] (-0.3,0) arc (180:540:0.3);
\draw[thick, fill] (0,0) circle (2pt) -- (0, -1) circle (2pt);
\end{tikzpicture}
+
\begin{tikzpicture}[baseline=-.5ex]
\draw[thick, fill] (0,0) node (A) {} -- (1,0) circle (2pt);
\grape[120]{A};
\draw[line width=3, red] (0,0) -- (90:0.5) (0,0) -- (150:0.5) (0,0) -- (-90:0.5);
\draw[thick, <-, gray] (-0.3,0) arc (180:540:0.3);
\draw[thick, fill] (0,0) circle (2pt) -- (0, -1) circle (2pt);
\end{tikzpicture}
=0=
  \begin{tikzpicture}[baseline=-.5ex]
\draw[thick, fill] (0,0) node (A) {} -- (1,0) circle (2pt);
\grape[120]{A};
\draw[line width=3, red] (0,0) -- (150:0.5) (0,0) -- (90:0.5) (150:0.5) arc (240:0:{0.5/sqrt(3)});
\draw[thick, ->, gray] (-0.3,0) arc (180:540:0.3);
\draw[thick, fill, green] (0.67,0) circle (2pt);
\draw[thick, fill] (0,0) circle (2pt) -- (0, -1) circle (2pt);
\end{tikzpicture}
+
\begin{tikzpicture}[baseline=-.5ex]
\draw[thick, fill] (0,0) node (A) {} -- (1,0) circle (2pt);
\grape[120]{A};
\draw[line width=3, red] (0,0) -- (150:0.5) (0,0) -- (90:0.5) (150:0.5) arc (240:0:{0.5/sqrt(3)});
\draw[thick, <-, gray] (-0.3,0) arc (180:540:0.3);
\draw[thick, fill, green] (-.44,.74) circle (2pt);
\draw[thick, fill] (0,0) circle (2pt) -- (0, -1) circle (2pt);
\end{tikzpicture}
+
\begin{tikzpicture}[baseline=-.5ex]
\draw[thick, fill] (0,0) node (A) {} -- (1,0) circle (2pt);
\grape[120]{A};
\draw[line width=3, red] (0,0) -- (90:0.5) (0,0) -- (150:0.5) (0,0) -- (0.5,0);
\draw[thick, ->, gray] (-0.3,0) arc (180:540:0.3);
\draw[thick, fill] (0,0) circle (2pt) -- (0, -1) circle (2pt);
\end{tikzpicture}
\]
Therefore, we can express \( \alpha \) as follows:
\[
\begin{tikzpicture}[baseline=-.5ex]
\draw[thick, fill] (0,0) node (A) {} -- (1,0) circle (2pt);
\grape[120]{A};
\draw[line width=3, red] (0,0) -- (-90:0.5) (0,0) -- (150:0.5) (0,0) -- (0.5,0);
\draw[thick, ->, gray] (-0.3,0) arc (180:540:0.3);
\draw[thick, fill] (0,0) circle (2pt) -- (0, -1) circle (2pt);
\end{tikzpicture}
=
  \begin{tikzpicture}[baseline=-.5ex]
\draw[thick, fill] (0,0) node (A) {} -- (1,0) circle (2pt);
\grape[120]{A};
\draw[line width=3, red] (0,0) -- (150:0.5) (0,0) -- (90:0.5) (150:0.5) arc (240:0:{0.5/sqrt(3)});
\draw[thick, ->, gray] (-0.3,0) arc (180:540:0.3);
\draw[thick, fill, green] (0.67,0) circle (2pt);
\draw[thick, fill] (0,0) circle (2pt) -- (0, -1) circle (2pt);
\end{tikzpicture}
+
\begin{tikzpicture}[baseline=-.5ex]
\draw[thick, fill] (0,0) node (A) {} -- (1,0) circle (2pt);
\grape[120]{A};
\draw[line width=3, red] (0,0) -- (150:0.5) (0,0) -- (90:0.5) (150:0.5) arc (240:0:{0.5/sqrt(3)});
\draw[thick, <-, gray] (-0.3,0) arc (180:540:0.3);
\draw[thick, fill, green] (-.44,.74) circle (2pt);
\draw[thick, fill] (0,0) circle (2pt) -- (0, -1) circle (2pt);
\end{tikzpicture}
-
\begin{tikzpicture}[baseline=-.5ex]
\draw[thick, fill] (0,0) node (A) {} -- (1,0) circle (2pt);
\grape[120]{A};
\draw[line width=3, red] (0,0) -- (90:0.5) (0,0) -- (-90:0.5) (0,0) -- (0.5,0);
\draw[thick, <-, gray] (-0.3,0) arc (180:540:0.3);
\draw[thick, fill] (0,0) circle (2pt) -- (0, -1) circle (2pt);
\end{tikzpicture}
-
\begin{tikzpicture}[baseline=-.5ex]
\draw[thick, fill] (0,0) node (A) {} -- (1,0) circle (2pt);
\grape[120]{A};
\draw[line width=3, red] (0,0) -- (90:0.5) (0,0) -- (150:0.5) (0,0) -- (-90:0.5);
\draw[thick, <-, gray] (-0.3,0) arc (180:540:0.3);
\draw[thick, fill] (0,0) circle (2pt) -- (0, -1) circle (2pt);
\end{tikzpicture}
\]
Using \eqref{eq:Q} again, we can express the last term in terms of
loop classes. Thus, \( \alpha \) can be expressed in terms of loop
classes and the third term above, say \( \alpha' \).

Note that \( \alpha' \) has three half edges
\( \sfh_1, \sfh_r,\sfh_{s-1} \), where \( \sfh_{s-1} \) is the first
half edge of \( \sfe_t \). By the assumption on \( \alpha \), there
are no stabilizations on the edges lying between \( \sfh_r \) and
\( \sfh_{s-1} \). Therefore, we can always reduce the number of second
half edges in \( \alpha \), while maintaining the assumptions that it
contains the first half edge \( \sfh_1 \), and that there are no
stabilizations on the edges lying between the other two half edges.
Hence, \( \alpha \) becomes a standard star class, which completes the
proof.
\end{proof}

\begin{remark}
  The arguments in the proof of \Cref{lem:standard star} have been
  known in the community, although they are not explicitly documented
  in the literature. In this paper, we take the opportunity to write
  down some details about them. See \cite{KP2012} and \cite{ADCK2019}
  for related topics.
\end{remark}

For a polynomial \( p\in \bbF[E] \) and an edge \( \sfe\in E \), we
denote by \( \deg_\sfe p \) the degree of \( \sfe \) in \( p \) as a
polynomial in the single variable \( \sfe \).

\begin{lem}\label{lem:one-to-one}
There are one-to-one correspondences
\[
S^1_{\ell,m}(k)\cong\HE^1_{\ell,m}(k)
\quad
\text{and}
\quad
L^1_{\ell,m}(k)\cong\HE^2_{\ell,m}(k).
\]
\end{lem}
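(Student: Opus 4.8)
The plan is to define explicit maps from the combinatorial sets to the homology classes and then to pin down bijectivity by a dimension count, rather than by checking that distinct combinatorial data give distinct homology classes (this distinctness is exactly a linear independence statement that we cannot verify directly). Concretely, for an HE-configuration \((\sfh,\vec a)\in \HE^1_{\ell,m}(k)\) of type \(1\) with \(\sfh=\sfh^1(\sfe_r)\), I set \(s=\min\{s': r<s'\le \ell+m,\ a_{s'}\ge 1\}\) (which exists by the type-\(1\) condition), take the embedding \(\iota\colon\Gamma_{0,3}\to\Gamma_{\ell,m}\) with image \(\sfh^1(\sfe_1)\cup\sfh^1(\sfe_r)\cup\sfh^1(\sfe_s)\) oriented counterclockwise, and the monomial \(p=\sfe_1^{a_1}\sfe_r^{a_r-1}\sfe_s^{a_s-1}\prod_{i\ne 1,r,s}\sfe_i^{a_i}\) of degree \(k-2\); I declare \(\Phi_1(\sfh,\vec a)=p\cdot\iota_*(\alpha_{123})\). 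For \((\sfh,\vec a)\in \HE^2_{\ell,m}(k)\) of type \(2\) with \(\sfh=\sfh^2(\sfe_r)\) (so \(\sfe_r\) is a loop, since only loops have a second half edge attached to \(\sfv\)), I take the embedding \(j\colon\Gamma_{1,0}\to\Gamma_{\ell,m}\) onto \(\sfe_r\) and \(q=\sfe_r^{a_r-1}\prod_{i\ne r}\sfe_i^{a_i}\) of degree \(k-1\), and set \(\Phi_2(\sfh,\vec a)=q\cdot j_*(\beta_1)\).

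First I would check that \(\Phi_1\) and \(\Phi_2\) land in \(S^1_{\ell,m}(k)\) and \(L^1_{\ell,m}(k)\). For \(\Phi_1\) this amounts to the two defining conditions of a standard star class: the arms are \(\sfe_1,\sfe_r,\sfe_s\) with \(1<r<s\), and \(p\) contains no \(\sfe_j\) with \(r<j<s\); the latter holds because \(a_j=0\) for all such \(j\) by the minimality of \(s\). For \(\Phi_2\) nothing beyond \(\sfe_r\) being a loop must be checked. Next I would verify surjectivity: every standard star class is \(p\cdot\iota_*(\alpha_{123})\) with arms \(\sfe_1,\sfe_r,\sfe_s\) and \(p\) avoiding \(\sfe_j\) for \(r<j<s\), and reading off \(a_r=\deg_{\sfe_r}p+1\), \(a_s=\deg_{\sfe_s}p+1\), and \(a_i=\deg_{\sfe_i}p\) otherwise produces a preimage in \(\HE^1_{\ell,m}(k)\) (the minimality recovering \(s\) uses precisely the avoidance condition); likewise every loop class is hit by \(\Phi_2\).

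The bijectivity then follows from a sandwich of cardinalities. By \Cref{prop:elementary} (valid for all \(k\ge0\) since \(\ell+m\ge2\)), \eqref{eq:P1_comb}, and the decomposition \(\HE_{\ell,m}(k)=\HE^1_{\ell,m}(k)\sqcup \HE^2_{\ell,m}(k)\) of \Cref{def:HE},
\[
\dim_\bbF H_1(B_k\Gamma_{\ell,m})=P^1_{\Gamma_{\ell,m}}(k)=|\HE_{\ell,m}(k)|=|\HE^1_{\ell,m}(k)|+|\HE^2_{\ell,m}(k)|.
\]
On the other hand, \Cref{lem:standard star} shows that \(H_1(B\Gamma_{\ell,m})\) is generated over \(\bbF[E]\) by standard star and loop classes; restricting to braid index \(k\) and using that the relations \((X_1),(X_2),(Q)\) preserve the braid index, one sees that \(S^1_{\ell,m}(k)\cup L^1_{\ell,m}(k)\) spans \(H_1(B_k\Gamma_{\ell,m})\), whence \(\dim_\bbF H_1(B_k\Gamma_{\ell,m})\le |S^1_{\ell,m}(k)|+|L^1_{\ell,m}(k)|\). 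Combining this with the surjections \(\Phi_1,\Phi_2\) gives
\[
\dim_\bbF H_1(B_k\Gamma_{\ell,m})\le |S^1_{\ell,m}(k)|+|L^1_{\ell,m}(k)|\le |\HE^1_{\ell,m}(k)|+|\HE^2_{\ell,m}(k)|=\dim_\bbF H_1(B_k\Gamma_{\ell,m}),
\]
so all inequalities are equalities; since \(|S^1_{\ell,m}(k)|\le|\HE^1_{\ell,m}(k)|\) and \(|L^1_{\ell,m}(k)|\le|\HE^2_{\ell,m}(k)|\) with equal sums, both are equalities, and a surjection between finite sets of equal cardinality is a bijection.

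The step I expect to be the main obstacle is the span claim: that \(S^1_{\ell,m}(k)\cup L^1_{\ell,m}(k)\), namely standard classes of braid index exactly \(k\) rather than arbitrary stabilizations, already spans \(H_1(B_k\Gamma_{\ell,m})\). The difficulty is that stabilizing a standard star class by a monomial introducing some \(\sfe_j\) with \(r<j<s\) destroys standardness, so one must re-expand such a class using \Cref{lem:standard star} while tracking that the output remains within braid index \(k\). Once this is handled, the remainder is bookkeeping and the dimension count does the rest.
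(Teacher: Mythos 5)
Your proposal is correct, but it establishes the bijection by a genuinely different mechanism than the paper. The paper's proof is a direct two-sided dictionary: it reads off an HE-configuration from the presenting data \((\iota,p)\) of a standard star class (resp.\ \((j,q)\) of a loop class) and writes down the inverse assignment, so the correspondence is purely combinatorial and no dimension count appears; the cost is that it implicitly treats a standard star class as determined by its data, with distinctness of the resulting homology classes certified only afterwards in \Cref{prop:star and loop}. You instead construct only the map from configurations to classes, verify surjectivity, and force bijectivity by the sandwich \(\dim_\bbF H_1(B_k\Gamma_{\ell,m})\le|S^1_{\ell,m}(k)|+|L^1_{\ell,m}(k)|\le|\HE^1_{\ell,m}(k)|+|\HE^2_{\ell,m}(k)|=\dim_\bbF H_1(B_k\Gamma_{\ell,m})\) --- in effect proving \Cref{lem:one-to-one} and \Cref{prop:star and loop} simultaneously. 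This is not circular: the inputs are \Cref{prop:elementary}, \Cref{prop:HE and H1}, and \Cref{lem:standard star}, none of which rely on \Cref{lem:one-to-one}. The spanning step you flag as the main obstacle is resolved exactly as the paper resolves it in \Cref{prop:star and loop}: the reductions in the proof of \Cref{lem:standard star} use only braid-index-homogeneous relations, so every star or loop class of braid index \(k\) is already a combination of standard classes of braid index \(k\); your worry is legitimate but is a burden shared with, not added to, the paper's own argument. One small point in your favor: your formulas \(\vec a=(\deg_{\sfe_t}(p\sfe_r\sfe_s))_t\) and \(\vec a=(\deg_{\sfe_t}(q\sfe_r))_t\) are the ones consistent with \(\norm{\vec a}=k\) and with \Cref{figure:HE and classes}; the expression \(\deg_{\sfe_t}(p\sfe_s)\) written in the paper's proof is missing the factor \(\sfe_r\) and would neither give the right total size nor guarantee \(a_r\ge1\).
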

\begin{proof}
  Let \(\alpha\in S^1_{\ell,m}(k)\) be a standard star class
  characterized by three edges \(\sfe_1, \sfe_r, \sfe_s\) and a monic
  monomial \(p\in\mathbb{Z}[E]\). Then we can assign an
  HE-configuration \((\sfh^{1}(\sfe_r),\vec a)\) of type \(1\), where
  \(\vec a = (\deg_{\sfe_t} (p\sfe_s))_{t=1}^{\ell+m}\). For a loop
  class \(\beta=q j_*(\beta_1)\in L^1_{\ell,m}(k)\), we assign an
  HE-configuration \((\sfh^2(\sfe_r),\vec a)\) of type \(2\), where
  \(\sfe_r\) is the edge that the class \(\beta\) involves and
  \(\vec a= (\deg_{\sfe_t} q\sfe_r)_{t=1}^{\ell+m}\).

  Conversely, for an HE-configuration \((\sfh^1(\sfe_r),\vec a)\) of
  type \(1\), let \(s\) be the smallest index with \(a_s\ge 1\) and
  \(r<s\). Then we have a homology class \(\alpha\) determined by the
  three half edges \(\sfh^1(\sfe_1),\sfh^1(\sfe_r),\sfh^1(\sfe_s)\)
  and the monic monomial \(p\in\mathbb{Z}[E]\) such that
  \(p \sfe_s = \prod_{t=1}^{\ell+m}\sfe_t^{a_t}\). For an
  HE-configuration \((\sfh^2(\sfe_r),\vec a)\) of type \(2\), the
  homology class \(j_*(\beta_1)\) is the homology class determined by
  the loop \(\sfe_r\) and the stabilization \(q\) is determined by
  \(q\sfe_r=\prod_{t=1}^{\ell+m} \sfe_t^{a_t}\).
\end{proof}

See \Cref{figure:star and HE1,figure:star and HE2} for examples of the correspondences
in \Cref{lem:one-to-one}.

\begin{figure}
\subcaptionbox{A star class and the corresponding HE-configurations of type \(1\).\label{figure:star and HE1}}{
\(
\begin{tikzcd}[column sep=4pc, ampersand replacement=\&]
\begin{tikzpicture}[baseline=-.5ex]
\draw[thick, fill, blue] 
(0,0) -- (-36:2) circle(2pt) node[below right] {\(*\)};
\draw[thick, fill] 
(0,0) -- (-12:2) circle(2pt) 
(0,0) -- (12:2) circle(2pt) 
(0,0) -- (36:2) circle (2pt);
\draw[thick] 
(0,0) -- (60:1) arc (-30:210:{1/sqrt(3)}) -- (0,0)
(0,0) -- (150:1) arc (60:300:{1/sqrt(3)}) -- (0,0)
(0,0) -- (240:1) arc (150:390:{1/sqrt(3)}) -- (0,0);
\draw[line width=3, red] 
(0,0) -- (12:1)
(0,0) -- (-12:1)
(0,0) -- (-36:1)
;
\draw[thick, fill] (0,0) circle (2pt);
\draw[thick, green, fill] 
(-36:1.33) circle (2pt) (-36:1.67) circle (2pt)
(-90:{3/sqrt(3)}) circle (2pt)
(12:1.5) circle (2pt) (12:1.75) circle (2pt)
(90:{2/sqrt(3)}) + (60:{1/sqrt(3)}) circle (2pt) + (120:{1/sqrt(3)}) circle (2pt)
(180:{3/sqrt(3)}) circle (2pt)
;
\draw[thick, ->, gray] (-0.3,0) arc (180:540:0.3);
\end{tikzpicture}
\ar[r,leftrightarrow]\&
\begin{tikzpicture}[baseline=-.5ex]
\draw[thick, fill, blue] 
(0,0) -- (-36:2) circle(2pt) node[below right] {\(*\)};
\draw[thick, fill] 
(0,0) -- (-12:2) circle(2pt) 
(0,0) -- (12:2) circle(2pt) 
(0,0) -- (36:2) circle (2pt);
\draw[thick] 
(0,0) -- (60:1) arc (-30:210:{1/sqrt(3)}) -- (0,0)
(0,0) -- (150:1) arc (60:300:{1/sqrt(3)}) -- (0,0)
(0,0) -- (240:1) arc (150:390:{1/sqrt(3)}) -- (0,0);
\draw[line width=3, red] 
(0,0) -- (-12:1)
;
\draw[thick, fill] (0,0) circle (2pt);
\draw[thick, red, fill] 
(-36:1.33) circle (2pt) (-36:1.67) circle (2pt)
(-90:{3/sqrt(3)}) circle (2pt)
(12:1.25) circle (2pt) 
(12:1.5) circle (2pt) (12:1.75) circle (2pt)
(90:{2/sqrt(3)}) + (60:{1/sqrt(3)}) circle (2pt) + (120:{1/sqrt(3)}) circle (2pt)
(180:{3/sqrt(3)}) circle (2pt)
;
\draw[thick, ->, gray] (-0.3,0) arc (180:540:0.3);
\end{tikzpicture}\\
\alpha = (\sfe_1^2\sfe_3^2\sfe_5^2\sfe_6\sfe_7)\iota_*(\alpha_{123})
\& (\sfh^1(\sfe_2), (2,1,3,0,2,1,1))
\end{tikzcd}
\)
}

\subcaptionbox{A loop class and the corresponding HE-configuration of type \(2\).\label{figure:star and HE2}}{
\begin{tikzcd}[column sep=4pc, ampersand replacement=\&]
\begin{tikzpicture}[baseline=-.5ex]
\draw[thick, fill, blue] 
(0,0) -- (-36:2) circle(2pt) node[below right] {\(*\)};
\draw[thick, fill] 
(0,0) -- (-12:2) circle(2pt) 
(0,0) -- (12:2) circle(2pt) 
(0,0) -- (36:2) circle (2pt);
\draw[thick] 
(0,0) -- (60:1) arc (-30:210:{1/sqrt(3)}) -- (0,0)
(0,0) -- (150:1) arc (60:300:{1/sqrt(3)}) -- (0,0)
(0,0) -- (240:1) arc (150:390:{1/sqrt(3)}) -- (0,0);
\draw[line width=3, red] 
(0,0) -- (300:1) arc (390:150:{1/sqrt(3)}) -- cycle;
\draw[thick, fill] (0,0) circle (2pt);
\draw[thick, green, fill] 
(-36:1.5) circle (2pt)
(12:1.33) circle (2pt) (12:1.67) circle (2pt)
(36:1.33) circle (2pt) (36:1.67) circle (2pt)
(180:{2/sqrt(3)}) + (90:{1/sqrt(3)}) circle (2pt) + (270:{1/sqrt(3)}) circle (2pt) + (180:{1/sqrt(3)}) circle (2pt)
(270:{3/sqrt(3)}) circle (2pt)
;
\draw[thick, ->, gray] (-0.3,0) arc (180:540:0.3);
\end{tikzpicture}
\ar[r,leftrightarrow]
\&
\begin{tikzpicture}[baseline=-.5ex]
\draw[thick, fill, blue] 
(0,0) -- (-36:2) circle(2pt) node[below right] {\(*\)};
\draw[thick, fill] 
(0,0) -- (-12:2) circle(2pt) 
(0,0) -- (12:2) circle(2pt) 
(0,0) -- (36:2) circle (2pt);
\draw[thick] 
(0,0) -- (60:1) arc (-30:210:{1/sqrt(3)}) -- (0,0)
(0,0) -- (150:1) arc (60:300:{1/sqrt(3)}) -- (0,0)
(0,0) -- (240:1) arc (150:390:{1/sqrt(3)}) -- (0,0);
\draw[line width=3, red] 
(0,0) -- (300:1);
\draw[thick, fill] (0,0) circle (2pt);
\draw[thick, red, fill] 
(-36:1.5) circle (2pt)
(12:1.33) circle (2pt) (12:1.67) circle (2pt)
(36:1.33) circle (2pt) (36:1.67) circle (2pt)
(180:{2/sqrt(3)}) + (90:{1/sqrt(3)}) circle (2pt) + (270:{1/sqrt(3)}) circle (2pt) + (180:{1/sqrt(3)}) circle (2pt)
(270:{3/sqrt(3)}) circle (2pt)
;
\draw[thick, ->, gray] (-0.3,0) arc (180:540:0.3);
\end{tikzpicture}\\
\beta=(\sfe_1\sfe_3^2\sfe_4^2\sfe_6^3\sfe_7)j_*(\beta_1)
\&
(\sfh^2(\sfe_7),(1,0,2,2,0,3,2))
\end{tikzcd}
}
\caption{Examples of the correspondence between HE-configurations and homology classes.}
\label{figure:HE and classes}
\end{figure}

\begin{prop}\label{prop:star and loop}
Let \(\ell\ge0\) and \(m\ge 1\). For each \(k\ge 0\), the union \(S^1_{\ell,m}(k)\sqcup L^1_{\ell,m}(k)\) forms an \(\bbF\)-basis for \(H_1(B_k\Gamma_{\ell,m})\).
\end{prop}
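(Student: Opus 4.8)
The plan is to exhibit $S^1_{\ell,m}(k)\sqcup L^1_{\ell,m}(k)$ as a spanning set of $H_1(B_k\Gamma_{\ell,m})$ whose cardinality equals $\dim_\bbF H_1(B_k\Gamma_{\ell,m})$; a spanning family of that size is automatically a basis. Before the main argument I would dispose of the degenerate cases $2\ell+m\le2$, i.e.\ $(\ell,m)\in\{(0,1),(0,2)\}$: here $\Gamma_{\ell,m}$ has no essential vertex, so $H_1(B_k\Gamma_{\ell,m})=0$ and there are no star or loop classes, and the claim is vacuous. Thus I would assume $2\ell+m\ge3$, which forces $\ell+m\ge2$ and brings \Cref{pro:1} into play.

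For the spanning step, I would start from \Cref{thm:SL-gen}, which says the star and loop classes generate $H_1(B\Gamma_{\ell,m})$ over $\bbF[E]$. Since this module is graded by the braid index and stabilizing a star (respectively loop) class by a monomial again produces a star (respectively loop) class, $H_1(B_k\Gamma_{\ell,m})$ is spanned over $\bbF$ by the star and loop classes of braid index $k$. I would then invoke the reduction in the proof of \Cref{lem:standard star}: every such star class is rewritten, via the relations $(X_1)$, $(X_2)$, and $(Q)$, as an $\bbF$-linear combination of standard star classes and loop classes. Because each of these relations, and each stabilization used, is homogeneous in the braid index, the entire reduction stays inside $H_1(B_k\Gamma_{\ell,m})$. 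Together with the observation that every loop class of braid index $k$ already lies in $L^1_{\ell,m}(k)$, this shows that $S^1_{\ell,m}(k)\sqcup L^1_{\ell,m}(k)$ spans $H_1(B_k\Gamma_{\ell,m})$.

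For the count, I would chain the earlier results. By \Cref{lem:one-to-one} there are bijections $S^1_{\ell,m}(k)\cong\HE^1_{\ell,m}(k)$ and $L^1_{\ell,m}(k)\cong\HE^2_{\ell,m}(k)$, so by the decomposition $\HE_{\ell,m}(k)=\HE^1_{\ell,m}(k)\sqcup\HE^2_{\ell,m}(k)$ of \Cref{def:HE} their total size is $|\HE_{\ell,m}(k)|$, which equals $P^1_{\Gamma_{\ell,m}}(k)$ by \eqref{eq:P1_comb}. Since $\ell+m\ge2$, \Cref{prop:elementary} gives $\cR_{\Gamma_{\ell,m}}\equiv0$, and hence $P^1_{\Gamma_{\ell,m}}(k)=\dim_\bbF H_1(B_k\Gamma_{\ell,m})$ for every $k\ge0$, not merely asymptotically. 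Therefore $|S^1_{\ell,m}(k)|+|L^1_{\ell,m}(k)|=\dim_\bbF H_1(B_k\Gamma_{\ell,m})$.

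Finally I would conclude by pure linear algebra: a list of $\dim_\bbF H_1(B_k\Gamma_{\ell,m})$ classes that spans the space must be linearly independent, and in particular its members are pairwise distinct, so $S^1_{\ell,m}(k)$ and $L^1_{\ell,m}(k)$ are automatically disjoint and their union is a basis. The genuine content here is the spanning step; the dimension is handed to us by the preceding propositions, so the only thing that needs care is verifying that the standard-form reduction of \Cref{lem:standard star} never leaves the fixed braid index $k$, i.e.\ that each relation and stabilization invoked is braid-index preserving. I expect this bookkeeping, rather than any new idea, to be the main obstacle.
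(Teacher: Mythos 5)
Your proposal is correct and follows essentially the same route as the paper: spanning via \Cref{thm:SL-gen} and the reduction of \Cref{lem:standard star}, the cardinality count via \Cref{lem:one-to-one} and the identification of \( |\HE_{\ell,m}(k)| \) with \( N_{\ell,m}(k)=\dim_\bbF H_1(B_k\Gamma_{\ell,m}) \), and then the linear-algebra conclusion that a spanning set of the right size is a basis. The only cosmetic differences are that you treat the degenerate cases \( 2\ell+m\le 2 \) separately (the paper absorbs them into \Cref{lem:standard star}) and that you flag the braid-index homogeneity of the relations, which the paper leaves implicit.
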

\begin{proof}
  Recall the number \( N_{\ell,m}(k) \) in \Cref{prop:elementary}. Let
  \(\bbF\langle S^1_{\ell,m}(k)\sqcup L^1_{\ell,m}(k)\rangle\) be the
  subspace of \(H_1(B_k\Gamma_{\ell,m})\) spanned by
  \(S^1_{\ell,m}(k)\sqcup L^1_{\ell,m}(k)\). Then we have
\begin{align*}
N_{\ell,m}(k)&=\dim_\bbF H_1(B_k\Gamma_{\ell,m})\tag{\Cref{equation:residual for elementary}}\\
&=\dim_\bbF \bbF\langle S^1_{\ell,m}(k)\sqcup L^1_{\ell,m}(k)\rangle\tag{\Cref{lem:standard star}}\\
&\le |S^1_{\ell,m}(k)\sqcup L^1_{\ell,m}(k)|\\
&=|\HE_{\ell,m}(k)|\tag{\Cref{lem:one-to-one}}\\
&=N_{\ell,m}(k). \tag{\Cref{prop:HE and H1}}
\end{align*}
Hence, \(\dim_\bbF \bbF\langle S^1_{\ell,m}(k)\sqcup L^1_{\ell,m}(k)\rangle=
|S^1_{\ell,m}(k)\sqcup L^1_{\ell,m}(k)|\), which implies the linear independence of \(S^1_{\ell,m}(k)\sqcup L^1_{\ell,m}(k)\).
\end{proof}

\subsection{Homology classes for bunches of grapes}
Let \(\Gamma=(\sfT,\loops)\) be a bunch of grapes such that \(\sfT\)
has at least one edge. Fix an HE-configuration
\(\mathscr{X}=(W,\vec j, ((\sfh^\sfv,\vec a^\sfv)), \vec c)\in
\HE^i_\Gamma(k)\). Then we split \(W\) into the two subsets \(W_1\)
and \(W_2\) given by \(\sfv\in W_\epsilon\) if
\((\sfh^\sfv,\vec
a^\sfv)\in\SHE^\epsilon_{\ell(\sfv),m(\sfv)}(j_\sfv)\).

We assign to \( \mathscr{X} \) a configuration
\( \alpha(\mathscr{X}) \) of dots and half edges as
follows: \begin{enumerate}
\item For \(\sfv\in W_1\) and \(\sfh^\sfv=\sfh^1(\sfe_r^\sfv)\),
\begin{enumerate}
\item choose the smallest index \(s\) such that \(r<s\) with \(a^\sfv_s\ge 1\),
\item replace the dot on \(\sfe^\sfv_s\) closest to \(\sfv\) with a marked half edge, and mark the half edge on the pivot edge \(\sfe_1^\sfv\), and
\item regard the remaining dots as stabilizations.
\end{enumerate}
\item For \(\sfv\in W_2\) and \(\sfh^\sfv=\sfh^2(\sfe_r^\sfv)\),
\begin{enumerate}
\item mark the edge \(\sfe_r^\sfv\), and
\item regard all other dots as stabilizations.
\end{enumerate}
\item Assign a counterclockwise orientation at each vertex \(\sfv\in W\).
\end{enumerate}
Let \(p\in\mathbb{Z}[E]\) be the stabilization monomial in the
configuration \( \alpha(\mathscr{X}) \), i.e., the monomial obtained
by multiplying the edge containing each remaining dot. Then one can
easily check that the total degree of \(p\) is \(k-2|W_1|-|W_2|\).
Moreover, for each \(\sfv\in W\), there is a canonical embedding
\[
\iota^\sfv:B_2\Gamma_{0,3}\to B_2\Gamma\quad\text{ or }\quad
j^\sfv:B_1\Gamma_{1,0}\to B_1\Gamma
\]
according to whether \(\sfv\) is contained in \(W_1\) or \(W_2\).
Since these embeddings have pairwise disjoint images, we have the embeddings
\begin{align*}
\mathcal{I}&=\coprod_{\sfv\in W_1} \iota^\sfv : \prod_{\sfv\in W_1} B_2\Gamma_{0,3} \to B_{2|W_1|}\Gamma& &\text{and}&
\mathcal{J}&=\coprod_{\sfv\in W_2} j^\sfv : \prod_{\sfv\in W_2} B_1\Gamma_{1,0} \to B_{|W_2|}\Gamma
\end{align*}
defined by
\begin{align*}
\mathcal{I}((x^\sfv)_{\sfv\in W_1}) &= 
\coprod_{\sfv\in W_1} \iota^{\sfv}(x^\sfv)
& &\text{and}&
\mathcal{J}((y^\sfv)_{\sfv\in W_2}) &= \{y^\sfv: \sfv\in W_2\},
\end{align*}
respectively.

Finally, we define the map
\(\mathcal{K}\coloneqq\mathcal{I}\coprod \mathcal{J}:
\left(\prod_{\sfv\in W_1}B_2\Gamma_{0,3}\right)\times
\left(\prod_{\sfv\in W_2}B_1\Gamma_{1,0}\right) \to
B_{2|W_1|+|W_2|}\Gamma \) as the disjoint union of the maps
\( \mathcal{I} \) and \( \mathcal{J} \). Then the map \(\mathcal{K}\)
induces a group homomorphism between homology groups
\[
\mathcal{K}_*:H_{|W|}\left(
\left(\prod_{\sfv\in W_1}B_2\Gamma_{0,3}\right)\times
\left(\prod_{\sfv\in W_2}B_1\Gamma_{1,0}\right)
\right)
\to H_{|W|}(B_{2|W_1|+|W_2|}\Gamma).
\]
Note that due to the K\"unneth theorem and the fact that
\(H_i(B\Gamma_{\ell,m})=0\) for all \(i\ge2\), the domain of
\(\mathcal{K}_*\) can be expressed as a tensor product:
\[
H_{|W|}\left(
\left(\prod_{\sfv\in W_1}B_2\Gamma_{0,3}\right)\times
\left(\prod_{\sfv\in W_2}B_1\Gamma_{1,0}\right)
\right)
\cong \left(\bigotimes_{\sfv\in W_1} H_1(B_2\Gamma_{0,3}) \right)
\otimes
\left(\bigotimes_{\sfv\in W_2} H_1(B_1\Gamma_{1,0}) \right)
\cong \mathbb{Z}\langle \alpha_0\rangle,
\]
where the generator \(\alpha_0\) is given by the tensor product
\( \left(\bigotimes_{\sfv\in W_1} \alpha_{123}\right)\otimes
\left(\bigotimes_{\sfv\in W_2} \beta_{1}\right) \) of the generators.
Then the given HE-configuration \( mathscr{X} \) provides a homology
class
\(\alpha(\mathscr{X})=p\cdot\mathcal{K}_*(\alpha_0)\in
H_{|W|}(B_k\Gamma)\).

\begin{exam}
  For example, the HE-configuration \(\mathscr{X}\) in
  \Cref{fig:image21} corresponds to the following diagram:
\[
\begin{tikzpicture}[baseline=-.5ex]
\draw[thick,fill] (0,0) circle (2pt) node (A) {} node[above] {\(\mathsf{1}\)} -- ++(5:2) circle(2pt) node(B) {} node[above left] {\(\mathsf{0}\)};
\draw[thick,fill] (B.center) +(0,1.5) circle (2pt) node (C) {} node[left=0.5ex] {\(\mathsf{13}\)};
\draw[thick,fill] (B.center) -- (C.center);
\draw[thick,fill] (B.center) -- +(0,-1.5) circle (2pt) node(D) {} node[left] {\(\mathsf{2}\)} +(0,0) -- ++(-10:1.5) circle(2pt) node(E) {} node[above] {\(\mathsf{3}\)} -- ++(10:1.5) circle(2pt) node(F) {} node[below=0.5ex] {\(\mathsf{7}\)} -- +(120:1) circle(2pt) node(G) {} node[xshift=-1.2ex,yshift=2ex] {\(\mathsf{12}\)} +(0,0) -- +(60:1) circle (2pt) node(H) {} node[xshift=1.2ex,yshift=2ex] {\(\mathsf{11}\)} +(0,0) -- ++(-5:1) circle (2pt) node(I) {} node[below] {\(\mathsf{8}\)} -- +(30:1) circle(2pt) node(J) {} node[right] {\(\mathsf{10}\)} +(0,0) -- +(-30:1) circle (2pt) node(K) {} node[above] {\(\mathsf{9}\)};
\draw[thick, fill] (E.center) -- ++(0,-1) circle (2pt) node[left] {\(\mathsf{4}\)} -- +(-60:1) circle (2pt) node[right] {\(\mathsf{6}\)} +(0,0) -- +(-120:1) circle (2pt) node[left] {\(\mathsf{5}\)};
\grape[180]{A};
\grape[0]{C}; \grape[90]{C}; \grape[180]{C};
\grape[-90]{D};
\grape[-90]{F};
\grape[120]{G};
\grape[60]{H};
\grape[-30]{K};

\draw[line width=3,red] 
(B.center) -- +(90:0.3)
(B.center) -- +(5:-0.5)
(B.center) -- +(-10:0.5)
(C.center) -- +(30:0.5) arc (120:-120:{0.5/sqrt(3)}) -- cycle
(E.center) -- +(0,-0.3)
(E.center) -- +(10:0.5)
(E.center) ++(0,-1) -- +(-120:0.5) +(0,0) -- +(-60:0.5)
(E.center) ++(0,-1) -- +(90:0.3)
(E.center) -- +(-10:-0.5)
(F.center) -- +(60:0.5)
(F.center) -- +(120:0.5)
(F.center) -- +(10:-0.5)
(H.center) -- +(90:0.5) arc (180:-60:{0.5/sqrt(3)}) -- cycle
(I.center) -- +(-30:0.5)
(I.center) -- +(30:0.5)
(I.center) -- +(-5:-0.5)
(K.center) -- +(0:0.5) arc (90:-150:{0.5/sqrt(3)}) -- cycle;

\draw[thick, fill] 
(B.center) circle (2pt)
(C.center) circle (2pt)
(E.center) circle (2pt)
(E.center) ++(0,-1) circle (2pt)
(F.center) circle (2pt)
(H.center) circle (2pt)
(I.center) circle (2pt)
(K.center) circle (2pt);

\draw[thick,fill,green] 
(A.center) +(5:1) circle(2pt) +(5:0.75) circle (2pt) +(5:0.5) circle (2pt)
(C.center) +(90:{1.5/sqrt(3)}) circle (2pt) ++(180:{1/sqrt(3)}) +(150:{0.5/sqrt(3)}) circle (2pt) +(210:{0.5/sqrt(3)}) circle (2pt)
(E.center) +(0,-0.4) circle (2pt) +(0,-0.6) circle (2pt)
(H.center) ++(60:{1/sqrt(3)}) +(0:{0.5/sqrt(3)}) circle (2pt) +(60:{0.5/sqrt(3)}) circle (2pt) +(120:{0.5/sqrt(3)}) circle (2pt)
(I.center) +(30:0.75) circle (2pt)
(K.center) ++(-30:{1/sqrt(3)}) +(-60:{0.5/sqrt(3)}) circle (2pt) +(0:{0.5/sqrt(3)}) circle (2pt);
\draw[thick,fill,green] 
(B.center) +(90:0.5) circle(2pt) +(90:0.7) circle(2pt) +(90:0.9) circle(2pt) +(90:1.1) circle(2pt)
;
\end{tikzpicture}
\]
Here, we have
\begin{align*}
W_1&=\{\mathsf{0}, \mathsf{3},\mathsf{4},\mathsf{7},\mathsf{8}\},&
W_2&=\{\mathsf{9}, \mathsf{11}, \mathsf{13}\},
\end{align*}
and
\begin{align*}
p&=(\sfe^{\mathsf{1}})^3(\sfe^{\mathsf{4}})^2(\sfe^{\mathsf{9}}_2)^2 (\sfe^{\mathsf{10}})^1(\sfe^{\mathsf{11}}_2)^3 (\sfe^{\mathsf{13}})^4(\sfe^{\mathsf{13}}_3)^1 (\sfe^{\mathsf{13}}_4)^2,&
\mathcal{K}_*(\alpha_0)&\in H_{|W|}(B_{2|W_1|+|W_2|}\Gamma)=H_8(B_{13}\Gamma).
\end{align*}
Therefore, \(\alpha(\mathscr{X})=p\cdot\mathcal{K}_*(\alpha_0)\in H_8(B_{31}\Gamma)\).
\end{exam}

In summary, we obtain that the basis of the \(i\)-th homology group
\(H_i(B_k\Gamma)\) can be given by the set of HE-configurations on
\(\Gamma\).

\begin{thm}\label{thm:combinatorial description for homology cycles}
Let \(\Gamma\cong(\sfT,\loops)\) be a bunch of grapes such that \(\sfT\) has at least one edge.
Then for each \(i\ge 0\) and \(k\ge 0\), the map
\[
\alpha:\HE^i_\Gamma(k)\to H_i(B_k\Gamma)
\]
defined by \(\mathscr{X}\mapsto \alpha(\mathscr{X})\) is injective, and its image forms an \(\bbF\)-basis for \(H_i(B_k\Gamma)\).
\end{thm}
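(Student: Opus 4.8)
The plan is to induct on the number $N$ of essential vertices of $\Gamma$, feeding the one-bridge tensor decomposition of \Cref{corollary:1-bridge} into the dimension equality $\dim_\bbF H_i(B_k\Gamma)=|\HE^i_\Gamma(k)|$ that already follows from \eqref{eq:1} together with $\cR_\Gamma\equiv0$ (\Cref{theorem:decomposition formula}). Since the domain $\HE^i_\Gamma(k)$ and the target have matching size, it is enough to exhibit $\alpha(\HE^i_\Gamma(k))$ as the image of an explicit $\bbF$-basis under an isomorphism while checking that the indexing by $\HE^i_\Gamma(k)$ is bijective; this delivers basis-ness and injectivity of $\alpha$ in one stroke.

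For the base case $N=1$ we have $\Gamma\cong\Gamma_{\ell,m}$ with $m\ge1$. The cases $i=0$ (the unique configuration of $k$ dots on the root edge maps to a generator of $H_0(B_k\Gamma)\cong\bbF$) and $i\ge2$ (both sides vanish) are immediate. For $i=1$, \Cref{lem:one-to-one} gives a bijection $\HE^1_{\ell,m}(k)\cong S^1_{\ell,m}(k)\sqcup L^1_{\ell,m}(k)$, and the right-hand side is a basis by \Cref{prop:star and loop}; I would check that for a single-vertex $W$ the map $\alpha$ of this section collapses to exactly that bijection, since $\mathcal K$ is then a single embedding $\iota^\sfv$ or $j^\sfv$, the class $\alpha_0$ is $\alpha_{123}$ or $\beta_1$, and the stabilization monomial $p$ matches the star/loop data of \Cref{lem:one-to-one}.

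For the inductive step, after smoothing degree-$2$ vertices I choose a non-loop stem edge $\sfe$ joining two essential vertices and let $\Gamma_1$ (keeping the root $\sfv_0$) and $\Gamma_2$ be the one-bridge components; both are bunches of grapes with fewer essential vertices and stems containing an edge, so the inductive hypothesis applies. The decisive move is to orient $\Gamma_2$ so that $\sfe$ is its root edge. Then \Cref{corollary:1-bridge} gives $H_i(B_\star\Gamma)\cong\bigoplus_{i_1+i_2=i}H_{i_1}(B_\star\Gamma_1)\otimes_{\bbF[\sfe]}H_{i_2}(B_\star\Gamma_2)$ with $H_\bullet(B_\star\Gamma_2)$ free over $\bbF[\sfe]$, and because adding root-edge dots on $\Gamma_2$ is literally multiplication by powers of $\sfe$, we have $\alpha(\mathscr X_2)=\sfe^{\,n}\alpha(\mathscr X_2^{\circ})$, where $\mathscr X_2^{\circ}$ is the root-edge-dot-free configuration underlying $\mathscr X_2$. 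Thus the inductive $\bbF$-basis of $H_{i_2}(B_\star\Gamma_2)$ regroups into the $\bbF[\sfe]$-basis $\{\alpha(\mathscr X_2^{\circ})\}$, and freeness over $\bbF[\sfe]$ makes $\{\alpha(\mathscr X_1)\otimes\alpha(\mathscr X_2^{\circ})\}$ an $\bbF$-basis of the tensor product, with $\mathscr X_1$ running over the inductive $\bbF$-basis of $H_{i_1}(B_\star\Gamma_1)$.

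It then remains to match this basis with $\alpha(\HE^i_\Gamma(k))$. Cutting a configuration $\mathscr X$ at $\sfe$ according to the side of $\sfe$ on which each vertex of $W$ sits produces a bijection between $\HE^i_\Gamma(k)$ and $\coprod_{i_1+i_2=i}\coprod_{k_1+k_2=k}\HE^{i_1}_{\Gamma_1}(k_1)\times\{\text{root-edge-dot-free elements of }\HE^{i_2}_{\Gamma_2}(k_2)\}$; at the level of counts this is the identity $\sum_{k_1+k_2=k}\binom{k_1}{j_1}\binom{k_2-1}{j_2-1}=\binom{k}{j_1+j_2}$, a shifted form of \eqref{eq:10} that is the combinatorial shadow of the $\ast$-product in \Cref{proposition:1-bridge}. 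Because the defining data $\mathcal K=\mathcal I\sqcup\mathcal J$ and $\alpha_0$ of $\alpha(\mathscr X)$ are multiplicative over the vertices of $W$, the isomorphism of \Cref{corollary:1-bridge} (induced by $\Gamma_1\sqcup\Gamma_2\hookrightarrow\Gamma$) should send $\alpha(\mathscr X)$ to $\alpha(\mathscr X_1)\otimes\alpha(\mathscr X_2^{\circ})$. I expect this last compatibility to be the main obstacle: one must track the stabilization monomial $p=p_1p_2$ and, above all, account for the single shared edge $\sfe$ exactly once — as the root edge of $\Gamma_2$ — so that the combinatorial cut and the homological tensor product agree on the nose.
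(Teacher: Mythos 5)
Your proof is correct and takes essentially the route the paper intends: the paper only records the construction of \(\alpha(\mathscr{X})\) and leaves the verification implicit, relying on exactly the ingredients you assemble — the elementary-case basis from \Cref{lem:one-to-one} and \Cref{prop:star and loop}, the \(\bbF[\sfe]\)-freeness and tensor decomposition of \Cref{corollary:1-bridge}, and the dimension count \(\dim_\bbF H_i(B_k\Gamma)=P^i_\Gamma(k)=|\HE^i_\Gamma(k)|\) from \Cref{theorem:decomposition formula} and \eqref{eq:1}. The compatibility you flag as the main obstacle does go through: the root edge of \(\Gamma_2\) is the pivot edge of its root vertex and so carries no SHE-data, making root-edge dots literally multiplication by \(\sfe\), while \(\mathcal{K}=\mathcal{I}\sqcup\mathcal{J}\) and the stabilization monomial \(p\) are multiplicative over the cut, so the inclusion-induced map of \Cref{corollary:1-bridge} sends \(\alpha(\mathscr{X}_1)\otimes\alpha(\mathscr{X}_2^{\circ})\) to \(\alpha(\mathscr{X})\) as you expect.
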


\appendix

\section{Bouquet of circles}\label{appendix}

In this paper, we have considered elementary bunches of graphs
\( \Gamma_{\ell,m} \) for \( m\ge1 \). In this appendix, we consider
the case where \( m=0 \), for completeness.

For \(\ell\ge 1\), let \(\Gamma_{\ell,0}\) be the elementary bunch of
grapes that is homeomorphic to the bouquet of \(\ell\) circles. We
define HE-configurations as in \Cref{def:HE} with \( m=0 \). We also
define SHE-configurations of type \( 1 \) as in \Cref{defn:SHE} with
\( m=0 \). However, we define an \emph{SHE-configuration} of size
\(j\) of \emph{type \( 2 \)} to be a pair
\((\sfh=\sfh^2(\sfe_r),\vec b = (b_1,\dots,b_\ell))\) for some
\(1\le r\le \ell\) such that
\begin{enumerate}
\item \(b_r\in\{0,1\}\) for all \(1\le r\le \ell\) and \(\norm{\vec b}=j\),
\item \(b_r=1\), \(b_{r-1}=0\) if \(\ell\ge 2\),
\item \(b_1=0\) if \(\ell=1\).
\end{enumerate}
Here, the indices of entries in \(\vec b\) are considered as elements in \(\bbZ/\ell\bbZ\). As before, we define \(\SHE_{\ell,0}(j)=\SHE^1_{\ell,0}\sqcup\SHE^2_{\ell,0}(j)\).

Note that if \(\ell=1\), then \(\SHE^2_{1,0}(j)\neq\varnothing\) if
and only if \(j=0\), in which case, we have
\(\SHE^2_{\ell,0}(0)=\{(\sfh^2(\sfe_1),(0))\}\). For \(\ell\ge 2\),
the definition of \( \SHE^2_{\ell,0}(j) \) is the same as the one in
\Cref{defn:SHE}, except that we replace the condition \(b_1=0\) by
\(b_{r-1}=0\).

\begin{lem}
For \(j\ge 0\), \(k\ge 0\), and \(\ell\ge 1\), we have
\begin{align*}
|\SHE^1_{\ell,0}(j)|&=(\ell-2)\binom{\ell-2}{j-1} - \binom{\ell-2}{j},&
|\HE^1_{\ell,0}(k)|&=\sum_{j\ge 0}|\SHE^1_{\ell,0}(j)|\binom {k}{j},\\
|\SHE^2_{\ell,0}(j)|&=\begin{cases}
1 & \ell=1, j=0;\\
0 & \ell=1, j\ge 1;\\
\ell\binom{\ell-2}{j-1} & \ell\ge 2,
\end{cases}&
|\HE^2_{\ell,0}(k)|&=\sum_{j\ge 0}|\SHE^2_{\ell,0}(j)|\binom {k}{j}.
\end{align*}
Therefore,
\[
|\SHE_{\ell,0}(k)|\binom k j=|\HE_{\ell,0}(k)|.
\]
\end{lem}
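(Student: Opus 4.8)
The plan is to prove every clause by splitting along the type decomposition $\SHE_{\ell,0}=\SHE^1_{\ell,0}\sqcup\SHE^2_{\ell,0}$ and $\HE_{\ell,0}=\HE^1_{\ell,0}\sqcup\HE^2_{\ell,0}$, treating the two types independently and then adding. The type~1 statements cost nothing new: with $m=0$ the definitions of type~1 SHE- and HE-configurations are precisely the $m=0$ specializations of \Cref{defn:SHE,def:HE}, and the proofs of \eqref{eq:SHE1} in \Cref{prop:SHE} and \eqref{eq:4} in \Cref{pro:HE=sum SHE} only manipulate $0$-$1$ weight vectors subject to $b_1=0$, never using that any edge is a non-loop. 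So I would simply record that those proofs apply verbatim with $\ell+m$ replaced by $\ell$, yielding $|\SHE^1_{\ell,0}(j)|=(\ell-2)\binom{\ell-2}{j-1}-\binom{\ell-2}{j}$ for $j\ge1$ (with $\SHE^1_{\ell,0}(0)=\varnothing$, so the $j=0$ term drops out) and $|\HE^1_{\ell,0}(k)|=\sum_{j}|\SHE^1_{\ell,0}(j)|\binom{k}{j}$. For $\ell=1$ the formula evaluates to $0$ for every $j$, matching the fact that type~1 requires an index $r$ with $1<r<\ell$, which is vacuous.

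For the type~2 count I would argue directly from the modified definition. When $\ell\ge2$, a configuration $(\sfh^2(\sfe_r),\vec b)\in\SHE^2_{\ell,0}(j)$ is built by choosing the marked loop $\sfe_r$ in $\ell$ ways; the constraints $b_r=1$ and $b_{r-1}=0$ (indices in $\bbZ/\ell\bbZ$) are then forced, and the remaining $j-1$ ones are placed freely among the other $\ell-2$ coordinates, giving $\binom{\ell-2}{j-1}$ choices and hence $|\SHE^2_{\ell,0}(j)|=\ell\binom{\ell-2}{j-1}$. This matches \eqref{eq:SHE2} with $m=0$, even though the \emph{content} differs (the fixed reference zero is now the cyclically preceding loop $r-1$ rather than a distinguished pivot). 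The case $\ell=1$ is settled by the definition itself, which declares $\SHE^2_{1,0}(0)$ a singleton and $\SHE^2_{1,0}(j)=\varnothing$ for $j\ge1$.

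The substantive step is $|\HE^2_{\ell,0}(k)|=\sum_{j}|\SHE^2_{\ell,0}(j)|\binom{k}{j}$, which I would prove by adapting the standardization/destandardization bijection of \Cref{pro:HE=sum SHE} to the cyclic setting. Given $(\sfh^2(\sfe_r),\vec a)$ I standardize by setting $b_i=[a_i\ge1]$ for $i\ne r-1$ and $b_{r-1}=0$, letting the coordinate $r-1$ take over the bookkeeping role played by coordinate $1$ in the $m\ge1$ case and absorbing its count into a composition $\vec c$; conversely each SHE-configuration of size $j$ lifts to $\binom{k}{j}$ HE-configurations by distributing the $k$ dots over the $j$ occupied coordinates together with the reference slot $r-1$, the count being the number of compositions of $k+1$ into $j+1$ positive parts. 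For $\ell\ge2$ the resulting identity then collapses to \eqref{eq:HE2} with $m=0$ by the same Vandermonde convolution used in \Cref{prop:HE and H1}. Summing the type~1 and type~2 identities, together with the two disjoint-union splittings, yields the final identity $\sum_{j\ge0}|\SHE_{\ell,0}(j)|\binom{k}{j}=|\HE_{\ell,0}(k)|$.

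The main obstacle I anticipate is the degenerate loop $\ell=1$ together with the $j=0$ contribution. Here the reference coordinate $r-1$ collapses onto the marked coordinate $r$ in $\bbZ/\ell\bbZ$, so the cyclic bijection above must be inspected by hand: the size-$0$ configuration $(\sfh^2(\sfe_1),(0))$ lifts by placing all $k$ dots on the single loop, which forces $|\HE_{1,0}(k)|=1$ for \emph{every} $k\ge0$. At $k=0$ this ``loop class'' has no dot, and so is not a legitimate type~2 HE-configuration in the strict sense of \Cref{def:HE}; the way I would reconcile this is to note that the count is designed to reproduce the Hilbert polynomial $P^1_{\Gamma_{1,0}}(k)\equiv1$ rather than the Betti number at $k=0$, the gap being exactly the residual term $\cR_{\Gamma_{1,0}}=-1$ of \eqref{equation:residual for elementary}. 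I would therefore make explicit that the appendix's conventions on $\HE_{\ell,0}(0)$ are read so as to include this size-$0$ loop class at $k=0$, which keeps the clean identity valid for all $\ell\ge1$.
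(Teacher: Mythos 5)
Your proposal is correct and follows essentially the same route as the paper: type~1 is handled by citing the $m\ge1$ arguments verbatim, the type~2 SHE count is the same $\ell\cdot\binom{\ell-2}{j-1}$ argument, and the $|\HE^2_{\ell,0}(k)|$ identity uses the same standardization bijection with the cyclically preceding coordinate $r-1$ playing the role of the pivot slot. Your extra care with the $\ell=1$, $k=0$ degenerate case (where the paper simply declares $\HE^2_{1,0}(k)=\{(\sfh^2(\sfe_1),(k))\}$) is a reasonable clarification of the appendix's conventions rather than a different method.
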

\begin{proof}
  The first two equalities follow from the proof of \Cref{prop:SHE},
  and the last equality follows from the other equalities. Hence, it
  remains to prove the third and fourth equalities.

  If \(\ell=1\), then \(\HE^2_{1,0}(k)=\{(\sfh^2(\sfe_1),(k))\}\) and
  so \(|\HE^2_{1,0}(k)|=1=|\SHE^2_{1,0}(0)|\binom k 0\) as desired,
  since \(\SHE^2_{1,0}(j)=\varnothing\) for any \(j\ge 1\). Suppose
  that \(\ell\ge 2\) and consider
  \((\sfh=\sfh^2(\sfe_r),\vec b)\in\SHE^2_{\ell,0}(j)\). Then
  \(b_{r-1}=0\), \(b_r=1\), and \(b_s\) is either \(0\) or \(1\), for
  each \( s\ne r \). Thus, the number of such \(\vec b\) is equal to
  \(\binom {\ell-2}{j-1}\). Since there are \(\ell\) choices for
  \(\sfh\), we have
  \(|\SHE^2_{\ell,0}(j)| = \ell\binom {\ell-2}{j-1}\). This shows the
  third equality.

  Finally, we prove the fourth equality. For each HE-configuration
  \((\sfh^2(\sfe_r), \vec a)\in \HE^2_{\ell,0}(k)\), we define the
  SHE-configuration \((\sfh^2(\sfe_r),\vec b)\) by
\[
b_s=\begin{cases}
0 & s=r-1 \text{ or }b_s=0;\\
1 & s=r;\\
1 & s\neq r-1 \text{ and } b_s\ge 1.
\end{cases}
\]

Conversely, for each SHE-configuration \((\sfh^2(\sfe_r),\vec b)\),
consider the vector \(\vec a=(a_1,\dots,a_\ell)\) of nonnegative
integers with \(\norm{\vec a}=k\) satisfying that \(a_s\ge 1\) if
\(b_s\ge 1\) and \(a_s=0\) if \(s\neq r-1\) and \(b_s=0\). Then it is
easy to check that \((\sfh^2(\sfe_r),\vec a)\in \HE^2_{\ell,0}(k)\).
Moreover, the number of such vectors \(\vec a\) is the number of
nonnegative integer solutions of \(x_1+\cdots+x_{j+1}=k-j\), which is
exactly \(\binom k j\).

Since the above two constructions are inverses to each other, we
obtain the fourth equality.
\end{proof}

For each \(\ell\ge 1\) and \(j,k\ge 0\), the SHE- and
HE-configurations can be depicted in \Cref{figure:bouquet HE and SHE}.

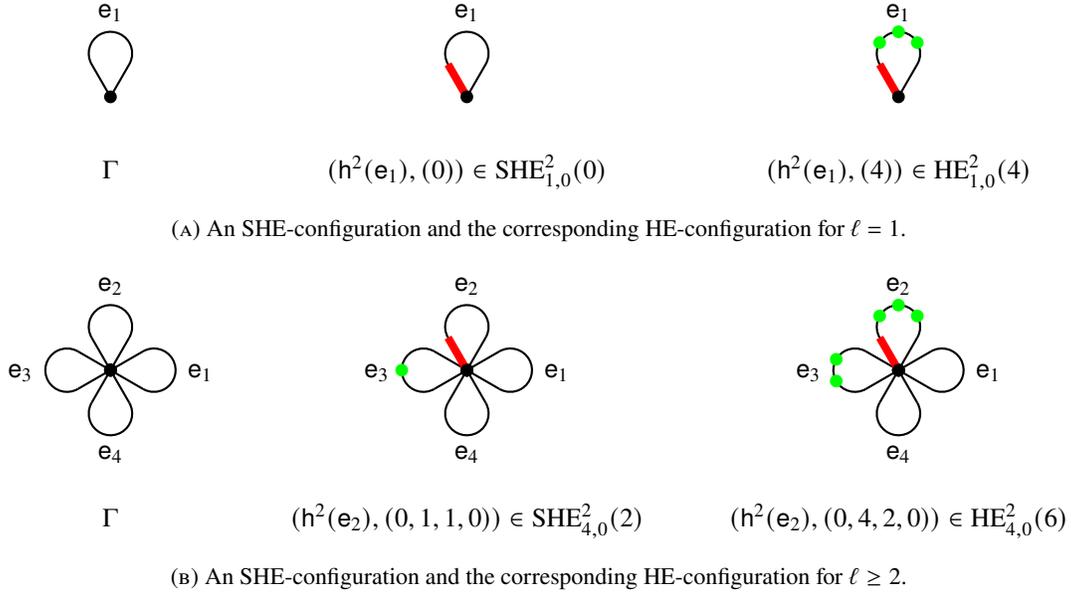
\begin{figure}
\subcaptionbox{An SHE-configuration and the corresponding HE-configuration for \(\ell=1\).}{
\(
\begin{tikzcd}[ampersand replacement=\&, row sep=0pc, column sep=1pc]
\begin{tikzpicture}[baseline=-.5ex]
\useasboundingbox (-1.5,-0.5) rectangle (1.5, 1.5);
\draw[thick, fill] (0,0) circle (2pt) node (A) {};
\draw[thick] (0,0) -- ++(120:0.5) arc (210:-30:{0.5/sqrt(3)}) -- cycle;
\draw (0,0) ++ (0,{1.5/sqrt(3)}) node[above] {\(\sfe_1\)};
\end{tikzpicture}
\&
\begin{tikzpicture}[baseline=-.5ex]
\useasboundingbox (-2.5,-0.5) rectangle (2.5, 1.5);
\draw[thick] (0,0) -- ++(120:0.5) arc (210:-30:{0.5/sqrt(3)}) -- cycle;
\draw[line width=3, red] (0,0) -- ++(120:0.5);
\draw (0,0) ++ (0,{1.5/sqrt(3)}) node[above] {\(\sfe_1\)};
\draw[thick, fill] (0,0) circle (2pt) node (A) {};
\end{tikzpicture}
\&
\begin{tikzpicture}[baseline=-.5ex]
\useasboundingbox (-2.5,-0.5) rectangle (2.5, 1.5);
\draw[thick] (0,0) -- ++(120:0.5) arc (210:-30:{0.5/sqrt(3)}) -- cycle;
\draw[line width=3, red] (0,0) -- ++(120:0.5);
\draw (0,0) ++ (0,{1.5/sqrt(3)}) node[above] {\(\sfe_1\)};
\draw[thick, fill] (0,0) circle (2pt) node (A) {};
\draw[thick, fill, green] (0,0) ++(0,{1/sqrt(3)}) 
+(30:{0.5/sqrt(3)}) circle (2pt)
+(90:{0.5/sqrt(3)}) circle (2pt)
+(150:{0.5/sqrt(3)}) circle (2pt);
\end{tikzpicture}\\
\Gamma \& 
(\sfh^2(\sfe_1),(0))\in \SHE^2_{1,0}(0) \&
(\sfh^2(\sfe_1),(4))\in \HE^2_{1,0}(4)
\end{tikzcd}
\)
}
\subcaptionbox{An SHE-configuration and the corresponding HE-configuration for \(\ell\ge2\).}{
\(
\begin{tikzcd}[ampersand replacement=\&, row sep=0pc, column sep=1pc]
\begin{tikzpicture}[baseline=-.5ex]
\useasboundingbox (-1.5,-1.5) rectangle (1.5, 1.5);
\draw[thick, fill] (0,0) circle (2pt) node (A) {};
\foreach \i in {0,90,180,270} {
\begin{scope}[rotate=\i]
\draw[thick, fill] (0,0) circle(2pt) -- +({30}:0.5) +(0,0) -- +({-30}:0.5);
\draw[thick] (0,0) +({30}:0.5) arc ({120}:{-120}:{0.5 / sqrt(3)});
\end{scope}
}
\draw (0,0) ++ (0:{1.5/sqrt(3)}) node[right] {\(\sfe_1\)};
\draw (0,0) ++ (90:{1.5/sqrt(3)}) node[above] {\(\sfe_2\)};
\draw (0,0) ++ (180:{1.5/sqrt(3)}) node[left] {\(\sfe_3\)};
\draw (0,0) ++ (270:{1.5/sqrt(3)}) node[below] {\(\sfe_4\)};
\end{tikzpicture}
\&
\begin{tikzpicture}[baseline=-.5ex]
\useasboundingbox (-2.5,-1.5) rectangle (2.5, 1.5);
\draw[thick, fill] (0,0) node (A) {};
\foreach \i in {0,90,180,270} {
\begin{scope}[rotate=\i]
\draw[thick, fill] (0,0) circle(2pt) -- +({30}:0.5) +(0,0) -- +({-30}:0.5);
\draw[thick] (0,0) +({30}:0.5) arc ({120}:{-120}:{0.5 / sqrt(3)});
\end{scope}
}
\draw (0,0) ++ (0:{1.5/sqrt(3)}) node[right] {\(\sfe_1\)};
\draw (0,0) ++ (90:{1.5/sqrt(3)}) node[above] {\(\sfe_2\)};
\draw (0,0) ++ (180:{1.5/sqrt(3)}) node[left] {\(\sfe_3\)};
\draw (0,0) ++ (270:{1.5/sqrt(3)}) node[below] {\(\sfe_4\)};
\draw[line width=3, red] (0,0) -- ++(120:0.5);
\draw[thick, fill] (0,0) circle (2pt);
\draw[thick, fill, green] (0,0) +(180:{1.5/sqrt(3)}) circle (2pt);
\end{tikzpicture}
\&
\begin{tikzpicture}[baseline=-.5ex]
\useasboundingbox (-2.5,-1.5) rectangle (2.5, 1.5);
\draw[thick, fill] (0,0) node (A) {};
\foreach \i in {0,90,180,270} {
\begin{scope}[rotate=\i]
\draw[thick, fill] (0,0) circle(2pt) -- +({30}:0.5) +(0,0) -- +({-30}:0.5);
\draw[thick] (0,0) +({30}:0.5) arc ({120}:{-120}:{0.5 / sqrt(3)});
\end{scope}
}
\draw (0,0) ++ (0:{1.5/sqrt(3)}) node[right] {\(\sfe_1\)};
\draw (0,0) ++ (90:{1.5/sqrt(3)}) node[above] {\(\sfe_2\)};
\draw (0,0) ++ (180:{1.5/sqrt(3)}) node[left] {\(\sfe_3\)};
\draw (0,0) ++ (270:{1.5/sqrt(3)}) node[below] {\(\sfe_4\)};
\draw[line width=3, red] (0,0) -- ++(120:0.5);
\draw[thick, fill] (0,0) circle (2pt);
\draw[thick, fill, green] (0,0) ++(0,{1/sqrt(3)}) 
+(30:{0.5/sqrt(3)}) circle (2pt)
+(90:{0.5/sqrt(3)}) circle (2pt)
+(150:{0.5/sqrt(3)}) circle (2pt)
(0,0) ++(180:{1/sqrt(3)}) 
+(150:{0.5/sqrt(3)}) circle (2pt)
+(210:{0.5/sqrt(3)}) circle (2pt);
\end{tikzpicture}\\
\Gamma \&
(\sfh^2(\sfe_2),(0,1,1,0))\in \SHE^2_{4,0}(2) \&
(\sfh^2(\sfe_2),(0,4,2,0))\in \HE^2_{4,0}(6)
\end{tikzcd}
\)
}
\caption{Examples of SHE- and HE-configurations on bouquet of circles.}
\label{figure:bouquet HE and SHE}
\end{figure}

Finally, for each HE-configuration \((\sfh^2(\sfe_i),\vec a)\in \HE^2_{\ell,0}(k)\) on \(\Gamma_{\ell,0}\), there is a natural way to assign a stabilized loop class as seen earlier in \Cref{figure:HE and classes}.

As before, we denote the sets of homology cycles corresponding to HE-configurations \(\HE^\epsilon_{\ell,0}(k)\) of type 1 and 2 by \(S^1_{\ell,0}(k)\) and \(L^1_{\ell,0}(k)\), respectively. Then we have the following proposition, which is an analog of \Cref{prop:star and loop}, and we omit the proof.

\begin{prop}
Let \(\ell\ge1\). For each \(k\ge 0\), the union \(S^1_{\ell,0}(k)\sqcup L^1_{\ell,0}(k)\) forms an \(\bbF\)-basis for \(H_1(B_k\Gamma_{\ell,0})\).
\end{prop}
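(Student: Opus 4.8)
The plan is to reproduce the counting-squeeze argument used in the proof of \Cref{prop:star and loop}. For a fixed $k$ I want to build the chain
\begin{align*}
\dim_\bbF H_1(B_k\Gamma_{\ell,0})
&=\dim_\bbF \bbF\langle S^1_{\ell,0}(k)\sqcup L^1_{\ell,0}(k)\rangle\\
&\le |S^1_{\ell,0}(k)\sqcup L^1_{\ell,0}(k)|\\
&=|\HE_{\ell,0}(k)|
=\dim_\bbF H_1(B_k\Gamma_{\ell,0}),
\end{align*}
so that every inequality is forced to be an equality, yielding at once that $S^1_{\ell,0}(k)\sqcup L^1_{\ell,0}(k)$ both spans and is linearly independent. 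Four ingredients feed the chain: (i) a spanning statement, the analog of \Cref{lem:standard star}, giving the first equality; (ii) bijections $S^1_{\ell,0}(k)\cong \HE^1_{\ell,0}(k)$ and $L^1_{\ell,0}(k)\cong\HE^2_{\ell,0}(k)$, the analog of \Cref{lem:one-to-one}, giving the third equality; (iii) the combinatorial evaluation $|\HE_{\ell,0}(k)|=N_{\ell,0}(k)$; and (iv) the Betti-number identity $\dim_\bbF H_1(B_k\Gamma_{\ell,0})=N_{\ell,0}(k)$. I would treat $\ell\ge 2$ and $\ell=1$ separately, since the central vertex of $\Gamma_{\ell,0}$ is essential only when $\ell\ge 2$.

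For $\ell\ge 2$, ingredient (iv) is immediate from \Cref{prop:elementary}: here $\ell+m=\ell\ge 2$ places us in the case $P^1_{\Gamma_{\ell,0}}(k)=N_{\ell,0}(k)$ with vanishing residual. Ingredient (iii) follows from the preceding lemma together with the Vandermonde identity $\sum_{j}\binom{a}{j}\binom{b}{c-j}=\binom{a+b}{c}$: summing $|\SHE^1_{\ell,0}(j)|\binom kj$ gives $(\ell-2)\binom{k+\ell-2}{\ell-1}-\binom{k+\ell-2}{\ell-2}+1$ and summing $|\SHE^2_{\ell,0}(j)|\binom kj$ gives $\ell\binom{k+\ell-2}{\ell-1}$, whose sum is exactly $N_{\ell,0}(k)=(2\ell-2)\binom{k+\ell-2}{\ell-1}-\binom{k+\ell-2}{\ell-2}+1$. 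Ingredient (ii) is obtained as in \Cref{lem:one-to-one}, assigning to a standard star class its three marked half edges and stabilization monomial, and to a loop class its marked loop and stabilization monomial; the only change on the type-$2$ side is the cyclic normalization $b_{r-1}=0$ built into the modified definition of $\SHE^2_{\ell,0}(j)$ in the appendix.

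The hard part will be ingredient (i), the spanning lemma. The proof of \Cref{lem:standard star} crucially designates a \emph{non-loop} pivot edge $\sfe_1$ and uses it, via the relations \eqref{eq:X_1}, \eqref{eq:X_2}, and \eqref{eq:Q}, to (a) reduce to star classes containing $\sfe_1$, (b) clear stabilizations strictly between the two outer half edges, and (c) eliminate second half edges. For the bouquet $\Gamma_{\ell,0}$ no non-loop edge exists, so the pivot must be chosen among the loops and the reduction must respect the cyclic labeling of half edges around $\sfv$; this is precisely why the type-$2$ normal form was redefined with $b_{r-1}=0$ rather than $b_1=0$. I would re-run the three reduction steps with a fixed loop $\sfe_1$ as pivot, checking that each application of \eqref{eq:X_1}--\eqref{eq:Q} still embeds into $\Gamma_{\ell,0}$ and that the resulting normal form is exactly the one enumerated by $\SHE^1_{\ell,0}\sqcup\SHE^2_{\ell,0}$. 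I expect this to be the only place requiring genuine care rather than bookkeeping.

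Finally, for $\ell=1$ the graph $\Gamma_{1,0}$ is homeomorphic to a circle and carries no essential vertex, so there are no star classes and the squeeze must be bypassed. Here I would argue directly: $B_k\Gamma_{1,0}$ is homotopy equivalent to a circle for $k\ge 1$, so $H_1$ is one-dimensional, generated by the single stabilized loop class, while $H_1(B_0\Gamma_{1,0})=0$ has empty basis. This matches $S^1_{1,0}(k)\sqcup L^1_{1,0}(k)$, with the one caveat that the purely combinatorial count $|\HE_{1,0}(0)|=1$ overcounts the $k=0$ homology by one; the size-$0$ type-$2$ configuration does not produce an actual loop class, since a loop class requires at least one particle, so it is excluded from $L^1_{1,0}(0)$.
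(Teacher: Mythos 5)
Your proposal follows exactly the route the paper intends: the paper omits this proof, saying only that the proposition is the analog of \Cref{prop:star and loop}, and your squeeze argument (spanning via the bouquet analog of \Cref{lem:standard star}, the bijection with HE-configurations as in \Cref{lem:one-to-one}, and the count \( |\HE_{\ell,0}(k)| = N_{\ell,0}(k) = \dim_\bbF H_1(B_k\Gamma_{\ell,0}) \), which you verify correctly via Vandermonde) is precisely that analog, including the sensible separate treatment of \( \ell=1 \) and the \( k=0 \) degeneracy there. The one caveat is that you sketch rather than execute the single genuinely new step --- rerunning the reductions of \Cref{lem:standard star} with a loop as pivot to arrive at the cyclically normalized form \( b_{r-1}=0 \) --- but since the paper supplies no proof at all here, this matches its level of detail, and your identification of where the real work lies is accurate.
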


\bibliographystyle{alpha}

\begin{thebibliography}{ADCK22}

\bibitem[Abr00]{Abrams2000}
Aaron~David Abrams.
\newblock {\em Configuration spaces and braid groups of graphs}.
\newblock ProQuest LLC, Ann Arbor, MI, 2000.
\newblock Thesis (Ph.D.)--University of California, Berkeley.

\bibitem[ADCK19]{ADCK2019}
Byung~Hee An, Gabriel~C. Drummond-Cole, and Ben Knudsen.
\newblock Subdivisional spaces and graph braid groups.
\newblock {\em Doc. Math.}, 24:1513--1583, 2019.

\bibitem[ADCK20]{ADCK2020}
Byung~Hee An, Gabriel~C. Drummond-Cole, and Ben Knudsen.
\newblock Edge stabilization in the homology of graph braid groups.
\newblock {\em Geom. Topol.}, 24(1):421--469, 2020.

\bibitem[ADCK22]{ADCK2022}
Byung~Hee An, Gabriel~C. Drummond-Cole, and Ben Knudsen.
\newblock Asymptotic homology of graph braid groups.
\newblock {\em Geom. Topol.}, 26(4):1745--1771, 2022.

\bibitem[AK22]{AK2022}
Byung~Hee An and Ben Knudsen.
\newblock On the second homology of planar graph braid groups.
\newblock {\em J. Topol.}, 15(2):666--691, 2022.

\bibitem[DC20]{GCDC2019}
Gabriel~C. Drummond-Cole.
\newblock Betti numbers of unordered configuration spaces of small graphs.
\newblock arXiv:1906.00692, 2020.

\bibitem[FS05]{FS2005}
Daniel Farley and Lucas Sabalka.
\newblock Discrete {M}orse theory and graph braid groups.
\newblock {\em Algebr. Geom. Topol.}, 5:1075--1109, 2005.

\bibitem[FS08]{FS2008}
Daniel Farley and Lucas Sabalka.
\newblock On the cohomology rings of tree braid groups.
\newblock {\em J. Pure Appl. Algebra}, 212(1):53--71, 2008.

\bibitem[Ghr01]{Ghrist2001}
Robert Ghrist.
\newblock Configuration spaces and braid groups on graphs in robotics.
\newblock In {\em Knots, braids, and mapping class groups---papers dedicated to
  {J}oan {S}. {B}irman ({N}ew {Y}ork, 1998)}, volume~24 of {\em AMS/IP Stud.
  Adv. Math.}, pages 29--40. Amer. Math. Soc., Providence, RI, 2001.

\bibitem[KKP12]{KKP2012}
Jee~Hyoun Kim, Ki~Hyoung Ko, and Hyo~Won Park.
\newblock Graph braid groups and right-angled {A}rtin groups.
\newblock {\em Trans. Amer. Math. Soc.}, 364(1):309--360, 2012.

\bibitem[KLP16]{KLP2016}
Ki~Hyoung Ko, Joon~Hyun La, and Hyo~Won Park.
\newblock Graph 4-braid groups and {M}assey products.
\newblock {\em Topology Appl.}, 197:133--153, 2016.

\bibitem[KP12]{KP2012}
Ki~Hyoung Ko and Hyo~Won Park.
\newblock Characteristics of graph braid groups.
\newblock {\em Discrete Comput. Geom.}, 48(4):915--963, 2012.

\bibitem[Sab09]{Sabalka2009}
Lucas Sabalka.
\newblock On rigidity and the isomorphism problem for tree braid groups.
\newblock {\em Groups Geom. Dyn.}, 3(3):469--523, 2009.

\end{thebibliography}

\end{document}